\pgfplotsset{compat=1.18} 
\definecolor{airforceblue}{rgb}{0.36, 0.54, 0.66}
\definecolor{antiquebrass}{rgb}{0.8, 0.58, 0.46}
\definecolor{applegreen}{rgb}{0.55, 0.71, 0.0}
\definecolor{amber(sae/ece)}{rgb}{1.0, 0.49, 0.0}
\definecolor{arsenic}{rgb}{0.23, 0.27, 0.29}
\definecolor{armygreen}{rgb}{0.29, 0.33, 0.13}
\definecolor{ashgray}{rgb}{0.7, 0.75, 0.71}
\definecolor{auburn}{rgb}{0.43, 0.21, 0.1}
\definecolor{aurometalsaurus}{rgb}{0.43, 0.5, 0.5}
\definecolor{babyblue}{rgb}{0.54, 0.81, 0.94}
\definecolor{azure(web)(azuremist)}{rgb}{0.94, 1.0, 1.0}
\definecolor{babyblue}{rgb}{0.54, 0.81, 0.94}
\definecolor{bole}{rgb}{0.47, 0.27, 0.23}
\definecolor{brightgreen}{rgb}{0.4, 1.0, 0.0}
\definecolor{britishracinggreen}{rgb}{0.0, 0.26, 0.15}
\definecolor{bulgarianrose}{rgb}{0.28, 0.02, 0.03}
\definecolor{cadmiumyellow}{rgb}{1.0, 0.96, 0.0}
\definecolor{carolinablue}{rgb}{0.6, 0.73, 0.89}
\definecolor{chartreuse(traditional)}{rgb}{0.87, 1.0, 0.0}
\definecolor{chartreuse(web)}{rgb}{0.5, 1.0, 0.0}
\definecolor{cinereous}{rgb}{0.6, 0.51, 0.48}
\definecolor{coolblack}{rgb}{0.0, 0.18, 0.39}
\definecolor{deepskyblue}{rgb}{0.0, 0.75, 1.0}
\definecolor{darkpastelblue}{rgb}{0.47, 0.62, 0.8}
\definecolor{ceil}{rgb}{0.57, 0.63, 0.81}
\definecolor{cadetgray}{rgb}{0.57, 0.64, 0.69}
\definecolor{bluedefrance}{rgb}{0.19, 0.55, 0.91}
\newcommand{\fixskip}{\medskip}
\long\def\notforarxiv#1\endnotforarxiv{}
\newcommand{\bluenew}[1]{
\begingroup #1\endgroup}
\newcommand*{\Scale}[2][4]{\scalebox{#1}{$#2$}}%
\newcommand{\hiddencomment}[1]{}
\theoremstyle{thmstyleone}%
\newtheorem{theorem}{Theorem}%
\newtheorem{corollary}{Corollary}%
\newtheorem{lemma}{Lemma}
\newtheorem{prop}{Proposition}%
\newtheorem{conjecture}{Conjecture}%
\theoremstyle{definition}
\newtheorem{example}{Example}%
\newtheorem{remark}{Remark}%
\newtheorem{problem}{Problem}%
\newtheorem{definition}{Definition}%
\begin{document}

\title[Flexible polyhedral 
nets in isotropic geometry]{Flexible polyhedral 
nets in isotropic geometry}


\author[1]{\fnm{Olimjoni} \sur{Pirahmad}}\email{pirahmad.olimjoni@kaust.edu.sa}
\equalcont{These authors contributed equally to this work.}


\author[1,2]{\fnm{Helmut} \sur{Pottmann}}\email{helmut.pottmann@gmail.com}
\equalcont{These authors contributed equally to this work.}

\author[1]{\fnm{Mikhail} \sur{Skopenkov}}\email{mikhail.skopenkov@gmail.com}
\equalcont{These authors contributed equally to this work.}

\affil[1]{\orgdiv{CEMSE}, \orgname{King Abdullah University of Science and Technology}, \orgaddress{
\city{Thuwal}, \postcode{23955-6900}, \country{Saudi Arabia}}}

\affil[2]{\orgdiv{Institute of Discrete Mathematics and Geometry}, \orgname{TU Wien}, \orgaddress{\street{Wiedner Hauptstr. 8-10},  \city{Wien}, \postcode{1040}, \country{Austria}}}



\abstract{We study flexible polyhedral nets in isotropic geometry. This geometry has a degenerate metric, but there is a natural notion of flexibility. We study infinitesimal and finite flexibility, and classify all finitely flexible polyhedral nets of arbitrary size. We show that there are just two classes, in contrast to Izmestiev's rather involved classification in Euclidean geometry, for size $3\times3$ only. Using these nets to initialize the optimization algorithms, we turn them into approximate Euclidean mechanisms. We also explore the smooth versions of these classes.
%
}



\pacs[Mathematics Subject Classification]{52C25,53A35,53A70,53A05}

\keywords{Flexible nets, Isotropic geometry, Area preserving Combescure transformation, 
Generalized T-nets, Cone-nets}




\maketitle





\section{Introduction}\label{sec-introduction}

The present paper has been motivated by recent work on
flexible polyhedral surfaces, in particular, quad nets with planar 
faces (called Q-nets) that act as mechanisms; see Figure~\ref{fig:SOM-exhib}. One considers the faces as rigid bodies
and the edges as hinges, and then they allow for continuous
flexion. Probably the most important recent progress is due to
Izmestiev  \cite{izmestiev2017classification}, who classified all flexible
$3 \times 3$ Q-nets and showed more than 20 classes in his classification. This is an essential task since, due to a 
result by Schief et al.~\cite{Schief2008}, an $m \times n$ 
Q-net is (two-sided) flexible if and only if all its $3 \times 3$ sub-nets are. Until 2020, the only
  known types of flexible
$m \times n$ Q-nets ($m$ or $n$ greater than $3$)
have been the classical special examples of Voss nets and T-nets \cite{sauer:1970}. A contribution
by He and Guest \cite{he2020rigid} changed that: these authors have been able to patch $3 \times 3$ nets together
to larger flexible Q-nets. This is still no method for approximation or design of such flexible
structures since the result is hardly predictable by the provided generation process. Very recently, Nawratil \cite{Nawratil2024}
introduced another class of flexible $m \times n$ Q-nets, however, without a geometric characterization that would enable design. Jiang et al.~\cite{quadmech-2024} base their design of flexible Q-nets on numerical optimization, the most difficult problem being initial guesses. 
The present paper contributes
to that part; see Figure~\ref{fig:flexible-eucl}.

\begin{figure}[bthp]
    \centering
    \includegraphics[scale=1]{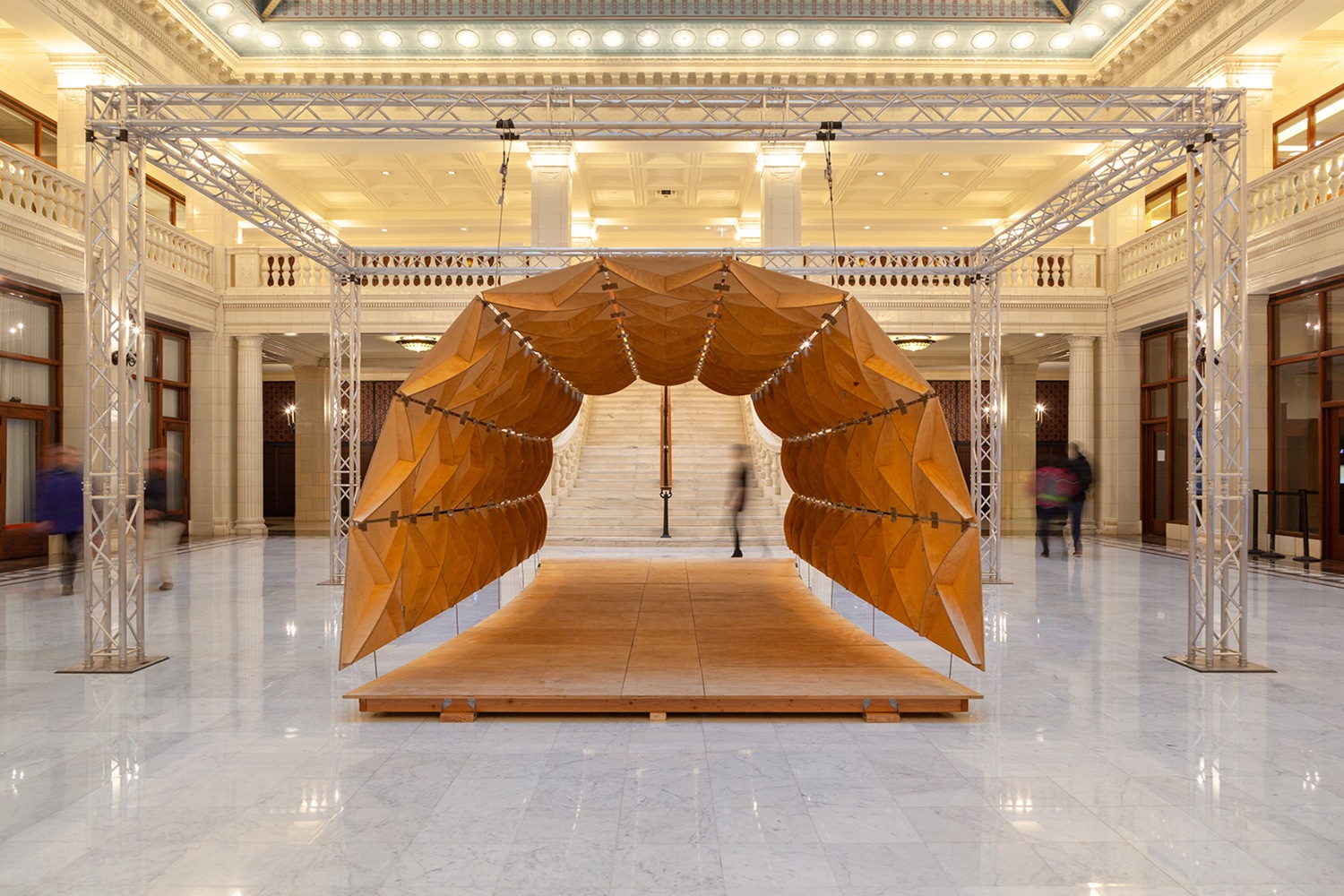}\quad\includegraphics[scale=0.12]{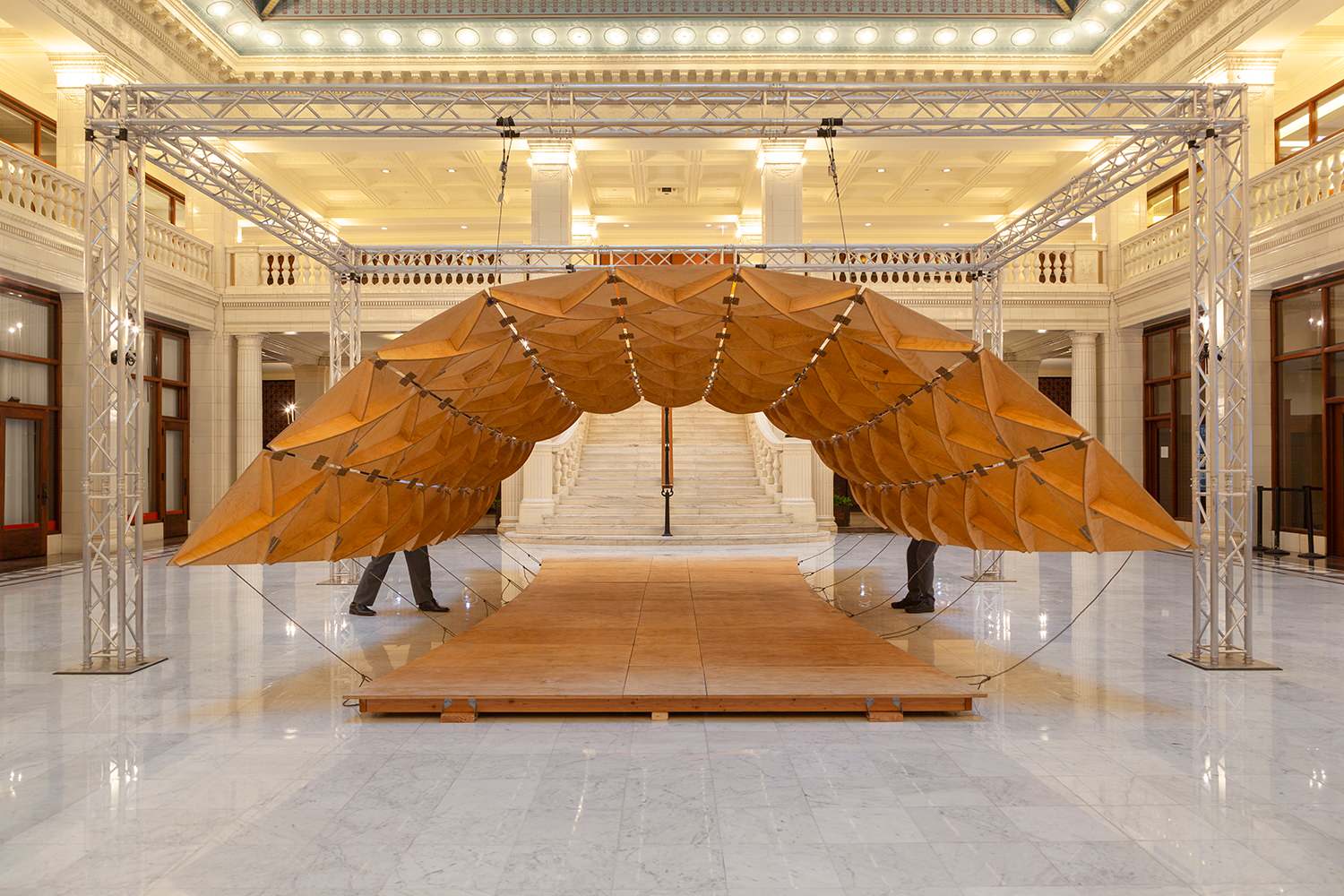}
    \caption{Kinematic sculpture in form of a Euclidean flexible Q-net of Voss type at the 2018 Chicago design week (\copyright\ Skidmore, Owings \& Merrill). 
}
    \label{fig:SOM-exhib}
\end{figure}

\begin{figure}[tbhp]
    \centering
    \includegraphics[scale=0.18]{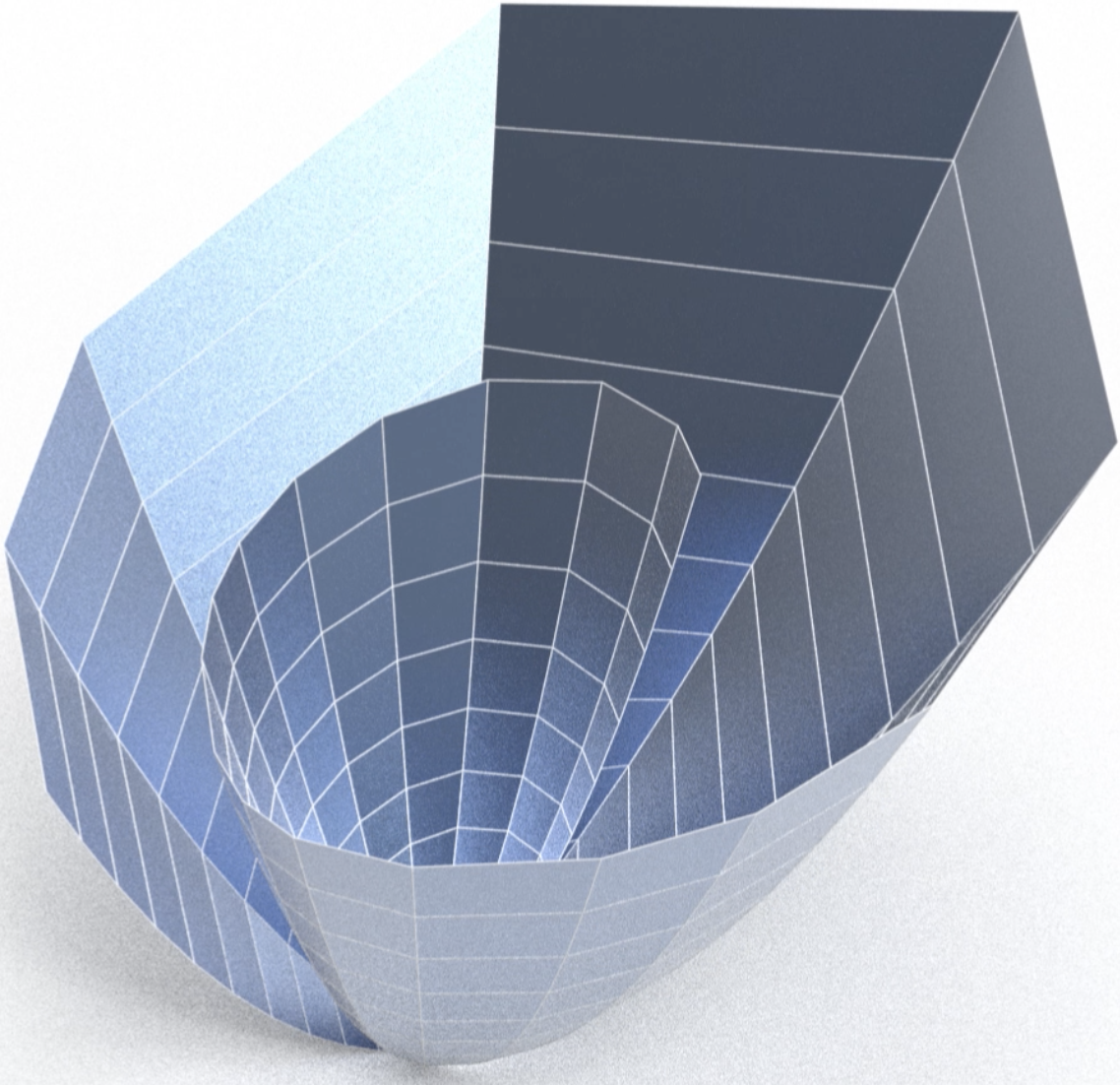}\qquad \includegraphics[scale=0.18]{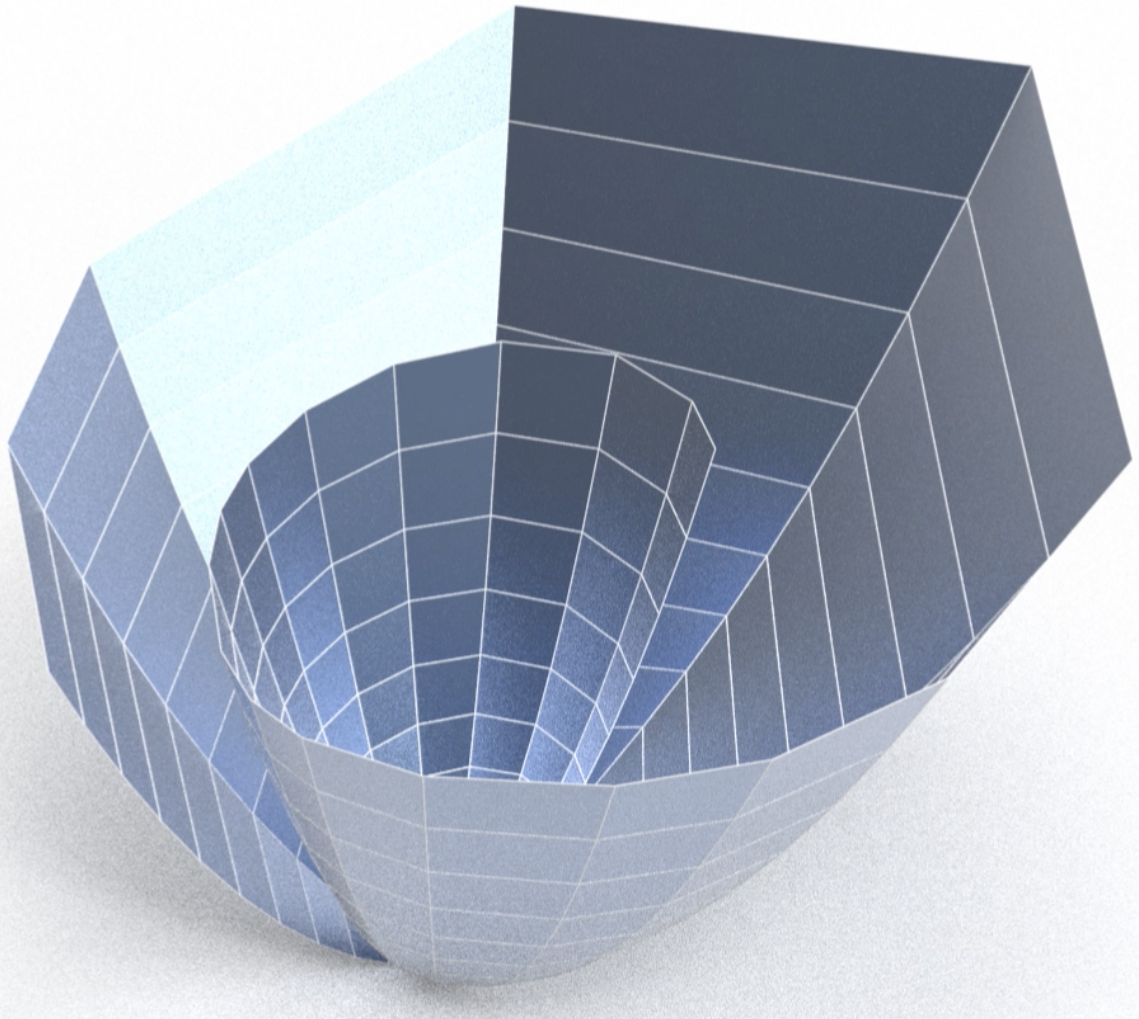}\qquad \includegraphics[scale=0.18]{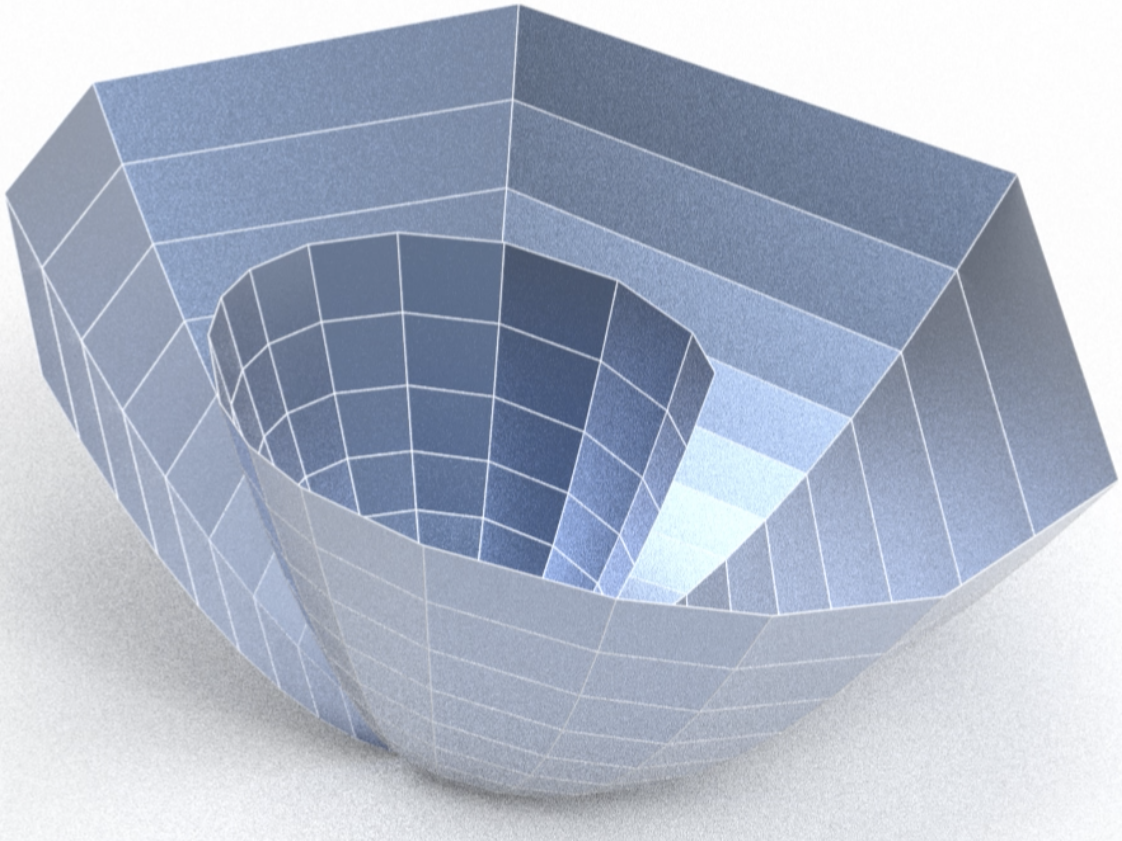}\\[0.2cm]
    \includegraphics[scale=0.2]{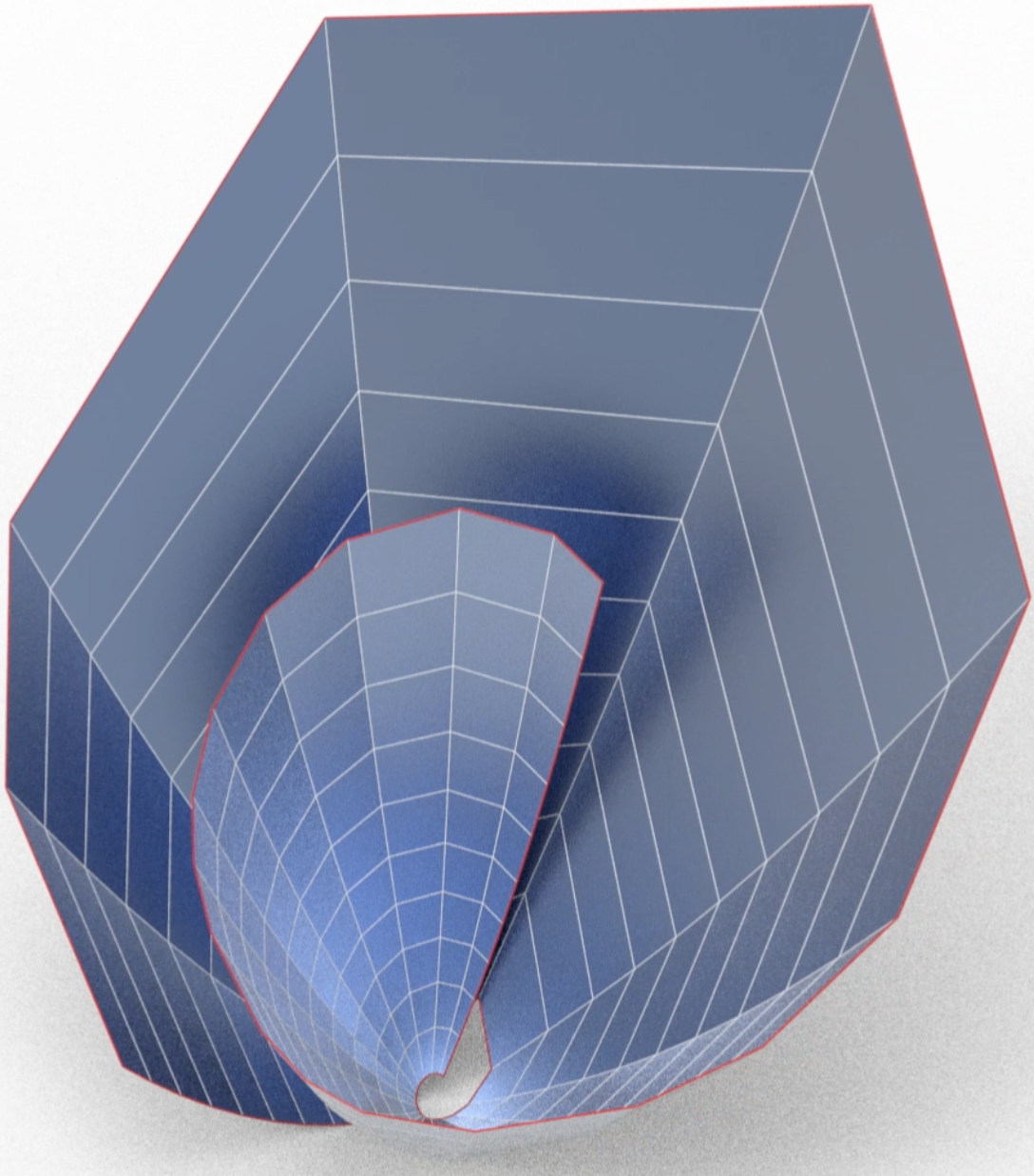}\qquad \includegraphics[scale=0.19]{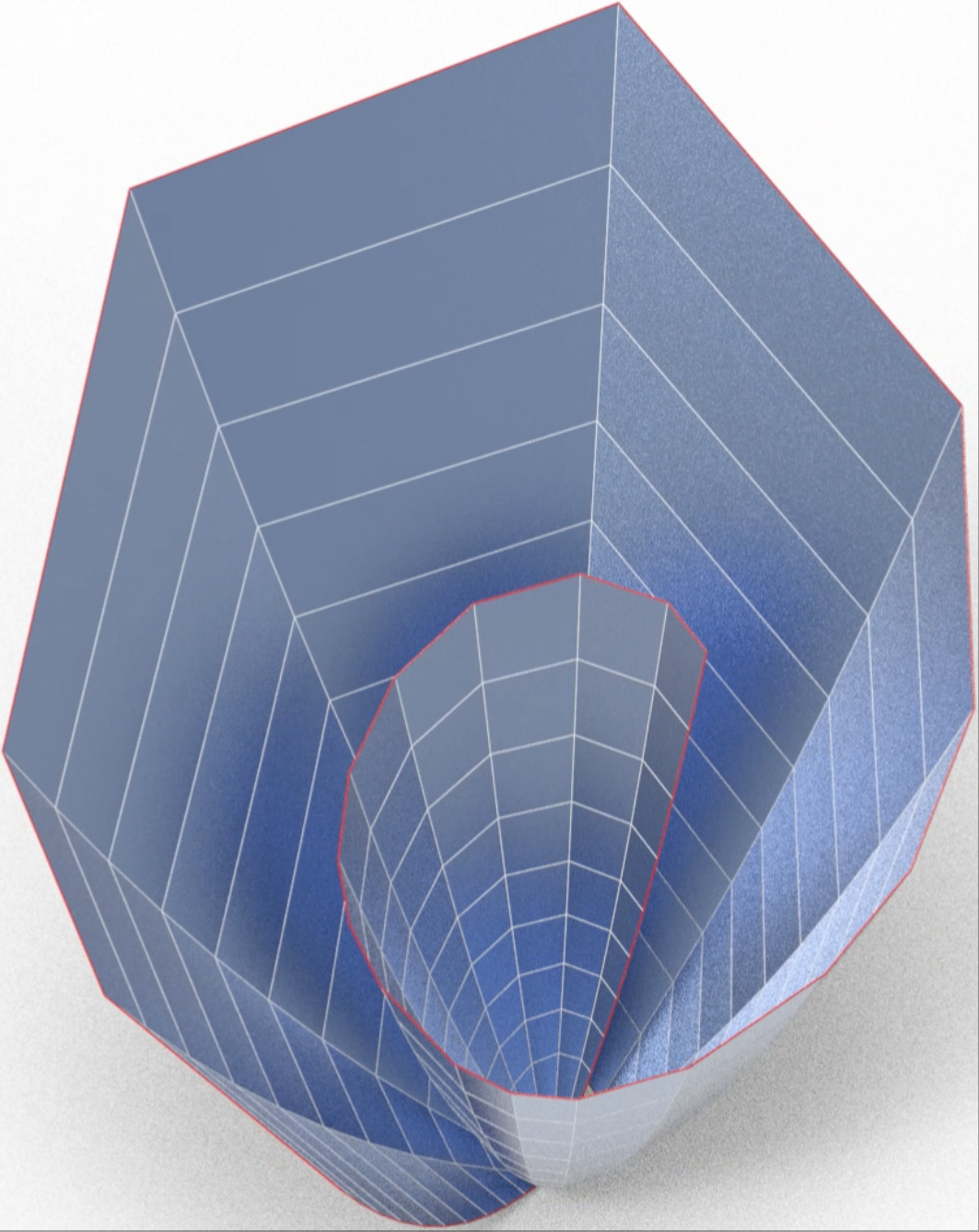}\qquad \includegraphics[scale=0.17]{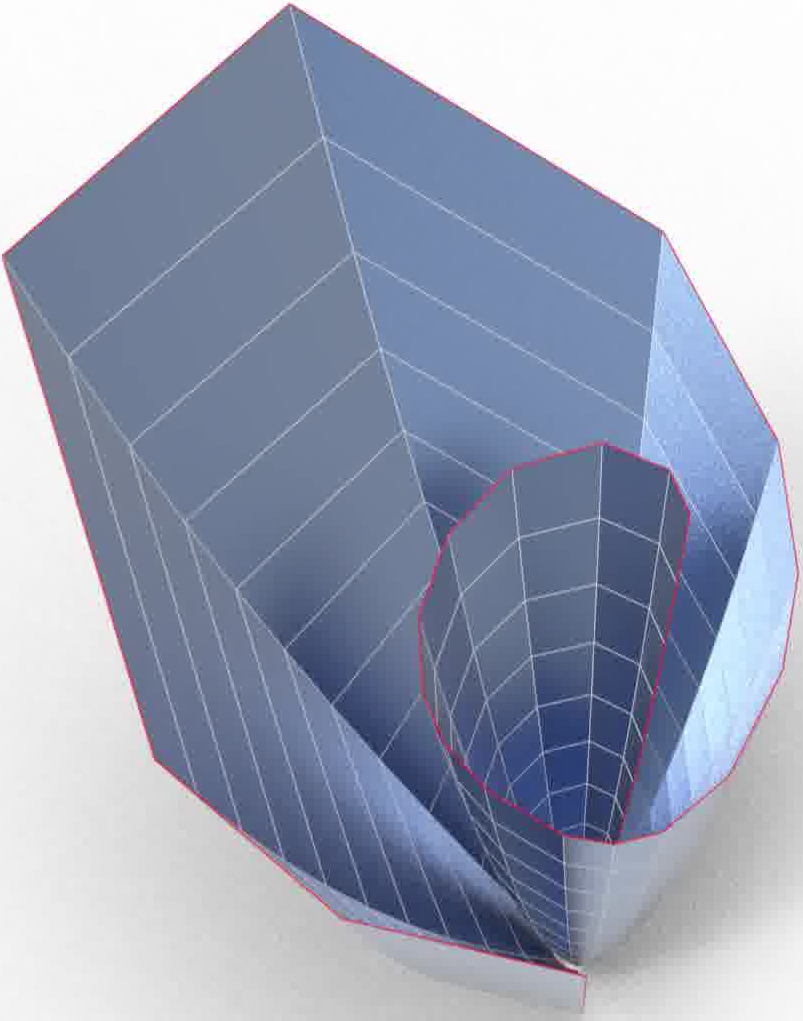}
    \caption{
    A few positions of an isotropic mechanism (upper row) and a Euclidean mechanism obtained by optimization (lower row) called ``flower''. The optimization leads to a very small change so that initial positions are very close. Images courtesy Caigui Jiang. }
    \label{fig:flexible-eucl}
\end{figure}

Flexible quad nets with not necessarily planar faces have also begun to emerge, as in the work by Nawratil \cite{Nawratil2022}
and Aikyn et al.~\cite{aikyn2024flexible}. 
The latter concerns a generalization of Izmestiev's orthodiagonal involutive type of Kokotsakis meshes, which holds promise to be
applicable to generalizations of other Izmestiev types as well.

To gain a better understanding, we therefore turned to a strategy
that has been proven effective in other problems, namely, to first study
the counterpart in the so-called \emph{isotropic geometry}.  This
simple non-Euclidean geometry has a degenerate metric based on
the semi-norm $\|(x,y,z)\|_i=\sqrt{x^2+y^2}$. One can come
up with a 6-dimensional group of isotropic rigid body motions 
and then develop a rich theory around that 
(see \cite{strubecker1,strubecker2,strubecker3,strubecker4,Sachs:1990,petrov-tikhomirov-06}). Isotropic geometry can be viewed as a structure-preserving simplification of Euclidean geometry and has 
applications 
within Euclidean geometry as well
(see, e.g., \cite{BMw-2024, Millar2022, pellis-smooth-2019, pottmann-2009-lms, Skopenkov-etal-12, Skopenkov-etal-20}).

However, until now, there has been no study of isometric deformations
of surfaces in isotropic 3-space $I^3$, apparently due to the degeneracy at first sight. Looking 
just at the metric is not enough. However, when adding the additional
requirement that isometric surfaces must have the same isotropic
Gaussian curvature at corresponding points, one can develop a theory
that does not degenerate. On the contrary, it provides additional
new insight into concepts and relations in statics and discrete
differential geometry \cite{isometric-isotropic}.

If we apply this recent concept of isometry, one can formulate 
a meaningful concept of flexible polyhedral nets in $I^3$. The problem becomes
even easier to deal with when applying the well-known metric
duality of isotropic 3-space. One arrives at \emph{area-preserving 
continuous Combescure transformations of Q-nets}~\cite{pirahmad2024area}. Two Q-nets are
\emph{Combescure transforms} of each other if they possess parallel corresponding
edges. This concept is even meaningful in the plane, and it belongs
to affine geometry. A Q-net that allows for a continuous family of area-preserving continuous Combescure transformations is briefly called \emph{deformable}.

Area preserving continuous Combescure transformations are of interest
without the relation to mechanisms in $I^3$ since Combescure transformations
are an essential concept in discrete differential geometry \cite{bobenko-2008-ddg}. Q-nets which allow for infinitesimal area-preserving Combescure
transformations turn out to be exactly the well-studied K{\oe}nigs nets,
and the velocity diagram of such an infinitesimal transformation is 
precisely the so-called Christoffel dual net. In 3-space, these nets appear as
relative minimal surfaces. Elaborating on the special cases that arise
when asking for continuous area-preserving Combescure transformations
is thus an interesting topic on its own.

Fortunately, in prior work~\cite{pirahmad2024area} we have
been able to come up with a complete classification of all deformable
nets in the above sense. The approach uses elementary geometry and
elementary methods of algebraic geometry. They fall into two classes. Class (i) is composed of two interleaved cone-cylinder nets, a particular case of cone-nets studied in \cite{kilian2023smooth}. Class (ii) is characterized by the existence of a Christoffel dual with equal areas of corresponding faces. This implies in general a visually non-smooth behavior. As a result of that, the smooth analogs of (ii) are just translational nets, and a special
case of the smooth analogs of type (i), which are smooth cone-cylinder nets, also known as scale-translational surfaces.

While the Euclidean classification of flexible Q-nets has so far only been done for nets with $3 \times 3$ faces and led to a large number of classes \cite{izmestiev2017classification},
the isotropic classification, which we give in the present work, is for arbitrary $m \times n$ nets and has only two classes: (i) and (ii). For class~(i), on each parameter line of one family, any two adjacent
non-boundary vertices are dual-affine-symmetric. 
Class (ii) includes nets with equal opposite ratios of neighboring non-boundary vertices. 

In addition to getting this classification, we characterize infinitesimal isotropic flexibility and study a particularly interesting sub-class of class (i), called generalized T-nets. Their smooth analogs 
are so-called generalized smooth T-nets. Generalized T-nets, both discrete and smooth, feature planar parameter lines, with one family of parameter lines lying in isotropic planes.

\textbf{Organization of the paper.} In Section~\ref{sec:final}, we provide an overview of the necessary concepts in isotropic geometry relevant to this paper and introduce the notion of dual-convex $m\times n$ nets. In Sections~\ref{sec-infinitesimal}--\ref{sec-smooth}, we study the infinitesimal flexibility of $m\times n$ nets, their finite flexibility, and the finite flexibility of smooth nets, respectively. These sections are independent, so the reader can directly proceed to the section of most interest (in a few cases, when one section uses a notion or a result from the other, we give a reference). Section~\ref{sec:concl-open-pr} presents the conclusion and proposes several open problems. 

\section{Preliminaries} \label{sec:final}


\subsection{Isotropic geometry} 
\label{ssec:isotropic}


Let $I^3$ be the $3$-dimensional real affine space (with a basis $e_1,e_2,e_3$) equipped with the \emph{isotropic semi-norm} $\|(x,y,z)\|_i:=\sqrt{x^2+y^2}$.
Vectors, lines, and planes parallel to the $z$-axis are called \emph{isotropic}. 
Isotropic vectors have vanishing isotropic semi-norm, thus we introduce the \emph{replacing semi-norm} 
$\|(0,0,z)\|_r:=|z|$ to distinguish them.
An affine transformation 
that preserves the isotropic semi-norm, the replacing semi-norm of isotropic vectors, and the orientation in the projection to the plane $z=0$ has the form
\begin{equation*}
  \mathbf{x} \mapsto A\cdot \mathbf{x}+\mathbf{b}, \quad 
A=  \begin{pmatrix} \cos\phi & -\sin\phi & 0  \\
 \sin\phi & \cos\phi & 0  \\
 c_1 & c_2 & 1 \end{pmatrix} 
\end{equation*}
for some values of the parameters $\mathbf{b}\in\mathbb{R}^3$ and $\phi,c_1,c_2\in\mathbb{R}$. Such transformations form the 6-parametric group $G^6$ of \emph{isotropic congruences}. 

These transformations appear as Euclidean congruences in the projection onto the plane $z=0$, which we call \emph{top view}. The top view of a point $P$ 
is denoted by $\overline P$. 
The \emph{isotropic distance} between two points and the \emph{isotropic angle} between two non-isotropic lines or vectors are expressed through the isotropic semi-norm in the usual way and appear in the top view as Euclidean distances and angles. 
Similarly, the \emph{oriented isotropic area} of a polygon is the oriented area of the top view. 

Points $P$ and $Q$ with the same top view are called \emph{parallel}. The isotropic distance $\|\overrightarrow{PQ}\|_i$ between them vanishes. One defines the \emph{replacing distance} between them to be the replacing norm $\|\overrightarrow{PQ}\|_r$.

The isotropic angle $\angle(p,q)$ between two non-isotropic 
planes $p,q$ is the absolute value of the difference of the slopes of the two planes, where the \emph{slope} is the tangent of the Euclidean angle between a given plane and the $xy$-plane. 
By definition, an isotropic plane (or line) is \emph{orthogonal} to any non-isotropic plane (or line).

In contrast to Euclidean geometry, isotropic geometry possesses a \emph{metric duality}. It is defined as the polarity with respect to the \emph{unit isotropic sphere} (paraboloid of revolution) $2z = x^2 + y^2$, which maps a point $P=(P^1,P^2,P^3)\in I^3$ to the plane $P^*$ with the equation $z = P^1x+P^2y-P^3$, and vice versa. 
The isotropic angle between two non-isotropic planes equals the isotropic distance between the points metric dual to them. 
The metric dual of a non-isotropic line $PQ$ is the intersection line of the non-isotropic planes $P^*$ and $Q^*$, and thus also a 
 non-isotropic line.

Isotropic lines and planes play a special role and are usually excluded as tangent spaces in differential geometry. A point of a surface is \emph{admissible} if the tangent plane at the point is non-isotropic, and a surface is \emph{admissible} if it has only admissible points. Hereafter by a \emph{surface} we mean the image of a proper injective $C^3$ map of a closed planar domain into $I^3$ with nondegenerate differential at each point.
An admissible surface can be locally represented as the graph of a function
$ z=z(x,y). $  The \emph{metric dual} of a surface is the set of points metric dual to the tangent planes of the surface.

The \emph{isotropic Gaussian map} takes a point of the surface to the point of the isotropic unit sphere such that the tangent planes at the two points are parallel. If this map is injective, then the oriented isotropic area of the image is called the \emph{total isotropic Gaussian curvature} of the surface. It equals the oriented isotropic area of the metric dual surface.
It also equals $\int K\, dxdy$, where $K:=z_{xx}z_{yy}-z_{xy}^2$ is the \emph{isotropic Gaussian curvature} of the surface $z=z(x,y)$.
This follows from the fact that $K$ measures the distortion in the local area of the isotropic Gauss map and that the top views of the isotropic Gauss image and the metric dual of a surface agree. 

\subsection{Dual-convex $m\times n$ nets}
\label{Subsection-dual-convex}

Now we discuss nets in $I^3$. To make the exposition more visual and less technical, we impose minor convexity restrictions. The first one is the convexity of faces, imposed in the definition of an $m\times n$ net below. The second one is equivalent to the convexity of faces of the metric dual; it is imposed in the definition of a dual-convex net.

By an $m\times n$ \emph{net} (\emph{with convex faces}) we mean a collection of $(m+1)(n+1)$ points $F_{ij}\in I^3$ indexed by two integers $0\le i\le m$ and $0\le j\le n$ (i.e., a map $\{0,\dots,m\}\times \{0,\dots,n\}\to I^3$), such that $F_{ij},F_{i+1,j},F_{i+1,j+1},F_{i,j+1}$ are consecutive vertices of a convex quadrilateral for all $0\le i< m$ and $0\le j< n$ (here $m,n\ge 1$). See Figure~\ref{figure:lemma-eqqq}.
The latter quadrilaterals are denoted by $p_{ij}$ and called \emph{faces}, and their sides are called \emph{edges}. 
Faces and edges are \emph{labeled}, 
i.e., equipped with an assignment of indices to their vertices (see~\cite[\S2]{Mor20} for a detailed discussion of labeled polygons). Points $F_{ij}$ are also called \emph{vertices} of the $m\times n$ net. 
A vertex $F_{ij}$ with $i\in\{0,m\}$ or $j\in\{0,n\}$ is a \emph{boundary} vertex. The faces $p_{i-1,j-1},p_{i,j-1},p_{ij},p_{i-1,j}$ are called the \emph{four consecutive faces around a non-boundary vertex $F_{ij}$}. See Figure~\ref{figure:lemma-eqqq} for $i=j=1$.
The plane of the face $p_{ij}$ is still denoted by $p_{ij}$. Two families of broken lines $F_{i0}\dots F_{in}$ for $0\le i\le m$ and $F_{0j}\dots F_{mj}$ for $0\le j\le n$ are called \emph{families of parameter lines}.

An $m\times n$ net has $mn$ faces, thus the~name.
In what follows, the phrase ``an $m\times n$ net consisting of points $F_{ij}$ indexed by two integers $0\le i\le m$ and $0\le j\le n$ 
and having the faces $p_{kl}$ indexed by $0\le k< m$ and $0\le l< n$'' is abbreviated to ``an $m\times n$ net $F_{ij}$'' or ``an $m\times n$ net with faces $p_{kl}$''.

\begin{figure}[htbp]
  \centering
\begin{tikzpicture}[scale=0.12]

\coordinate (A1) at (11.7, -16);
\coordinate (O) at (8.2, -24.8);
\coordinate (A2) at (25, -21.9);
\coordinate (B2) at (22.1, -36.3);
\coordinate (A3) at (12.3, -34.3);
\coordinate (B3) at (-5.3, -37.3);
\coordinate (A4) at (-5.6, -23.2);
\coordinate (B4) at (0.1, -11.7);
\coordinate (B1) at (21.5, -11.8);

\fill[babyblue!5] (O) -- (A1) -- (B1) -- (A2) -- cycle;
\fill[antiquebrass!8] (O) -- (A2) -- (B2) -- (A3) -- cycle;
\fill[britishracinggreen!5] (O) -- (A3) -- (B3) -- (A4) -- cycle;
\fill[lime!8] (O) -- (A4) -- (B4) -- (A1) -- cycle;

\draw[black, thin] (A1) -- (O) -- (A2) -- (B2) -- (A3) -- (B3) -- (A4) -- (B4) -- (A1) -- (B1) -- (A2);
\draw[black, thin] (A4) -- (O) -- (A3);

\filldraw[black] (O) circle (4pt);
\filldraw[black] (9.2, -24.3) circle (0pt) node[anchor=north east]{$F_{11}$};

\filldraw[black] (25, -21.9) circle (4pt) node[anchor=west]{$F_{21}$};
\filldraw[black] (12.3, -34.3) circle (4pt);
\filldraw[black] (11.4, -35.1) circle (0pt) node[anchor=north]{$F_{10}$};
\filldraw[black] (-5.6, -23.2) circle (4pt) node[anchor=east]{$F_{01}$};
\filldraw[black] (11.7, -16) circle (4pt); 
\filldraw[black] (9.5, -15.5) circle (0pt) node[anchor=south west]{$F_{12}$};

\filldraw[black] (21.5, -11.8) circle (4pt) node[anchor=west]{$F_{22}$};
\filldraw[black] (22.1, -36.3) circle (4pt) node[anchor=west]{$F_{20}$};
\filldraw[black] (-5.3, -37.3) circle (4pt) node[anchor=east]{$F_{00}$};
\filldraw[black] (0.1, -11.7) circle (4pt) node[anchor=east]{$F_{02}$};

\filldraw[black] (17.63456,-18.97232) node[anchor=center]{$p_{11}$};
\filldraw[black] (19.8,-28.64956) node[anchor=east]{$p_{10}$};
\filldraw[black] (2.5,-30.5) node[anchor=center]{$p_{00}$};
\filldraw[black] (1.5,-18.94961) node[anchor=west]{$p_{01}$};
\end{tikzpicture}

\caption{A $2\times 2$ net. See Subsection~\ref{Subsection-dual-convex}.}
\label{figure:lemma-eqqq}
\end{figure}
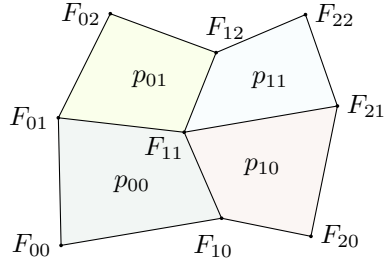


Edges or faces spanned by points with the same indices in two $m\times n$ nets are called \emph{corresponding}. 
Two $m\times n$ nets are \emph{parallel} or \emph{Combescure transformations} of each other if their corresponding edges are parallel. 
Two $m\times n$ nets are \emph{v-parallel} if their vertices with the same indices are parallel. 

Given a convex quadrilateral in a plane in $I^3$, and a point $O$ not in the plane, the union of all rays emanating from $O$ and intersecting the quadrilateral is called a \emph{convex $4$-hedral angle with the vertex $O$}. The union of all rays emanating from $O$ and intersecting one side of the quadrilateral is a \emph{flat angle} of the $4$-hedral angle.

To state our results, we need the following new notion. 

\fixskip
\begin{definition} 
    \label{admissible-conv-pol--angle}
(See Figure~\ref{figure:admissible-angle} to the left and middle.)
A convex $4$-hedral angle in $I^3$ is \emph{admissible} if the isotropic line through its vertex intersects its interior. 
An $m\times n$ net is \emph{dual-convex} if $m,n\ge 2$ and the planes of the four consecutive faces around each non-boundary vertex are the planes of the four consecutive flat angles of some admissible $4$-hedral angle. 
\end{definition}
\fixskip

Notice that any $m\times n$ net such that the union of faces is a graph of a convex or concave function $z(x,y)$ is necessarily a dual-convex net. However, the converse is not true: the union of faces of a dual-convex $m\times n$ net is not necessarily a graph of a convex function. In Definition~\ref{admissible-conv-pol--angle}, the corresponding face angles at a vertex of a dual-convex net and the admissible \(4\)-hedral angle may differ;
namely, the face angles can be vertical or supplementary. 
For instance, one can take four vectors generating an admissible $4$-hedral angle (Figure~\ref{figure:admissible-angle} to the left) and reverse the direction of one vector (Figure~\ref{figure:admissible-angle} to the middle). Convex quadrilaterals constructed on the resulting vectors 
do not form a convex  $4$-hedral angle anymore but they still lie in the planes of flat angles of the initial convex $4$-hedral angle. So,
a dual-convex net can have ``negatively curved'' vertices (where the neighboring faces form a graph of neither convex nor concave function) and ``non-admissible'' ones (where they do not form a graph of a function or even intersect each other at interior points). 
However, a dual-convex net can have neither 
faces lying in isotropic planes 
nor adjacent faces lying in the same plane. 


The \emph{metric dual} of a dual-convex $m\times n$ net with faces $p_{kl}$ is the $(m-1)\times(n-1)$ net formed by the points 
$p_{kl}^*$ metric dual to the planes of the faces $p_{kl}$, where $0\leq k< m,0\leq l< n$. 
The metric dual constructed in this way has convex faces if and only if the initial $m\times n$ net is dual-convex  (by Lemma~\ref{l-dual-convex} below), hence the terminology. This also explains the condition $m,n\ge 2$ in Definition~\ref{admissible-conv-pol--angle}.




\subsection{Discrete isotropic Gaussian curvature}
\label{ssec-curvature}

The famous Theorema Egregium by Gauss says that the Gaussian curvature is invariant under smooth isometries. In Euclidean geometry, there is a well-known discrete analog of Gaussian curvature with similar properties: the (total) Gaussian curvature at a point of an $m\times n$ net is $2\pi$ minus the sum of the face angles meeting at the point. In isotropic geometry, the analogous expression vanishes identically (for ``admissible'' vertices). Similarly to other cases when a straightforward analog of a Euclidean invariant degenerates, we look for a \emph{replacing} invariant that is non-degenerate. Thus, instead of the sum of flat angles around a point, we look at the measure of the dual solid angle as follows.

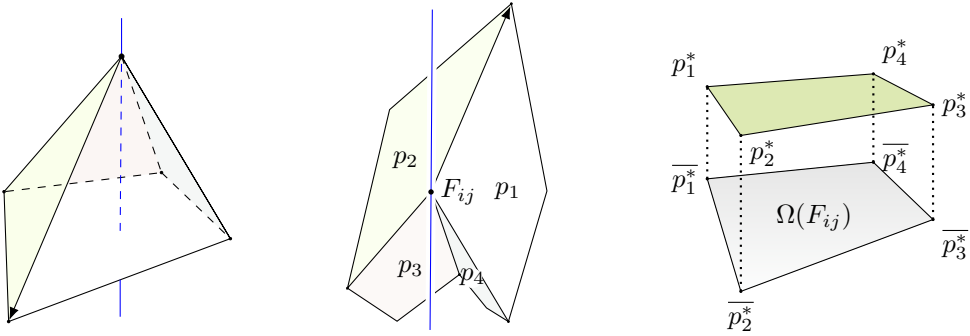
\begin{figure}[htbp]
  \centering
  \begin{tikzpicture}[scale=0.45]

\coordinate (A) at (-3.66577, 2.17019);
\coordinate (B) at (-7.10198, -1.81635);
\coordinate (C) at (-6.96776, -5.64182);
\coordinate (D) at (-0.4846, -3.19889);
\coordinate (E) at (-2.498, -1.2526);
\coordinate (H) at (-3.66577, 3.29769);
\coordinate (F) at (-3.70604, -2.9707);
\coordinate (G) at (-3.70604, -4.40693);
\coordinate (I) at (-3.70604, -5.5);

\fill[britishracinggreen!4] (A) -- (E) -- (D) -- cycle;
\fill[antiquebrass!8] (A) -- (B) -- (E) -- cycle;
\fill[lime!8] (A) -- (B) -- (C) -- cycle;

\draw[black, thin] (D) -- (A) -- (B) -- (C) -- (D) -- (A);
\draw[blue, thin] (A) -- (H);
\draw[blue, thin] (G) -- (I);
\draw [-Latex] (A) -- (-6.95, -5.6);

\draw[black, dashed] (A) -- (E) -- (B);
\draw[blue, dashed] (F) -- (A);
\draw[black, dashed] (E) -- (D);

\filldraw[black] (A) circle (2pt) node[anchor=north]{};
\filldraw[black] (B) circle (1pt) node[anchor=north]{};
\filldraw[black] (C) circle (1pt) node[anchor=north]{};
\filldraw[black] (D) circle (1pt) node[anchor=north]{};
\filldraw[black] (E) circle (1pt) node[anchor=north]{};

\end{tikzpicture}\qquad\qquad
\begin{tikzpicture}[scale=0.32]

\coordinate (A) at (-3.66577, 2.17019);
\coordinate (B) at (-7.10198, -1.81635);
\coordinate (C) at (-0.36379, 9.96877);
\coordinate (D) at (-0.4846, -3.19889);
\coordinate (E) at (-2.498, -1.2526);
\coordinate (H) at (-3.61208, 9.72716);
\coordinate (I) at (-3.70604, -3.54788);
\coordinate (F) at (-3.70604, -2.9707);
\coordinate (G) at (-3.70604, -4.40693);
\coordinate (T) at (-5.37042, 5.57955);
\coordinate (R) at (1.1, 2.21045);
\coordinate (U) at (-5.06173, -3.18547);
\coordinate (V) at (-1.38392, -2.63514);

\fill[lime!6] (A) -- (B) -- (T) -- (C) -- cycle;
\fill[britishracinggreen!4] (A) -- (D) -- (V) -- (E) -- cycle;
\fill[antiquebrass!6] (A) -- (E) -- (U) -- (B) -- cycle;

\draw[black, thin] (A) -- (D) -- (R) -- (C);
\draw[black, thin] (A) -- (B) -- (T) -- (C);
\draw[black, thin] (A) -- (E) -- (U) -- (B);
\draw[black, thin] (E) -- (V) -- (D);
\draw [-Latex] (A) -- (-0.4, 9.86877);
\draw[white, line width=3.5pt] (H) -- (I);
\draw[blue, thin] (H) -- (I);

\filldraw[black] (A) circle (2.5pt) node[anchor=west]{$F_{ij}$};
\filldraw[black] (B) circle (1pt) node[anchor=west]{};
\filldraw[black] (C) circle (1pt) node[anchor=north]{};
\filldraw[black] (D) circle (1pt) node[anchor=north]{};
\filldraw[black] (E) circle (1pt) node[anchor=east]{};

\draw[black] (-0.5, 2.21045) node[]{$p_1$};
\draw[black] (-4.5, -1) node[]{$p_3$};
\draw[black] (-4.7, 3.5) node[]{$p_2$};
\filldraw[white] (-1.97, -1.4) circle (11pt);
\draw[black] (-1.95, -1.36) node[]{$p_4$};
\end{tikzpicture}
\qquad\qquad
\begin{tikzpicture}[scale=0.39]

\coordinate (A) at (-3.66577, 2.17019);
\coordinate (B) at (-8.10198, -2.31635);
\coordinate (C) at (-6.96776, -6.14182);
\coordinate (D) at (-0.4846, -3.69889);
\coordinate (E) at (-2.498, -1.7526);
\coordinate (F) at (-8.10198, 0.81635);
\coordinate (G) at (-6.96776, -0.84182);
\coordinate (H) at (-0.4846, 0.19889);
\coordinate (I) at (-2.498, 1.2526);

\fill[shade] (B) -- (C) -- (D) -- (E) -- cycle;
\shade[top color=black!10, bottom color=britishracinggreen!1] (B) -- (C) -- (D) -- (E) -- cycle;
\draw[black, thin] (B) -- (C) -- (D) -- (E) -- (B);
\fill[applegreen!30] (F) -- (G) -- (H) -- (I) -- cycle;
\draw[black, thin] (F) -- (G) -- (H) -- (I) -- cycle;
\draw[black, dotted, thick] (B) --  (F);
\draw[black, dotted, thick] (C) --  (G);
\draw[black, dotted, thick] (D) --  (H);
\draw[black, dotted, thick] (E) --  (I);

\filldraw[black] (B) circle (1pt) node[anchor=east]{$\overline{p_1^*}$};
\filldraw[black] (C) circle (1pt) node[anchor=north]{$\overline{p_2^*}$};
\filldraw[black] (D) circle (1pt) node[anchor=north west]{$\overline{p_3^*}$};
\filldraw[black] (E) circle (1pt);
\filldraw[black] (-2.5, -1.6) circle (0pt) node[anchor=west]{$\overline{p_4^*}$};
\filldraw[black] (F) circle (1pt) node[anchor=south east]{${p_1^*}$};
\filldraw[black] (G) circle (1pt);
\filldraw[black] (-6.96776, -0.7) circle (0pt) node[anchor=north west]{${p_2^*}$};
\filldraw[black] (H) circle (1pt) node[anchor=west]{${p_3^*}$};
\filldraw[black] (I) circle (1pt) node[anchor=south west]{${p_4^*}$};

\draw[black] (-4.5, -3.6) node[]{$\Omega(F_{ij})$};


\end{tikzpicture}
\caption{Left: An admissible 4-hedral angle. The isotropic line (blue) through the vertex intersects the angle interior. Middle: Four consecutive faces $p_1,{p_2},{p_3},{p_4}$ around a vertex $F_{ij}$ of a dual-convex net. In the left and middle figures,
the vertex is the same (black point), the four emanating edges are the same except for the one equipped with an arrow (whose direction is reversed),  
and faces of the same color lie in the same plane. The union of the faces 
$p_1,{p_2},{p_3},{p_4}$ is not a graph of a function $z=z(x,y)$. 
Right: The metric dual of the sub-net formed by $p_1,{p_2},{p_3},{p_4}$ is a convex quadrilateral ${p_1^*}{p_2^*}{p_3^*}{p_4^*}$ (see Lemma~\ref{l-dual-convex}). 
The total isotropic Gaussian curvature $\Omega(F_{ij})$ concentrated at the vertex $F_{ij}$ is the oriented area of its top view $\overline{p_1^*}\,\overline{p_2^*}\,\overline{p_3^*}\,\overline{p_4^*}$. See Definitions~\ref{admissible-conv-pol--angle} and~\ref{def-curvature}.}
\label{figure:admissible-angle}
\end{figure}

\fixskip
\begin{definition} \label{def-curvature}
(See Figure~\ref{figure:admissible-angle} to the middle and right.)
Let $F_{ij}$ be a dual-convex $m\times n$ net. Consider 
the consecutive faces $p_1,p_2,p_3,p_4$ around a particular non-boundary vertex $F_{ij}$. The top views of the points metric dual to the planes of these faces form a convex quadrilateral 
$\overline{p_1^*}\,\overline{p_2^*}\,\overline{p_3^*}\,\overline{p_4^*}$ called the \emph{top view of the isotropic Gaussian image of $F_{ij}$}.
The oriented area of this quadrilateral is called the \emph{total isotropic Gaussian curvature concentrated at the vertex} $F_{ij}$, or the \emph{value of the discrete curvature form at} $F_{ij}$,
or simply the \emph{curvature at} $F_{ij}$:
\begin{equation}\label{eq-def-curvature}
\Omega(F_{ij}):= \mathrm{Area}\left(\overline{p_1^*}\,\overline{p_2^*}\,\overline{p_3^*}\,\overline{p_4^*}\right)
=\frac{1}{2} \sum_{j=1}^{4} \det\left(\overline{p_j^*}, \overline{p_{j+1}^*}\right),
\qquad\text{where $\overline{p_{5}^*}:=\overline{p_{1}^*}$}.
\end{equation}
(The vanishing $z$-coordinate of $\overline{p_j^*}$ is dropped under the determinant.)
\end{definition}
\fixskip

With a slight abuse of notation, the curvature at 
$F_{ij}$ 
is denoted by $\Omega(F_{ij})$ although it depends on the neighboring vertices as well.

This definition differs from the known analogs \cite
{pottmann2007discrete,bobenko-2010-curv} in the following aspects. 
Value~\eqref{eq-def-curvature} should be viewed as the \emph{total} Gaussian curvature concentrated at a point rather than the value of a function, discretizing the Gaussian curvature, at a particular point. In other words, \eqref{eq-def-curvature} should be viewed as a discrete analog of the \emph{curvature form} $\Omega=K\,dxdy$ rather than the Gaussian curvature $K$ itself. Thus $\Omega(F_{ij})$ is defined as a single area 
rather than the ratio of two areas as in \cite[Eq.~(22)]{pottmann2007discrete}. This allows us to bypass the construction of a discrete Gauss image and the approximation of the unit sphere by parallel meshes, which is rather tricky for square-grid combinatorics. In our setup, just the \emph{top view} of the isotropic Gauss image is sufficient, being the same as the top view of the metric dual.


\section{Infinitesimal flexibility}
\label{sec-infinitesimal}

\subsection{Definition} 

An \emph{infinitesimal isotropic congruence} is a vector field on $I^3$ of the form
\begin{equation}\label{eq-isotropic-congruence}
  V(\mathbf{x})= a\cdot \mathbf{x}+\mathbf{b}, \quad 
a=  \begin{pmatrix} 0 & -\phi & 0  \\
 \phi & 0 & 0  \\
 c_1 & c_2 & 0 \end{pmatrix} 
\end{equation}
for some 
$\mathbf{b}\in\mathbb{R}^3$ and $\phi,c_1,c_2\in\mathbb{R}$. Clearly, such vector fields form the Lie algebra of $G^6$. (We do not use and do not discuss the product in this Lie algebra.) 

Let us define the infinitesimal flexibility in $I^3$. It requires the usual \emph{face condition} meaning the infinitesimal rigidity of faces. Compared to Euclidean geometry, we impose an additional \emph{vertex condition}, which is automatic in Euclidean geometry, but becomes crucial in isotropic geometry.

\fixskip
\begin{definition} \label{def-infinitesimal-flexibility}
(Cf.~\cite[Definition~57]{isometric-isotropic}.)
An \emph{infinitesimal isotropic isometric deformation} of a dual-convex $m\times n$ net $F_{ij}$
is an indexed collection of vectors $V_{ij}\in\mathbb{R}^3$, 
where $0\leq i\leq m,0\leq j\leq n$, satisfying 
\fixskip
\begin{description}
    \item[\textbf{face condition:}] for each $0\leq k< m,0\leq l< n$ there is an infinitesimal isotropic congruence $V$ (depending on $k$ and $l$) such that $V_{ij}=V(F_{ij})$ for each $i\in\{k,k+1\}$ and $j\in\{l,l+1\}$; and   
    \item[\textbf{vertex condition:}] for each $0< i< m,0< j< n$ we have $\frac{d}{dt}\left.\Omega(F_{ij}+tV_{ij})\right|_{t=0}=0$.
\end{description}
\fixskip
(Here $F_{ij}+tV_{ij}$ is a dual-convex $m\times n$ net for all sufficiently
small $t$ by Lemma~\ref{l-affine} below, hence $\Omega(F_{ij}+tV_{ij})$ is well-defined.)

An infinitesimal isotropic isometric deformation is \emph{trivial} if there is an infinitesimal isotropic congruence $V$ such that $V_{ij}=V(F_{ij})$ for 
all $0\leq i\leq m,0\leq j\leq n$. 
A dual-convex $m\times n$ net 
is \emph{infinitesimally flexible} in $I^3$ if it has a nontrivial infinitesimal isotropic isometric deformation. 
\end{definition}
\fixskip

\subsection{Statement of the classification}
\label{ssec-infinitesimal-statements}

Let us characterize infinitesimally flexible $m\times n$ nets in $I^3$; those turn out to be essentially the same 
ones as in Euclidean geometry. In particular, by studying the former, we get new insight into the latter. The characterization below is stated in several equivalent ways; that is why the used known notions are defined after the statement so that the reader can choose the way that she or he likes most. 

\fixskip
\begin{prop} \label{prop-infinitesimal-characterization}
For a dual-convex $m\times n$ net, 
the following conditions are equivalent:
\begin{itemize}
    \item[\textup{\textbf{(I)}}] the $m\times n$ net is infinitesimally flexible in $I^3$;
    \item[\textup{\textbf{(D)}}] the metric dual $(m-1)\times (n-1)$ net is infinitesimally deformable;
    \item[\textup{\textbf{(K)}}] the metric dual $(m-1)\times (n-1)$ net is a K\oe nigs net;
    \item[\textup{\textbf{(V)}}] the $m\times n$ net has a 
    non-planar v-parallel $m\times n$ net with vanishing mixed curvature of the corresponding vertices;
    \item[\textup{\textbf{(R)}}] the $m\times n$ net has a reciprocal-parallel net; 
    \item[\textup{\textbf{(E)}}] the $m\times n$ net is infinitesimally flexible in Euclidean geometry.
\end{itemize}
\end{prop}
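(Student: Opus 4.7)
The plan is to prove all five equivalences by routing them through the metric duality of isotropic geometry, with the novel content concentrated in (I)~$\Leftrightarrow$~(D) and the remaining equivalences reduced to known characterizations of K\oe nigs nets. I would arrange the argument as a star centered at condition (K), via (I)~$\Leftrightarrow$~(D)~$\Leftrightarrow$~(K), then (K)~$\Leftrightarrow$~(R), (V), (E).

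For (I)~$\Leftrightarrow$~(D), I would first observe that the polarity with respect to the paraboloid $2z=x^2+y^2$ sends infinitesimal isotropic congruences to infinitesimal isotropic congruences: writing a face plane as $z=ax+by-c$ so its dual point is $(a,b,c)$, a direct computation shows that when the plane moves under the congruence \eqref{eq-isotropic-congruence} with parameters $(\phi,b_1,b_2,b_3,c_1,c_2)$, the dual point moves under an isotropic congruence with the same rotation $\phi$ but with the roles of $(b_1,b_2)$ and $(c_1,c_2)$ exchanged. The face condition in Definition~\ref{def-infinitesimal-flexibility} forces any two adjacent faces to share the rotation parameter $\phi$ along each non-isotropic shared edge, and connectedness propagates this to a single $\phi$ for the whole net. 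A short computation then shows that the velocity difference between two adjacent dual vertices $p_{kl}^*$ and $p_{k'l'}^*$ is parallel to the dual edge joining them, i.e., the dual deforms by an infinitesimal Combescure transformation. For the vertex condition, Definition~\ref{def-curvature} identifies $\Omega(F_{ij})$ with the oriented isotropic area of the face of the dual net around $F_{ij}$; thus $\frac{d}{dt}\Omega(F_{ij}+tV_{ij})|_{t=0}=0$ is exactly infinitesimal area-preservation of that dual face. Combining these, the primal data of an infinitesimal isotropic isometric deformation is in bijection with the dual data of an infinitesimal area-preserving Combescure transformation, and triviality corresponds to triviality.

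The equivalences (D)~$\Leftrightarrow$~(K) and (K)~$\Leftrightarrow$~(R) are then taken from known results: the former is established in the companion paper~\cite{pirahmad2024area}, which proves that Q-nets admitting infinitesimal area-preserving Combescure transformations are exactly K\oe nigs nets, the velocity diagram being the Christoffel dual; the latter is the classical definition of K\oe nigs nets via existence of a reciprocal-parallel (Christoffel dual) net~\cite{bobenko-2008-ddg}. For (K)~$\Leftrightarrow$~(V), the Christoffel dual is precisely a non-planar v-parallel net whose mixed discrete area form with the original vanishes at corresponding vertices — a standard reformulation of the reciprocal-parallel condition via mixed area. Finally, (K)~$\Leftrightarrow$~(E) is the classical Sauer theorem on infinitesimal flexibility of Q-nets~\cite{sauer:1970}: a Q-net is infinitesimally flexible in Euclidean space if and only if it is a K\oe nigs net, the velocity diagram of the flex serving as a reciprocal-parallel net.

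The main obstacle will be the first step, in particular the bookkeeping of how the six-parameter isotropic congruence on the primal side induces the velocity at a dual vertex and the verification that the face condition across every primal edge yields exactly the Combescure condition on the corresponding dual edge, with the common rotation parameter $\phi$ propagating correctly across the whole net (requiring care at vertices with isotropic edges, where the pointwise face condition is vacuous and $\phi$ must be matched through a neighbouring non-isotropic edge). A subsidiary technical point is to invoke the dual-convexity stability results (Lemmas~\ref{l-affine} and~\ref{l-dual-convex}) to justify that $\Omega(F_{ij}+tV_{ij})$ is smooth in $t$ near zero, so that the vertex condition is well-posed.
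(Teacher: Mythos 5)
Your overall architecture --- routing everything through the metric duality, with the real content in the primal--dual dictionary --- matches the paper's, and your computation that the polarity sends infinitesimal isotropic congruences to infinitesimal isotropic congruences (with $(b_1,b_2)$ and $(c_1,c_2)$ exchanged) is correct, as is the propagation of $\phi$ across shared edges. But the conclusion you draw is stated too strongly: already a \emph{trivial} primal deformation with $\phi\neq 0$ induces dual velocities $W(p_{kl}^*)$ whose differences $a^W(p_{kl}^*-p_{k'l'}^*)$ are \emph{not} parallel to the dual edges, reflecting the fact that the primal trivial deformations form a $6$-parameter family while the trivial infinitesimal Combescure transformations of the dual (translations) form only a $3$-parameter one. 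You must first use the infinitesimal rigidity of the planar top view to subtract the horizontal component and reduce to purely isotropic (vertical) velocity vectors; only then does each face plane move inside the pencil $z=(p^1+tc^1)x+(p^2+tc^2)y-(p^3+tc^3)$, so that each dual vertex moves with constant velocity $c^*_{kl}$ and the deformed dual is a Combescure transform. This is exactly the paper's Lemma~\ref{l-isotropic-infinitesimal} followed by Lemma~\ref{l-flexible-mixed}; your $\phi$-propagation is the germ of the former, but by itself it does not yield the Combescure condition you assert.

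The more serious gaps are in the spokes of your star at \textbf{(K)}. First, \textbf{(K)}$\Leftrightarrow$\textbf{(R)} is not ``the classical definition of K\oe nigs nets'': a reciprocal-parallel net of $F$ has its edges parallel to the \emph{transversal} edges of $F$ and is a different object from a Christoffel dual of the metric dual. Passing between them requires the explicit construction $C_{kl}=(-c_{kl}^2,c_{kl}^1,h_{kl})$, a $90^\circ$ rotation of the top view of the Christoffel dual of $p^*_{kl}$ together with a lift by a height function $h_{kl}$ whose integrability condition~\eqref{eq-height-function} is precisely equivalent to the Christoffel duality (the paper's Lemmas~\ref{l-height-function} and~\ref{l-deformable-reciprocal}); none of this appears in your proposal. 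Second, you misquote Sauer: the classical theorem is \textbf{(R)}$\Leftrightarrow$\textbf{(E)} (Euclidean infinitesimal flexibility $\Leftrightarrow$ existence of a reciprocal-parallel net), not ``infinitesimally flexible $\Leftrightarrow$ K\oe nigs''; moreover condition \textbf{(K)} concerns the \emph{metric dual} being K\oe nigs, not the net itself, so your spoke \textbf{(K)}$\Leftrightarrow$\textbf{(E)} is unsupported as stated. Third, your one-line treatment of \textbf{(V)} conflates a net v-parallel to the original $m\times n$ net with a Christoffel dual: condition \textbf{(V)} speaks of vanishing \emph{mixed curvature}, and one needs the observation that metric duality exchanges v-parallel nets of $F$ with Combescure-parallel nets of $p^*$ and the mixed curvature at vertices with the mixed area of the dual faces (Lemma~\ref{l-mixed-deformable}) before the K\oe nigs characterization applies. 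With these three repairs the argument closes, but as written the connections from \textbf{(K)} to \textbf{(R)}, \textbf{(V)}, and \textbf{(E)} are not established.
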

\fixskip

This result was essentially obtained 
in~\cite[Proposition~60 and Lemma~62]{isometric-isotropic} but our proof in \S\ref{ssec-infinitesimal-proofs} is slightly different. Instead of vectors $V_{ij}$ isotropic orthogonal to $F_{ij}$,
we use the isotropic vectors. 
This way we get a different construction of the reciprocal-parallel net; see~Lemma~\ref{l-deformable-reciprocal} below and cf.~\cite[Lemma~62]{isometric-isotropic}.

Let us define the known notions used in the proposition; cf.~\cite{isometric-isotropic}, \cite{bobenko-2008-ddg}, \cite{bobenko-2010-curv}, \cite{sauer:1970}, and \cite{Schief2008}, respectively:

\textbf{(D)} \label{def-(D)} An \emph{infinitesimal area preserving Combescure transformation}, or \emph{infinitesimal deformation}, of an $(m-1)\times (n-1)$ net $P_{ij}$ in $\mathbb{R}^d$ is a collection of vectors $C_{ij}\in\mathbb{R}^d$, where $0\leq i< m,0\leq j< n$,
such that the nets $P_{ij}(t):=P_{ij}+tC_{ij}$ are Combescure transformations of the net $P_{ij}$ for all sufficiently small $t\in\mathbb{R}$ and 
$$\frac{d}{dt}\left.\mathrm{Area}(P_{ij}(t)P_{i-1,j}(t)P_{i-1,j-1}(t)P_{i,j-1}(t))\right|_{t=0}=0.$$
A net $P_{ij}$ is \emph{infinitesimally deformable} if it has an infinitesimal deformation with not all vectors $C_{ij}$ equal. An infinitesimal deformation consisting of equal vectors $C_{ij}$ is referred to as \emph{trivial}.

\textbf{(K)} Two (labeled) quadrilaterals 
$ABCD$ and $A'B'C'D'$ are \emph{dual} if their corresponding sides and non-corresponding diagonals are parallel (see Figure~\ref{figure:dual-of-2-quads}):
$$
AB\parallel A'B', \quad BC\parallel B'C', \quad CD\parallel C'D', \quad DA\parallel D'A', \quad AC\parallel B'D', \quad BD\parallel C'D'.
$$
Two $(m-1)\times (n-1)$ nets are \emph{Christoffel dual} if their corresponding faces are dual. A \emph{K\oe nigs net} is a one having a Christoffel dual. K\oe nigs nets have many geometric characterizations; see~\cite[\S2.2.2]{pirahmad2024area}.

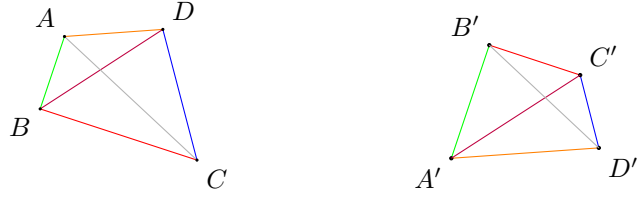
\begin{figure}[!t]
  \centering
\begin{tikzpicture}[scale=0.032]

\coordinate (A) at (54.48536, -61.2148);
\coordinate (B) at (40.45495, -7.05598);
\coordinate (C) at (0, -10);
\coordinate (D) at (-10, -40);

\draw[blue, thin] (A) -- (B);
\draw[orange, thin] (B) -- (C);
\draw[green, thin] (C) -- (D);
\draw[red, thin] (D) -- (A);

\draw[purple, name path=line 1] (B) -- (D);
\draw[gray!60, name path=line 2] (A) -- (C);

\filldraw[black] (A) circle (12pt) node[anchor=north west]{$C$};
\filldraw[black] (B) circle (12pt) node[anchor=south west]{$D$};
\filldraw[black] (C) circle (12pt) node[anchor=south east]{$A$};
\filldraw[black] (D) circle (12pt) node[anchor=north east]{$B$};

\end{tikzpicture}
\qquad\qquad\qquad
\begin{tikzpicture}[scale=0.05]

\coordinate (C) at (0, -10);
\coordinate (D) at (-10, -40);
\coordinate (B') at (23.91368, -17.93708);
\coordinate (A') at (28.81781, -37.24708);

\draw[blue, thin] (A') -- (B');
\draw[orange, thin] (A') -- (D);
\draw[green, thin] (C) -- (D);
\draw[red, thin] (C) -- (B');

\filldraw[black] (A') circle (12pt) node[anchor=north west]{$D'$};
\filldraw[black] (B') circle (12pt) node[anchor=south west]{$C'$};
\filldraw[black] (C) circle (12pt) node[anchor=south east]{$B'$};
\filldraw[black] (D) circle (12pt) node[anchor=north east]{$A'$};

\draw[purple, name path=line 1, thin] (B') -- (D);
\draw[gray!60, name path=line 2, thin] (A') -- (C);

\end{tikzpicture}
\caption{Dual quadrilaterals. Corresponding sides (with the same color) are parallel and non-corresponding diagonals (also with the same color) are parallel.}
\label{figure:dual-of-2-quads}
\end{figure}

\textbf{(V)} The \emph{mixed area} of two (labeled) plane closed broken lines
$ABCD$ and $A'B'C'D'$ with parallel corresponding sides is
$
\frac{d}{dt}\left.\mathrm{Area}(A(t)B(t)C(t)D(t))\right|_{t=0}
$,
where $A(t):=A+A't$,  $B(t):=B+B't$ etc, and the oriented area is defined by~\eqref{eq-def-curvature}. Note that the mixed area
is invariant under the swap of the two broken lines. 
The \emph{mixed curvature} of two corresponding non-boundary vertices of two v-parallel 
$m\times n$ nets is the mixed area of the top views of the isotropic Gauss images of the vertices. The latter is defined analogously to Definition~\ref{def-curvature}: the \emph{top view of the isotropic Gauss image} of a vertex is the closed broken line $\overline{p_1^*}\,\overline{p_2^*}\,\overline{p_3^*}\,\overline{p_4^*}$, where $p_1,p_2,p_3,p_4$ are the consecutive faces around the vertex. We set by definition that two corresponding boundary vertices always have vanishing mixed curvature.


Here the usage of the notion ``mixed area'' matches \cite{bobenko-2010-curv}. Since the curvature equals a certain area in our setup, we use the abbreviation ``mixed curvature'' for the corresponding mixed area, just like in~\cite[Section~2.4]{isometric-isotropic}.

\textbf{(R)} 
By a (general) \emph{net} we mean any collection of $mn$ points $C_{kl}\in\mathbb{R}^3$ indexed by two integers $0\le k< m$ and $0\le l< n$. If all points $C_{kl}$ coincide, then the net is \emph{trivial}. A nontrivial net
$C_{kl}$ 
is \emph{reciprocal-parallel} to a net $F_{ij}$, where $0\le i\le m$, $0\le j\le n$, 
if 
$C_{i,j-1}C_{ij}\parallel F_{i+1,j}F_{ij}$ for each $0\le i<m$,  $0<j<n$ 
and $C_{i-1,j}C_{ij}\parallel F_{i,j+1}F_{ij}$ for each $0< i<m$, $0\le j< n$. See Figure~\ref{figure:recip-parall}. 

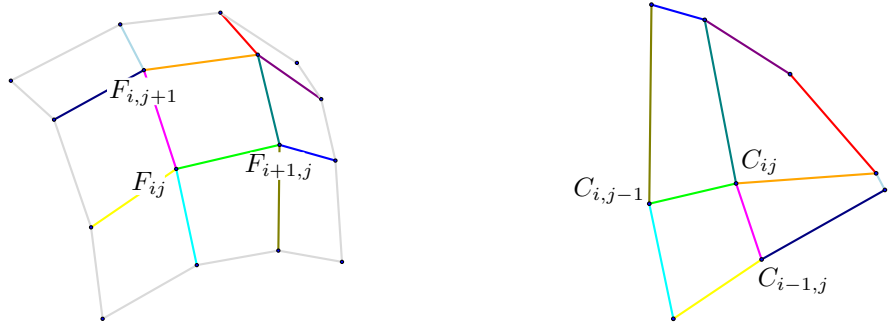
\begin{figure}[htbp]
    \definecolor{color1}{RGB}{255,0,0}    
    \definecolor{color2}{RGB}{0,255,0}    
    \definecolor{color3}{RGB}{0,0,255}    
    \definecolor{color4}{RGB}{255,255,0}  
    \definecolor{color5}{RGB}{255,165,0}  
    \definecolor{color6}{RGB}{128,0,128}  
    \definecolor{color7}{RGB}{0,255,255}  
    \definecolor{color8}{RGB}{255,0,255}  
    \definecolor{color9}{RGB}{173,216,230} 
    \definecolor{color10}{RGB}{128,128,0} 
    \definecolor{color11}{RGB}{0,128,128} 
    \definecolor{color12}{RGB}{0,0,128}   
    
    \centering    
    \begin{tikzpicture}[scale=0.61]
        \coordinate (f03) at (-7.17881, 1.8129);
        \coordinate (f02) at (-6.2473, 0.96606);
        \coordinate (f01) at (-5.44881,-1.37409);
        \coordinate (f00) at (-5.2,-3.36197);
        \coordinate (f13) at (-4.82013, 3.03595);
        \coordinate (f12) at (-4.31006, 2.04267);
        \coordinate (f11) at (-3.61208, -0.10496);
        \coordinate (f10) at (-3.16845,-2.19516);
        \coordinate (f23) at (-2.65907, 3.29098);
        \coordinate (f22) at (-1.85371, 2.37824);
        \coordinate (f21) at (-1.38392, 0.41853);
        \coordinate (f20) at (-1.40955,-1.87961);
        \coordinate (f33) at (-1.00927, 2.2);
        \coordinate (f32) at (-0.4846, 1.4118);
        \coordinate (f31) at (-0.17834, 0.07689);
        \coordinate (f30) at (-0.0372, -2.12487);
        
        \draw[color4, thick] (f11) -- (f01);
        \draw[color2, thick] (f21) -- (f11);
        \draw[color3, thick] (f31) -- (f21);
        \draw[color12, thick] (f12) -- (f02);
        \draw[color5, thick] (f22) -- (f12);
        \draw[color6, thick] (f32) -- (f22);
        \draw[color7, thick] (f11) -- (f10);
        \draw[color8, thick] (f12) -- (f11);
        \draw[color9, thick] (f13) -- (f12);
        \draw[color10, thick] (f21) -- (f20);
        \draw[color11, thick] (f22) -- (f21);
        \draw[color1, thick] (f23) -- (f22);
        
        \draw[gray!30, thick] (f03) -- (f13) -- (f23) -- (f33) -- (f32) -- (f31) -- (f30) -- (f20) -- (f10) -- (f00) -- (f01) -- (f02) -- (f03);
        
        
        \node[draw=white, fill=white, rectangle, inner sep=5pt, yshift=-8pt, xshift=-0.8pt, minimum width=0.9cm, minimum height=0.35cm] at (f12) {};
        \node[draw=white, fill=white, rectangle, inner sep=5pt, yshift=-8pt, xshift=-0.8pt, minimum width=0.7cm, minimum height=0.35cm] at (f21) {};
        \node[draw=white, fill=white, rectangle, inner sep=-1.2pt, yshift=-6pt, xshift=-10pt, minimum width=0cm, minimum height=0cm] at (f11) {$F_{ij}$};
        \filldraw[black] (f12) node[anchor=north]{$F_{i,j+1}$};
        \filldraw[black] (f21) node[anchor=north]{$F_{i+1,j}$};
        
        \foreach \name in {f00,f01,f02,f03,f10,f11,f12,f13,f20,f21,f22,f23,f30,f31,f32,f33} {
            \filldraw[fill=blue] (\name) circle (1pt) node[]{};
        }
    \end{tikzpicture}\qquad\qquad\qquad\qquad
    \begin{tikzpicture}[scale=0.68]

        \coordinate (c02) at (10.29415, -2.36481);
        \coordinate (c01) at (7.9089, -3.71974);
        \coordinate (c00) at (6.201126674523369, -4.877071907148679);
        \coordinate (c12) at (10.12478, -2.04019);
        \coordinate (c11) at (7.41492, -2.23778);
        \coordinate (c10) at (5.73537, -2.63297);
        \coordinate (c22) at (8.45934, -0.10659);
        \coordinate (c21) at (6.80802, 0.95195);
        \coordinate (c20) at (5.77771, 1.24834);
        
        \draw[color4, thick] (c00) -- (c01);
        \draw[color2, thick] (c10) -- (c11);
        \draw[color3, thick] (c20) -- (c21);
        \draw[color12, thick] (c01) -- (c02);
        \draw[color5, thick] (c11) -- (c12);
        \draw[color6, thick] (c21) -- (c22);
        \draw[color7, thick] (c00) -- (c10);
        \draw[color8, thick] (c01) -- (c11);
        \draw[color9, thick] (c02) -- (c12);
        \draw[color10, thick] (c10) -- (c20);
        \draw[color11, thick] (c11) -- (c21);
        \draw[color1, thick] (c12) -- (c22);
        
        
        \filldraw[black] (c11) node[yshift=8pt, xshift=9pt]{$C_{ij}$};
        \filldraw[black] (c01) node[yshift=-8pt, xshift=12pt]{$C_{i-1,j}$};
        \filldraw[black] (c10) node[yshift=5pt, xshift=-15pt]{$C_{i,j-1}$};
        
        \foreach \name in {c00,c01,c02,c10,c11,c12,c20,c21,c22} {
            \filldraw[fill=blue] (\name) circle (1pt) node[]{};
        }
    \end{tikzpicture}
    
    \caption{A reciprocal-parallel net (right) to a $3\times 3$ net (left). The edges of the same color are parallel (except for the light gray ones). See Proposition~\ref{prop-infinitesimal-characterization}. }
    \label{figure:recip-parall}
\end{figure}

\textbf{(E)} An \emph{infinitesimal Euclidean congruence} is a vector field on $\mathbb{R}^3$ of the form $V(\mathbf{x})=\mathbf{a}\times\mathbf{x}+\mathbf{b}$
for some $\mathbf{a},\mathbf{b}\in\mathbb{R}^3$. 
An \emph{infinitesimal Euclidean isometric deformation} of an $m\times n$ net $F_{ij}$
is a net 
$V_{ij}\in\mathbb{R}^3$ 
such that 
for each $0\leq k< m,0\leq l< n$ there is an infinitesimal Euclidean congruence $V$ (possibly, depending on $k$ and $l$) satisfying $V_{ij}=V(F_{ij})$ for each $i\in\{k,k+1\}$ and $j\in\{l,l+1\}$.
An infinitesimal Euclidean isometric deformation is \emph{trivial} if there is an infinitesimal Euclidean congruence $V$ such that $V_{ij}=V(F_{ij})$ for all $0\leq i\leq m,0\leq j\leq n$, otherwise it is \emph{nontrivial}. 
A net is \emph{infinitesimally flexible in Euclidean geometry} if it has a nontrivial infinitesimal Euclidean isometric deformation.

\fixskip
\begin{example} 
    Any dual-convex $m\times n$ net with planar parameter lines 
    is infinitesimally flexible 
    (in both Euclidean and isotropic geometries). 
\end{example}
\fixskip

This follows from Proposition~\ref{prop-infinitesimal-characterization}\textbf{(I,K,E)} and \cite[Theorem~22]{kilian2023smooth}.
See~\cite{PP-2022} for applications of such~nets. 

\subsection{Proof of the classification}
\label{ssec-infinitesimal-proofs}

In this subsection, we prove Proposition~\ref{prop-infinitesimal-characterization}. 
First, we need two lemmas showing that our basic notions are well-defined: 
the metric dual of a dual-convex net has indeed convex faces, and the net used in the vertex condition in Definition~\ref{def-infinitesimal-flexibility} has indeed planar faces.


\fixskip
\begin{lemma} \label{l-dual-convex} Points $A,B,C,D\in I^3$ are consecutive vertices of a convex quadrilateral not lying in an isotropic plane if and only if planes $A^*,B^*,C^*,D^*$ contain consecutive flat angles of an admissible $4$-hedral angle. 
\end{lemma}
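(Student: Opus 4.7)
The plan is to translate the three-dimensional dual picture into a planar convex-geometry question via the top view. The starting observation is that if $A,B,C,D$ are coplanar in a plane $\pi$, then by incidence reversal of the polarity the four dual planes $A^*,B^*,C^*,D^*$ share the common point $O:=\pi^*$; moreover $O$ is a finite point iff $\pi$ is non-isotropic. So the lemma reduces to analysing four planes through a common finite point and the $4$-hedral angle they bound relative to the isotropic line through $O$.

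For the forward direction, I would assume $ABCD$ is a convex quadrilateral with consecutive vertices in a non-isotropic plane $\pi$. Shift coordinates so that $O=\pi^*$ is the origin; each plane $P^*$ then takes the form $Z=\overline{P}\cdot(X,Y)$, a plane through the origin whose coefficients are the top-view coordinates of $P$ in the original frame. The natural candidate for the $4$-hedral angle is the convex cone
\[
K:=\{(X,Y,Z):Z\ge \overline{P}\cdot(X,Y)\text{ for all }P\in\{A,B,C,D\}\},
\]
i.e., the epigraph of the upper envelope of four linear functions. A standard fact about support functions says that $P^*$ contributes a face to $K$ iff $\overline{P}$ is a vertex of $\mathrm{conv}(\overline{A},\overline{B},\overline{C},\overline{D})$, and as the direction $(X,Y)$ rotates once around the origin the argmax of $\overline{P}\cdot(X,Y)$ sweeps through those vertices in their cyclic order on the hull. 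Since $\pi$ is non-isotropic, the top view restricts to an affine isomorphism $\pi\to\mathbb{R}^2$, so convex position with cyclic order $A,B,C,D$ in $\pi$ is equivalent to the same in the top view. Thus $K$ has all four planes as faces in the cyclic order $A^*,B^*,C^*,D^*$, and admissibility is immediate: the $+Z$-ray is in the interior of $K$ (all four inequalities are strict at $(0,0,Z)$ for $Z>0$) and lies on the isotropic line through $O$.

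For the converse, let $\mathcal{K}$ be an admissible $4$-hedral angle whose consecutive flat angles lie in $A^*,B^*,C^*,D^*$, with vertex $O$. Since $O$ is finite, $\pi:=O^*$ is a non-isotropic plane through $A,B,C,D$. Shifting $O$ to the origin as before, admissibility gives a ray of the $Z$-axis (say $+Z$; the other case is symmetric) in the interior of $\mathcal{K}$; thus $\mathcal{K}$ lies on the $Z\ge\overline{P}\cdot(X,Y)$ side of each face plane, so $\mathcal{K}\subseteq K$. But $\mathcal{K}$ carries faces in all four planes, so none of these planes can be redundant for $K$; this forces $\mathcal{K}=K$ and makes $K$ a genuine $4$-hedral angle. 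Hence $\overline{A},\overline{B},\overline{C},\overline{D}$ are in convex position with cyclic order $A,B,C,D$, which via the top-view isomorphism yields the desired convex quadrilateral in $\pi$.

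I expect the main obstacle to be the careful bookkeeping of cyclic order — verifying that as one travels around the apex of $K$ the four faces really appear in the order $A^*,B^*,C^*,D^*$ rather than a permutation — together with handling the degenerate cases: four non-coplanar points (the four dual planes then have no common point, so no candidate vertex for a $4$-hedral angle) and four points in an isotropic plane (the common point $\pi^*$ is at infinity and cannot serve as a finite apex). Neither obstacle is deep, but both deserve explicit mention in the write-up.
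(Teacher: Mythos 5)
Your proof is correct, and it takes a genuinely different route from the paper's. Both arguments start identically --- the four dual planes meet in a single finite point $O$ exactly when $A,B,C,D$ span a non-isotropic plane, and after translating $O$ to the origin each $P^*$ becomes $z=\overline{P}\cdot(x,y)$ --- but they diverge from there. The paper works with the \emph{edges} of the candidate cone: it computes the directions of the lines $A^*\cap B^*,\dots$ explicitly, observes that their top views are the $90^\circ$-rotations of the side vectors $AB,BC,CD,DA$ (hence sum to zero and inherit the cyclic order), concludes that the boundary of the cone is a global graph, and then checks convexity of that graph locally across each edge; the converse is a sign-bookkeeping argument on the generators $\pm OA',\dots$. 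You work with the \emph{facets} instead: you realize the cone as the epigraph $K$ of the upper envelope of the four linear functions and invoke the support-function/normal-fan duality to read off which planes carry two-dimensional flat angles and in which cyclic order, and in the converse you sandwich the given angle $\mathcal{K}$ inside $K$ and force equality. Your route is cleaner and avoids the sign analysis, at the cost of importing two standard facts that should be stated explicitly: that a pointed polyhedral cone equals the intersection of its facet half-spaces (this is what really upgrades ``no plane is redundant for $K$'' to ``$\mathcal{K}=K$'', since $\mathcal{K}$ is by definition the intersection of the four half-spaces $\{z\ge\overline{P}\cdot(x,y)\}$ supporting its facets), and that such a cone admits a bounded quadrilateral cross-section, which is needed to match the paper's definition of a convex $4$-hedral angle when $O$ is not interior to $\mathrm{conv}(\overline{A},\overline{B},\overline{C},\overline{D})$. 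The paper's more hands-on route has the side benefit of exhibiting the edge directions of the dual cone as rotated edge vectors, an observation that resurfaces in Lemma~\ref{l-deformable-reciprocal} and Corollary~\ref{p-reciprocal-parallel-top-view}.
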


\begin{proof} 
Non-collinear points $A,B,C,D$ lie in a non-isotropic plane 
if and only if 
planes $A^*,B^*,C^*,D^*$ have a unique common point $O$. Without loss of generality, assume $O$ is the origin (we can always translate the planes $A^*,B^*,C^*,D^*$ so that they pass through the origin). In this case,  $A,B,C,D$ lie in the $xy$-plane. If $A=(A_1,A_2,0)$ and $B=(B_1,B_2,0)$, then planes $A^*$ and $B^*$ have equations $z=A_1x+A_2y$ and 
$z=B_1x+B_2y$ respectively. Thus the top view of the line $A^*\cap B^*$ is $(A_1-B_1)x+(A_2-B_2)y=0$. 
Denote by $A'=(B_2-A_2,A_1-B_1,A_1B_2-A_2B_1)$ the point on the line $A^*\cap B^*$ with the top view $\overline{A'}=(B_2-A_2,A_1-B_1)$. 
Define 
$B',C',D'$ 
analogously.  See Figure~\ref{figure:lemma-dual-conv}. 

Assume that $ABCD$ is a convex quadrilateral. The vectors $O\overline{A'},\dots, O\overline{D'}$ are obtained by the same clockwise $90^\circ$-rotation from 
$AB,BC,CD,DA$.
Hence $O\overline{A'},\dots, O\overline{D'}$ appear in the listed cyclic order around $O$. Assume without loss of generality that their order is counterclockwise. Since $O\overline{A'}+\dots+O\overline{D'}=0$, it follows that $O$ lies in the interior of the convex hull of $\overline{A'},\overline{B'},\overline{C'},\overline{D'}$. Hence 
the union of angles ${A'}O{B'},\dots,{D'}O{A'}$ is the graph of some function 
$z(x,y)$. Let us show that the function is convex. Let
$A^\varepsilon:=\overline{A'}+\varepsilon\cdot{AB}$, where $\varepsilon>0$, be a small shift of $\overline{A'}$ in the counterclockwise direction about $O$. At the point $A^\varepsilon$, the linear function $z=B_1x+B_2y$ is greater than 
$z=A_1x+A_2y$ because the difference is 
$\varepsilon|{AB}|^2>0$. Since the angles \( D'OA' \) and \( A'OB' \) lie in the planes \( A^* \) and \( B^* \), respectively, their union is convex in the vicinity of \( O\overline{A'} \). Thus, $z(x,y)$ is convex in the vicinity of $O\overline{A'}$. Analogously, 
$z(x,y)$ is convex in the vicinities of $O\overline{B'},\dots, O\overline{D'}$.
Thus the graph of $z(x,y)$ bounds the 
desired admissible $4$-hedral angle.

Conversely, if the planes \(A^*, B^*, C^*, D^*\) contain consecutive flat angles of an admissible 4-hedral angle, then the latter is generated by the vectors $\pm O{A}',\pm O{B}',\pm O{C}',\pm O{D}'$
for some choice of signs. The union of flat angles is the graph of some convex or concave function $z(x,y)$, and the top views of the vectors appear in the listed cyclic order around $O$. Without loss of generality, assume that the order is counterclockwise. We now conclude that all the signs in \(\pm\) must be the same. Indeed,
similar to the above, consider a small counterclockwise shift of $\pm\overline{A'}$, say, 
$A^\varepsilon:=\pm\overline{A'}\pm\varepsilon\cdot{AB}$ for some $\varepsilon>0$.
At the point \(A^\varepsilon\), the linear function 
$z=B_1x+B_2y$ is greater (respectively, less) than 
$z=A_1x+A_2y$ 
if \(z(x,y)\) is convex 
(respectively, concave). Hence the signs in 
$\pm O{A}',\dots,\pm O{D}'$
must be the same.
Since the vectors $AB,BC,CD,DA$ are obtained by the same $90^\circ$
-rotation from $O\overline{A'},\dots, O\overline{D'}$, the points \(A, B, C, D\) are consecutive vertices of a convex quadrilateral.
\end{proof}

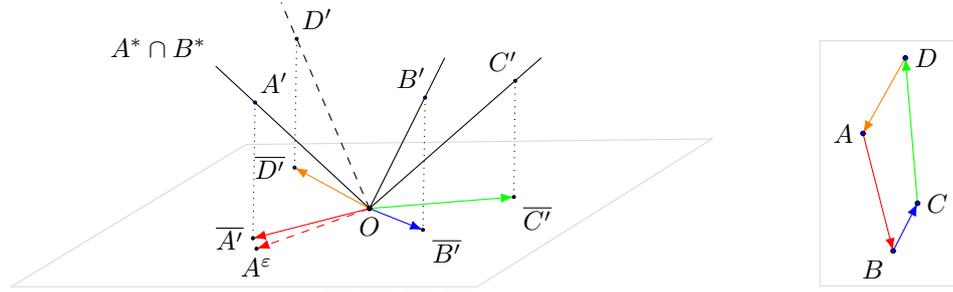
\begin{figure}[htbp]
  \centering    
\begin{tikzpicture}[scale=0.56]

\coordinate (p1) at (1.58474, -36.8);
\coordinate (p2) at (7.10107, -33.43342);
\coordinate (p3) at (18.0531, -33.30089);
\coordinate (p4) at (12.53677, -36.8);
\coordinate (O) at (10.00261, -34.94882);
\coordinate (AB) at (6.38876, -31.57807);
\coordinate (BC) at (7.92935, -30.07061);
\coordinate (CD) at (14.04204, -31.37929);
\coordinate (DA) at (11.77256, -31.37929);
\coordinate (A') at (7.31643, -32.43948);
\coordinate (B') at (8.29379, -30.93202);
\coordinate (C') at (13.42912, -31.92595);
\coordinate (D') at (11.30873, -32.32352);
\coordinate (A0) at (7.26673, -35.6532);
\coordinate (A1) at (7.35, -35.9);
\coordinate (B0) at (8.2441, -33.98008);
\coordinate (C0) at (13.39599, -34.67583);
\coordinate (D0) at (11.25903, -35.45442);

\draw[gray!30, thin] (p1) -- (p2) -- (p3) -- (p4) -- (p1);

\draw[black, thin] (AB) -- (O);
\draw[black, thin, dashed] (O) -- (BC);
\draw[black, thin] (CD) -- (O) -- (DA);
\draw[black, thin, dotted] (A') -- (A0);
\draw[black, thin, dotted] (B') -- (B0);
\draw[black, thin, dotted] (C') -- (C0);
\draw[black, thin, dotted] (D') -- (D0);

\draw[red, thin, -Latex] (O) -- (A0);
\draw[red, dashed, -Latex] (O) -- (A1);
\draw[orange, thin, -Latex] (O) -- (B0);
\draw[green, thin, -Latex] (O) -- (C0);
\draw[blue, thin, -Latex] (O) -- (D0);

\filldraw[black] (7.21643, -32.43948) node[anchor=south west]{$A'$};
\filldraw[black] (A1) circle (1pt) node[anchor=north]{$A^\varepsilon$};
\filldraw[black] (8.19379, -30.93202) node[anchor=south west]{$D'$};
\filldraw[black] (13.7, -31.92595) node[anchor=south east]{$C'$};
\filldraw[black] (11.55, -32.32352) node[anchor=south east]{$B'$};

\filldraw[black] (A0) node[anchor=east]{$\overline{A'}$};
\filldraw[black] (B0) node[anchor=east]{$\overline{D'}$};
\filldraw[black] (C0) node[anchor=north west]{$\overline{C'}$};
\filldraw[black] (D0) node[anchor=north west]{$\overline{B'}$};

\filldraw[black] (O) circle (1.5pt) node[anchor=north]{$O$};
\filldraw[black] (AB) node[anchor=south east]{$A^*\cap B^*$};

\foreach \name in {A',B',C',D',A0,B0,C0,D0} {
    \filldraw[fill=blue] (\name) circle (1pt) node[]{};
}
\end{tikzpicture}\qquad\qquad
\begin{tikzpicture}[scale=0.57]

\coordinate (p1) at (15.5, -39);
\coordinate (p2) at (15.5, -33.3);
\coordinate (p3) at (18.7, -33.3);
\coordinate (p4) at (18.7, -39);
\coordinate (A) at (16.49375, -35.45442);
\coordinate (B) at (17.1895, -38.18773);
\coordinate (C) at (17.75273, -37.07784);
\coordinate (D) at (17.47111, -33.69847);

\draw[gray!30, thin] (p1) -- (p2) -- (p3) -- (p4) -- (p1);

\draw[red, thin, -Latex] (A) -- (B);
\draw[blue, thin, -Latex] (B) -- (C);
\draw[green, thin, -Latex] (C) -- (D);
\draw[orange, thin, -Latex] (D) -- (A);

\filldraw[black] (A) node[anchor=east]{$A$};
\filldraw[black] (B) node[anchor=north east]{$B$};
\filldraw[black] (C) node[anchor=west]{$C$};
\filldraw[black] (D) node[anchor=west]{$D$};

\foreach \name in {A,B,C,D} {
    \filldraw[fill=blue] (\name) circle (1.5pt) node[]{};
}
\end{tikzpicture}

\caption{Left: The points \(\overline{A'}, \overline{B'}, \overline{C'}, \overline{D'}\) are the top views of \(A', B', C', D'\), respectively. The vectors \(O\overline{A'}, O\overline{B'}, O\overline{C'}, O\overline{D'}\) are obtained from \(AB, BC, CD, DA\) (right) by the 
clockwise rotation through \(90^\circ\) about the origin $O$. 
The point $A^\varepsilon$ is a small counterclockwise shift of $\overline{A'}$.
Right: The 
side vectors of the quadrilateral $ABCD$. See the proof of Lemma~\ref{l-dual-convex}.}
\label{figure:lemma-dual-conv}
\end{figure}


\begin{lemma} \label{l-affine} 
If $F_{ij}$ is a dual-convex $m\times n$ net and $V_{ij}$ is a collection of vectors satisfying the face condition from Definition~\ref{def-infinitesimal-flexibility}, then $F_{ij}+tV_{ij}$ is also a dual-convex $m\times n$ net for all $t$ sufficiently
small in absolute value. 
\end{lemma}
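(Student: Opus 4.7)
The plan is to show that the two defining properties of a dual-convex $m\times n$ net---convexity of each face and admissibility of the $4$-hedral angle at each non-boundary vertex---are \emph{open} conditions on the tuple of vertex positions, and that both are strictly satisfied by the starting net, so they persist under a small continuous perturbation.

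First I would verify planarity and convexity of the perturbed faces. Fix a face $p_{kl}$. By the face condition, there is an infinitesimal isotropic congruence $V(\mathbf{x})=a_{kl}\mathbf{x}+b_{kl}$ with $V_{ij}=V(F_{ij})$ for each $i\in\{k,k+1\}$, $j\in\{l,l+1\}$. Hence the four vertices of the deformed face are the images of $F_{k,l},F_{k+1,l},F_{k+1,l+1},F_{k,l+1}$ under the single affine map $T_t(\mathbf{x})=(I+ta_{kl})\mathbf{x}+tb_{kl}$. Since an affine map preserves coplanarity, the deformed face remains planar. The condition that the four images form a convex quadrilateral in the prescribed cyclic order can be written as a finite collection of strict sign conditions on determinants of differences of vertices; these all hold at $t=0$ and therefore on some neighbourhood of $t=0$.

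Next I would verify admissibility of the $4$-hedral angle at each non-boundary vertex $F_{ij}$. The perturbed point $F_{ij}+tV_{ij}$ still lies on every one of the four perturbed faces $p_{i-1,j-1}(t),p_{i,j-1}(t),p_{i,j}(t),p_{i-1,j}(t)$, because it is literally one of the defining vertices of each. Consequently, these four deformed face planes all meet at $F_{ij}+tV_{ij}$ and form a $4$-hedral angle with that apex, depending continuously on $t$. The admissibility requirements---four distinct flat-angle planes bounding a convex $4$-hedral angle whose interior is intersected by the isotropic line through the apex---are captured by strict inequalities on the planes' normals and on the signed position of the isotropic direction with respect to the four bounding half-spaces. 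Since these strict inequalities hold at $t=0$, they persist for $|t|$ small.

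Because there are finitely many faces and non-boundary vertices, hence finitely many open conditions, a common $\varepsilon>0$ suffices for all of them, proving the lemma. The only mildly delicate point is verifying that the four deformed planes around a vertex continue to meet in a common point; but this is automatic, as each of them contains the perturbed vertex itself. Beyond that observation, the argument is pure openness of the strict inequalities defining dual-convexity.
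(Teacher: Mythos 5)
Your proposal is correct and takes essentially the same route as the paper: the face condition realizes each deformed face as the image of the original under the affine map $\mathbf{x}\mapsto\mathbf{x}+tV(\mathbf{x})$, so it stays a convex planar quadrilateral, and the remaining dual-convexity (admissibility of the $4$-hedral angles) is an open condition preserved for small $|t|$ by continuity. You merely spell out the paper's closing ``by the continuity'' step in more detail, which is a harmless (and arguably welcome) elaboration.
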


\begin{proof} Consider a face $p_{kl}$ of the net $F_{ij}$. Take a vertex $F_{ij}$ of this face. By the face condition in Definition~\ref{def-infinitesimal-flexibility}, we get  $F_{ij}+tV_{ij}=F_{ij}+tV(F_{ij})$ for some infinitesimal isotropic congruence $V$. Then 
the affine transformation $\mathbf{x}\mapsto \mathbf{x}+tV(\mathbf{x})$
takes $p_{kl}$ to the corresponding face of the net $F_{ij}+tV_{ij}$. Thus the latter face is a convex quadrilateral. 
By the continuity, the net $F_{ij}+tV_{ij}$ remains dual-convex for sufficiently small $|t|$.
\end{proof}
   
Now we proceed to the proof of Proposition~\ref{prop-infinitesimal-characterization} and first address the equivalence \textbf{(I)}$\Leftrightarrow$\textbf{(V)}. The following lemma allows us to restrict ourselves to isotropic vectors $V_{ij}$. 

\fixskip
\begin{lemma} \label{l-isotropic-infinitesimal} An infinitesimally flexible dual-convex $m\times n$ net in $I^3$ 
has a nontrivial infinitesimal isotropic isometric deformation consisting of isotropic vectors. 
\end{lemma}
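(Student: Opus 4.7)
The plan is to take any nontrivial infinitesimal isotropic isometric deformation $\{V_{ij}\}$ of the net and subtract from it the restriction of a single (cleverly chosen) infinitesimal isotropic congruence, so that the resulting vectors become purely isotropic (vertical). The key observation is that an infinitesimal isotropic congruence \eqref{eq-isotropic-congruence} projects in the top view to an infinitesimal Euclidean rigid motion of the plane, parametrized by $\phi, b_1, b_2$; so the top view of the deformation is, face by face, an infinitesimal rigid motion in the $xy$-plane.

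First I would fix, for every face $p_{kl}$, an infinitesimal isotropic congruence $V^{kl}$ provided by the face condition, and let $W^{kl}$ be its top view. A crucial preparatory remark: in a dual-convex net no edge is isotropic, because a face containing a vertical edge would lie in an isotropic plane, contradicting the property that dual-convex nets have no faces in isotropic planes. Hence the endpoints of any edge have distinct top views. On a shared edge of two adjacent faces, $W^{kl}$ and $W^{k'l'}$ therefore agree on two distinct points of the plane, and a planar infinitesimal rigid motion is determined by its values at two distinct points; so adjacent face motions coincide, and by connectedness of the face lattice there is a single infinitesimal planar rigid motion $W$ with $W^{kl}=W$ for all $k,l$. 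Lift $W$ to an infinitesimal isotropic congruence $\tilde W$ by keeping $\phi, b_1, b_2$ the same and setting $c_1=c_2=b_3=0$, and define $V'_{ij}:=V_{ij}-\tilde W(F_{ij})$. By construction, the top view of each $V'_{ij}$ vanishes, so every $V'_{ij}$ is isotropic.

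Next I would verify that $V'_{ij}$ remains a nontrivial infinitesimal isotropic isometric deformation. The face condition holds because on $p_{kl}$ we have $V'_{ij}=(V^{kl}-\tilde W)(F_{ij})$, and the infinitesimal isotropic congruences form a Lie algebra closed under subtraction. The vertex condition is preserved because $\Omega$ is invariant under actual isotropic congruences (it is defined in terms of top views of metric duals, both of which are equivariant under $G^6$); passing to first order, $F_{ij}+tV'_{ij}$ differs from $\exp(-t\tilde W)(F_{ij}+tV_{ij})$ by $O(t^2)$, so $\frac{d}{dt}\Omega(F_{ij}+tV'_{ij})|_{t=0}=\frac{d}{dt}\Omega(F_{ij}+tV_{ij})|_{t=0}=0$. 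Nontriviality of $V'_{ij}$ follows at once: if $V'_{ij}=U(F_{ij})$ for an infinitesimal isotropic congruence $U$, then $V_{ij}=(U+\tilde W)(F_{ij})$ would be a trivial deformation, contradicting the hypothesis.

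The main obstacle to watch is the step where the top-view motions on adjacent faces are forced to agree; this requires that no edge be isotropic, which is precisely the point where the dual-convex hypothesis is used. Once that is in place, the rest is bookkeeping in the six-dimensional Lie algebra of isotropic congruences, plus the observation that $\Omega$ transforms covariantly under $G^6$ so that the vertex condition survives the modification by a trivial deformation.
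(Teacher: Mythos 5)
Your proof is correct and follows essentially the same route as the paper's: project to the top view, show that the planar top-view net is infinitesimally rigid so that all face motions share a common top view $W$, and subtract the lift of $W$ to obtain purely isotropic vectors, checking the face condition by linearity, the vertex condition by the invariance of $\Omega$ under isotropic congruences, and nontriviality by contradiction. The only minor variation is in the planar-rigidity step, where you use the two-point determination of a planar infinitesimal rigid motion along a non-isotropic edge, while the paper observes that the rotation parts $a_{kl}$ form a reciprocal-parallel net of the planar top view and hence coincide; the two arguments are equivalent in substance.
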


\begin{proof} Let $V_{ij}$, where $0\leq i\leq m,0\leq j\leq n$, be a nontrivial infinitesimal isotropic isometric deformation of the $m\times n$ 
net.
Then the top view $\overline{V_{ij}}$ 
is an infinitesimal Euclidean isometric deformation of
the top view of the net because the top view of each infinitesimal isotropic congruence $V$ (in face condition of Definition~\ref{def-infinitesimal-flexibility}) is an infinitesimal Euclidean congruence. 

Each $m\times n$ net in the plane is infinitesimally rigid, i.e., all planar infinitesimal Euclidean isometries are trivial. (Indeed, let \(V\colon\mathbf{x} \mapsto a_{kl} \times \mathbf{x} + b_{kl}\), where vectors \(a_{kl} \parallel Oz\) and \(b_{kl} \parallel Oxy\), be a planar infinitesimal Euclidean congruence for a face \(p_{kl}\) such that $\overline{V_{ij}}=V(F_{ij})$ for all vertices $F_{ij}$ of \(p_{kl}\). Comparing the images of the common points of neighboring faces under the maps $V$, we obtain that \( a_{kl} \) forms a reciprocal-parallel net of \( F_{ij} \). This implies that all \(a_{kl}\) are equal and consequently all \(b_{kl}\) are.) 
Then $\overline{V_{ij}}$ is a trivial infinitesimal Euclidean isometric deformation. Then $\overline{V_{ij}}$ can be viewed as a trivial infinitesimal isotropic isometric deformation. Then $V_{ij}-\overline{V_{ij}}$ is the desired nontrivial infinitesimal isotropic isometric deformation.  
\end{proof}

If a nontrivial infinitesimal isotropic isometric deformation $V_{ij}$ of {a dual-convex} 
$m\times n$ net $F_{ij}$ consists of isotropic vectors, 
then the net $\overline{F_{ij}}+V_{ij}$ is called a \emph{velocity diagram} of the net $F_{ij}$. (This definition differs from the one in \cite{isometric-isotropic} by a rotation through $\pi/2$ about the $z$-axis.) By Lemma~\ref{l-isotropic-infinitesimal}, the net $F_{ij}$ has a velocity diagram if and only if $F_{ij}$ is infinitesimally flexible in $I^3$.

\fixskip

\begin{lemma} \label{l-flexible-mixed} A net is a velocity diagram of 
a dual-convex $m\times n$ net if and only if it is a non-planar v-parallel $m\times n$ net with vanishing mixed curvatures at the corresponding vertices.
\end{lemma}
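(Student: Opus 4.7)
The strategy is to translate every piece of the definition of an infinitesimal isotropic isometric deformation consisting of isotropic vectors into a geometric statement about the candidate velocity diagram $G := \overline{F}+V$, where $V$ denotes the collection $V_{ij}$. The converse direction then comes out automatically: any non-planar v-parallel $m\times n$ net $G$ with vanishing mixed curvatures recovers the deformation via $V := G-\overline{F}$. So it suffices to argue the equivalences in both directions simultaneously.

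I first analyze the face condition when $V_{ij}=(0,0,v_{ij})$ is isotropic. From the form~\eqref{eq-isotropic-congruence}, the only way an infinitesimal isotropic congruence can send the four (non-collinear) vertices of a face $p_{kl}$ to isotropic vectors is to have $\phi=0$ and $b_1=b_2=0$; the congruence then restricts to the face as $\mathbf{x}\mapsto(0,0,a_{kl}x+b_{kl}y+c_{kl})$. Thus the face condition amounts to saying that $v_{ij}$ restricts to a linear function of the top-view coordinates on each face, equivalently that $G$ has planar faces. Since $\overline{G_{ij}}=\overline{F_{ij}}$, the net $G$ is automatically v-parallel to $F$ and its faces, having the same (convex) top views as those of $F$, are convex. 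A deformation is trivial exactly when the face-wise linear pieces of $v$ all come from a single global linear function, that is, when $G$ lies in one plane, so ``nontrivial'' is equivalent to ``non-planar $G$''.

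It remains to reinterpret the vertex condition. For each of the four faces $p_s$ ($s=1,\dots,4$) around $F_{ij}$, with equation $z=\alpha_s x+\beta_s y+\gamma_s$, the linear piece $v=a_s x+b_s y+c_s$ on $p_s$ perturbs the plane to $z=(\alpha_s+ta_s)x+(\beta_s+tb_s)y+(\gamma_s+tc_s)$, so $\overline{p_s^*(t)}=\overline{p_s^*}+t(a_s,b_s)$. Moreover $(a_s,b_s)=\overline{q_s^*}$, where $q_s$ denotes the face of $G$ corresponding to $p_s$, since $q_s$ lies in the plane $z=a_s x+b_s y+c_s$. A short duality computation using the dual-plane equations of the two endpoints of the common edge of $p_s$ and $p_{s+1}$ shows that both $\overline{p_s^*}\,\overline{p_{s+1}^*}$ and $\overline{q_s^*}\,\overline{q_{s+1}^*}$ are perpendicular to the top view of that common edge (which coincides with the common edge of $q_s$ and $q_{s+1}$ because $G$ is v-parallel to $F$), hence they are parallel. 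By the very definition of mixed area,
\begin{equation*}
\tfrac{d}{dt}\left.\Omega(F_{ij}+tV_{ij})\right|_{t=0}=\text{mixed area of } \overline{p_1^*}\,\overline{p_2^*}\,\overline{p_3^*}\,\overline{p_4^*} \text{ and } \overline{q_1^*}\,\overline{q_2^*}\,\overline{q_3^*}\,\overline{q_4^*},
\end{equation*}
which is the mixed curvature at corresponding vertices of $F$ and $G$. Hence the vertex condition becomes the vanishing of this mixed curvature; boundary vertices are taken care of by the convention set in item \textbf{(V)}.

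The main technical obstacle is the parallelism step together with the need to invoke Lemma~\ref{l-dual-convex} to guarantee that both Gauss-image quadrilaterals are honest convex quadrilaterals traversed in the correct cyclic order, so that the mixed-area formula applies verbatim. Once this bookkeeping is in place, both directions of the lemma follow at once from the chain of equivalences ``face condition $\Leftrightarrow$ $G$ is a planar-faced v-parallel net'', ``nontrivial $\Leftrightarrow$ $G$ non-planar'', and ``vertex condition $\Leftrightarrow$ vanishing mixed curvatures''.
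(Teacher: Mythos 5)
Your proposal is correct and follows essentially the same route as the paper's proof: the face condition for isotropic vectors forces the congruence to be $\mathbf{x}\mapsto(0,0,c^1x+c^2y-c^3)$, which makes the velocity diagram a planar-faced v-parallel net whose face planes have duals $p^*_{kl}+tc^*_{kl}$, nontriviality corresponds to non-planarity, and the vertex condition becomes the vanishing of the mixed area of the dual quadrilaterals. The only cosmetic difference is that you verify the parallelism of corresponding sides of the two Gauss images by a direct perpendicularity computation, while the paper invokes the general fact that metric duality turns v-parallel nets into Combescure-parallel ones.
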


\begin{proof} 
Consider a dual-convex $m\times n$ net $F_{ij}$ with faces $p_{kl}$. Denote by $(F_{ij}^1,F_{ij}^2,F_{ij}^3)$ and $(p_{kl}^1,p_{kl}^2,p_{kl}^3)$ the coordinates of vertices $F_{ij}$ and $p_{kl}^*$ of the net and its metric dual. 

Assume that the net $F_{ij}$ has a velocity diagram. 
Let $V_{ij}$ be its nontrivial infinitesimal isotropic isometric deformation consisting of isotropic vectors. Let us show that the velocity diagram $\overline{F_{ij}}+V_{ij}$ is a non-planar v-parallel $m\times n$ net with vanishing mixed curvatures at the corresponding vertices.
Clearly, the velocity diagram is v-parallel. 

Let us show that the velocity diagram has planar faces. Consider a face $p_{kl}$ of the net $F_{ij}$. 
By the face condition in Definition~\ref{def-infinitesimal-flexibility}, there is an infinitesimal isotropic congruence $V$ such that $V_{ij}=V(F_{ij})$ for each $i\in\{k,k+1\}$ and $j\in\{l,l+1\}$. Since $V_{ij}\parallel Oz\nparallel p_{kl}$, by~\eqref{eq-isotropic-congruence} it follows that $V(x,y,z)=(0,0,c_{kl}^1 x+c_{kl}^2 y-c_{kl}^3)$ for some $c_{kl}^1,c_{kl}^2,c_{kl}^3\in\mathbb{R}$. This implies that the four points $\overline{F_{ij}}+V_{ij}$ lie in the plane $z=c_{kl}^1 x+c_{kl}^2 y-c_{kl}^3$ and hence the velocity diagram $\overline{F_{ij}}+V_{ij}$ has planar faces. Thus it is an $m\times n$ net, and the point $c^*_{kl}:=(c_{kl}^1,c_{kl}^2,c_{kl}^3)$ is metric dual to the plane of its face. The $m\times n$ net is non-planar, otherwise all $c^*_{kl}$ are equal and $V_{ij}$ is trivial.

Now we show that the mixed curvatures vanish at the corresponding vertices. From Lemma~\ref{l-affine} we know that $F_{ij}+tV_{ij}$ is a dual-convex $m\times n$ net for all sufficiently small $t$. Let us find its metric dual. The four points $F_{ij}+tV_{ij}$, where $i\in\{k,k+1\}$ and $j\in\{l,l+1\}$, lie in one plane $z=(p_{kl}^1+t c_{kl}^1)x+(p_{kl}^2+tc_{kl}^2) y-(p_{kl}^3+tc_{kl}^3)$. Hence the net $p^*_{kl}(t):=p^*_{kl}+tc^*_{kl}$ is metric dual to the net $F_{ij}+tV_{ij}$. Since the nets $F_{ij}+tV_{ij}$ are all $v$-parallel, it follows that the nets $p^*_{kl}(t)$ are parallel 
(i.e., Combescure transformations). In particular, the nets $c^*_{kl}$ and $p^*_{kl}$ are parallel. By the vertex condition of Definition~\ref{def-infinitesimal-flexibility},
$$\frac{d}{dt}\left.\mathrm{Area}\left(\overline{p^*_{ij}(t)}\,\overline{p^*_{i-1,j}(t)}\,\overline{p^*_{i-1,j-1}(t)}\,\overline{p^*_{i,j-1}(t)}\right)\right|_{t=0}=\frac{d}{dt}\left.\Omega(F_{ij}+tV_{ij})\right|_{t=0}=0.$$ Thus 
the mixed areas of the corresponding faces of the nets $p^*_{kl}$ and $c^*_{kl}$ vanish. Hence the mixed curvature of $F_{ij}$ and $\overline{F_{ij}}+V_{ij}$ vanishes.

Conversely, let $F'_{ij}$ be a non-planar $m\times n$ net v-parallel to 
a dual-convex $m\times n$ net $F_{ij}$ with vanishing mixed curvatures at the corresponding vertices. Then the metric dual net $c^*_{kl}=(c_{kl}^1,c_{kl}^2,c_{kl}^3)$ of $F'_{ij}$ is a nontrivial infinitesimal deformation of the net $p^*_{kl}$ (see definition \textbf{(D)} after  Proposition~\ref{prop-infinitesimal-characterization}). Set $V_{ij}:=F_{ij}'-\overline{F_{ij}}=(0,0,c_{kl}^1F_{ij}^1+c_{kl}^2F_{ij}^2-c_{kl}^3)$ for each $0\le i\le m$ and $0\le j\le n$, where $c_{kl}$ is any face containing the vertex $F'_{ij}$. Here the latter equality holds because 
the plane of the face $c_{kl}$ 
has the equation $z=c_{kl}^1x+c_{kl}^2y-c_{kl}^3$ so that $F_{ij}'=(F_{ij}^1,F_{ij}^2,c_{kl}^1F_{ij}^1+c_{kl}^2F_{ij}^2-c_{kl}^3)$ and 
$F_{ij}'-\overline{F_{ij}}=(0,0,c_{kl}^1F_{ij}^1+c_{kl}^2F_{ij}^2-c_{kl}^3)$. 
The vectors $V_{ij}$ satisfy the face condition because the infinitesimal isotropic congruence $V(x,y,z)=(0,0,c_{kl}^1 x+c_{kl}^2 y-c_{kl}^3)$ satisfies $V_{ij}=V(F_{ij})$ for each $i\in\{k,k+1\}$ and $j\in\{l,l+1\}$. The vectors $V_{ij}$ satisfy the vertex condition because the net $p_{kl}^*+tc_{kl}^*$ is metric dual to $F_{ij}+tV_{ij}$ and the vectors $c_{kl}^*$ form an infinitesimal deformation of $p_{kl}^*$. The vectors $V_{ij}$ form a nontrivial infinitesimal isotropic isometric deformation because not all points $c^*_{kl}:=(c_{kl}^1,c_{kl}^2,c_{kl}^3)$ coincide.
Thus $F_{ij}'$ is a velocity diagram of $F_{ij}$.
\end{proof}

To prove the equivalence \textbf{(V)}$\Leftrightarrow$\textbf{(D)}, we just apply the metric duality. We observe that the metric duality takes v-parallel nets to parallel nets. More precisely, a net $C_{kl}$ is parallel to the metric dual of a dual-convex $m\times n$ net $F_{ij}$ if and only if $C_{kl}$ is the metric dual of some $m\times n$ net that is v-parallel to $F_{ij}$. The following lemma is directly obtained from the definitions.

\fixskip
\begin{lemma} \label{l-mixed-deformable} 
A non-planar $m\times n$ net, v-parallel to a dual-convex $m\times n$ net, has vanishing mixed curvatures at the corresponding vertices if and only if 
the metric dual of the former is a nontrivial infinitesimal deformation
of the metric dual of the latter.
\end{lemma}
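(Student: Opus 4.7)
Let $p^*_{kl}$ and $q^*_{kl}$ denote the metric duals of the face planes of $F_{ij}$ and $F'_{ij}$. As in the proof of Lemma~\ref{l-flexible-mixed}, $p^*_{kl}+tq^*_{kl}$ is the metric dual of $F_{ij}+tV_{ij}$ with $V_{ij}:=F'_{ij}-\overline{F_{ij}}$. The plan is to translate the three defining clauses of ``$q^*$ is a nontrivial infinitesimal deformation of $p^*$'' (the Combescure property, vanishing of the area derivative, and nontriviality) through metric duality into conditions on $F,F'$, showing that two of them are already forced by the hypotheses and the third is exactly the mixed-curvature condition.

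First I would verify that the Combescure condition is automatic from v-parallelism. Since $F'$ and $F$ share top views, the dual planes $F^*_{ij}$ and $(F'_{ij})^*$ have the common normal $(F_{ij}^1,F_{ij}^2,-1)$ and are therefore parallel. For any two faces $p_{kl},p_{k'l'}$ of $F$ sharing an edge $F_{ij}F_{i'j'}$, the points $p^*_{kl},p^*_{k'l'}$ lie on $F^*_{ij}\cap F^*_{i'j'}$, whereas $q^*_{kl},q^*_{k'l'}$ lie on the parallel line $(F'_{ij})^*\cap (F'_{i'j'})^*$; hence corresponding edges of $p^*$ and $q^*$ are parallel, so $p^*+tq^*$ is a Combescure transform of $p^*$ for every $t$.

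Next I would match the vanishing-area-derivative condition with the vanishing of mixed curvatures. The four metric-dual points around a non-boundary vertex $F_{ij}$ all lie in the plane $F^*_{ij}$ (since $F_{ij}$ is common to all four surrounding faces), and analogously the $q^*$-quadrilateral lies in the parallel plane $(F'_{ij})^*$. So the face of $p^*+tq^*$ at $F_{ij}$ remains planar for every $t$ in a plane parallel to $F^*_{ij}$, and its Euclidean area is proportional to its top-view area by a $t$-independent factor. That top-view area equals $\Omega(F_{ij}+tV_{ij})$, and its $t$-derivative at $0$ is, by the mixed-area definition in clause~(V), exactly the mixed curvature of $F_{ij}$ and $F'_{ij}$. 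Thus the zero-area-derivative clause of~(D) at every interior index is equivalent to vanishing of all mixed curvatures. Finally, all $q^*_{kl}$ coincide iff all face planes of $F'$ agree iff $F'$ is planar, so nontriviality of the deformation matches the non-planarity hypothesis. Combining the three observations yields the stated equivalence; the only subtle point is the compatibility of the area convention in~(D) with the top-view area of $\Omega$, which is handled by the planarity of the dual faces in the parallel planes $F^*_{ij}$ and $(F'_{ij})^*$.
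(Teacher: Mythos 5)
Your proof is correct and matches the paper's treatment: the paper states that Lemma~\ref{l-mixed-deformable} ``is directly obtained from the definitions'' after observing that metric duality takes v-parallel nets to parallel (Combescure) nets, and your write-up is exactly the detailed unpacking of that claim — duality of parallel point-planes gives the Combescure property, the faces of the dual net around $F_{ij}$ lying in planes parallel to $F_{ij}^*$ reconciles the area convention of \textbf{(D)} with the top-view mixed area defining the mixed curvature, and non-planarity corresponds to nontriviality. The only detail left implicit is that the faces of $p^*+tq^*$ stay convex for small $t$ (by continuity, using Lemma~\ref{l-dual-convex}), which is at the same level of granularity the paper itself omits.
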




\fixskip
To prove the equivalence \textbf{(D)}$\Leftrightarrow$\textbf{(K)}, let us recall the characterization of infinitesimally deformable $m\times n$ nets from~\cite[Proposition~37]{isometric-isotropic}.

\fixskip
\begin{lemma} \label{l-infinitesimal-deformable}
An $(m-1)\times (n-1)$ net is infinitesimally deformable if and only if it is a K\oe nings net. Its nontrivial infinitesimal deformations 
are exactly Christoffel dual $(m-1)\times (n-1)$~nets.
\end{lemma}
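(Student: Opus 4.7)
The plan is to translate the two conditions in \textbf{(D)} into geometric conditions on each face, and then invoke the classical planar characterization of Christoffel duality \textbf{(K)} face by face.

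First I would unpack the Combescure condition for all small $t$. Each edge vector of $P_{ij}(t)$ has the affine form $(P_{i+1,j}-P_{ij})+t(C_{i+1,j}-C_{ij})$, so remaining parallel to $P_{i+1,j}-P_{ij}$ for every small $t$ is equivalent to the linear-in-$t$ term being parallel as well, i.e.\ $(C_{i+1,j}-C_{ij})\parallel(P_{i+1,j}-P_{ij})$, and analogously in the other family. Hence a Combescure infinitesimal deformation is precisely an assignment of vectors $C_{ij}$ whose consecutive differences are parallel to the corresponding edges of $P_{ij}$; on every face the quadrilateral spanned by $C_{ij},C_{i+1,j},C_{i+1,j+1},C_{i,j+1}$ has corresponding sides parallel to those of the $P$-face.

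Next I would analyse the area-derivative condition face by face. For fixed edge directions the oriented area is a quadratic polynomial
\begin{equation*}
\mathrm{Area}(P(t))=\mathrm{Area}(P)+2t\,\mathrm{MA}(P,C)+t^{2}\,\mathrm{Area}(C),
\end{equation*}
so vanishing of its $t$-derivative at $t=0$ is the same as $\mathrm{MA}(P,C)=0$ on every face, where $\mathrm{MA}$ is the mixed area, a symmetric bilinear form on the affine space of quadrilaterals with prescribed edge directions. The key planar lemma, which I would verify directly by writing the oriented area of a convex quadrilateral in the diagonal form $\tfrac{1}{2}\,\overrightarrow{AC}\times\overrightarrow{BD}$ and polarising, is that for two quadrilaterals with pairwise parallel corresponding sides the mixed area vanishes if and only if the non-corresponding diagonals are parallel, i.e.\ $\overrightarrow{AC}\parallel\overrightarrow{B'D'}$ (equivalently $\overrightarrow{BD}\parallel\overrightarrow{A'C'}$). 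This is exactly the dual-quadrilateral condition appearing in \textbf{(K)}.

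Combining the two reductions face by face, a family $C_{ij}$ is an infinitesimal deformation of $P_{ij}$ in the sense of \textbf{(D)} if and only if every face of the $C$-net is dual to the corresponding face of the $P$-net, i.e.\ $C_{ij}$ is a Christoffel dual of $P_{ij}$; existence of such a dual is by definition the K{\oe}nigs property, and the case of all $C_{ij}$ equal corresponds to the degenerate (collapsed) dual, so nontrivial deformations match exactly the honest Christoffel duals. The main obstacle is the planar mixed-area identity for parallel quadrilaterals; once that short bilinear-form computation is in place, the rest is direct bookkeeping, matching the argument in \cite[Proposition~37]{isometric-isotropic}.
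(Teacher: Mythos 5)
Your proposal follows essentially the same route as the paper: reduce both conditions in \textbf{(D)} to a face-by-face statement and identify the vanishing of the mixed area of two parallel quadrilaterals with the dual-quadrilateral condition of \textbf{(K)}; the only difference is that the paper cites this key equivalence from Bobenko et al.\ (Theorem~13 and Eq.~(3) of the reference on curvatures of polyhedral surfaces) while you propose to derive it by polarising the diagonal area formula, which is a legitimate way to fill in that citation. The one detail you gloss over, which the paper handles in a parenthetical, is that the dual face is convex (so that $C_{ij}$ is a net in the paper's sense) and that a single collapsed face forces the whole net $C_{ij}$ to collapse, so that ``not all vectors equal'' really does yield an honest Christoffel dual.
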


\begin{proof}
    Let vectors $C_{ij}$, 
    which do not all coincide, form an infinitesimal deformation of an $(m-1)\times (n-1)$ net $P_{ij}$. 
    Set $P_{ij}(t):=P_{ij}+tC_{ij}$. The quadrilaterals $P_{ij}(t)P_{i+1,j}(t)P_{i+1,j+1}(t)P_{i,j+1}(t)$ have parallel sides and the derivative of the area at $t=0$ vanishes. By a known computation \cite[Theorem~13 and Eq.~(3)]{bobenko-2010-curv}, this is equivalent to quadrilaterals
    $P_{ij}P_{i+1,j}P_{i+1,j+1}P_{i,j+1}$ and $C_{ij}C_{i+1,j}C_{i+1,j+1}C_{i,j+1}$ being dual. 
    The latter quadrilateral is convex because the former is (and cannot degenerate into a point, unless the whole net $C_{ij}$ does). Thus $P_{ij}$ and $C_{ij}$ are Christoffel dual $(m-1)\times (n-1)$ nets unless all vectors $C_{ij}$ are equal.
\end{proof}

In particular, any infinitesimal deformation with not all vectors equal has convex faces, 
thus by Lemmas~\ref{l-affine}, \ref{l-flexible-mixed}, and \ref{l-mixed-deformable}, any velocity diagram is 
a dual-convex $m\times n$ net.



To prove the equivalence \textbf{(K)}$\Leftrightarrow$\textbf{(R)}, we need the following property of planar Christoffel dual nets, with not necessarily convex faces. For such nets, the Christoffel duality is defined analogously to the above. Clearly, if one of two nontrivial Christoffel dual nets has convex faces, then the second one does.

\fixskip
\begin{lemma} \label{l-height-function}
    Two parallel 
    nets ${p}_{kl}$ and ${c}_{kl}$ in the plane are Christoffel dual if and only if there is an indexed collection of real numbers $h_{kl}$ such that for any adjacent vertices ${c}_{kl}$ and ${c}_{k'l'}$ we have 
    \begin{equation}\label{eq-height-function}
       h_{kl}-h_{k'l'}=(c_{kl}^1-c_{k'l'}^1)p_{kl}^2-(c_{kl}^2-c_{k'l'}^2)p_{kl}^1.
    \end{equation}
\end{lemma}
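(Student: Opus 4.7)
The plan is to read \eqref{eq-height-function} as the statement that $h$ is a discrete potential on the grid for the edge-valued function
\[
\omega(c_{kl},c_{k'l'}) \;:=\; \det\!\bigl(c_{kl}-c_{k'l'},\,p_{kl}\bigr),
\qquad \det\!\bigl((a,b),(c,d)\bigr):=ad-bc.
\]
Before anything else one must check that $\omega$ is a well-defined antisymmetric edge function: swapping $(kl)$ and $(k'l')$ should negate the right-hand side, and a one-line expansion reduces this requirement to $\det(c_{kl}-c_{k'l'},\,p_{kl}-p_{k'l'})=0$, i.e.\ the parallelism of the corresponding edges, which is granted by hypothesis. Under this antisymmetry, \eqref{eq-height-function} says exactly that $\omega$ is the discrete differential of $h$, so, since the combinatorial grid underlying an $(m-1)\times(n-1)$ net is simply connected, the potential $h$ exists if and only if the sum of $\omega$ along the boundary of every face vanishes (standard path-integral argument: any loop in the grid decomposes into a sum of face boundaries).

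The heart of the proof is the face-cycle computation. Let $A,B,C,D$ denote the consecutive $c$-vertices of one face and $P,Q,R,S$ the corresponding $p$-vertices. The sum of $\omega$ around the boundary is
\[
\det(A-B,P)+\det(B-C,Q)+\det(C-D,R)+\det(D-A,S).
\]
Using $\det(A-B,P)=\det(A-B,Q)$ and $\det(C-D,R)=\det(C-D,S)$ (each by edge parallelism) and telescoping via bilinearity collapses the sum to $\det(A-C,\,Q-S)$; regrouping the other two pairs of opposite edges (with $\det(B-C,Q)=\det(B-C,R)$ and $\det(D-A,S)=\det(D-A,P)$) collapses it instead to $\det(B-D,\,R-P)$. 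Both expressions compute the same cycle sum, so closure at a face is simultaneously equivalent to $AC\parallel QS$ and to $BD\parallel PR$, i.e.\ to the two non-corresponding-diagonal parallelism conditions from the definition of Christoffel duality.

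Putting the pieces together: the nets $p_{kl}$ and $c_{kl}$ are Christoffel dual iff every face pair is dual iff the cycle sum of $\omega$ vanishes on every face iff $h$ exists. The main obstacle I expect is the bookkeeping in the face sum — keeping signs and index orientations straight and noticing the telescoping that collapses four determinants to one $2{\times}2$ determinant of a $c$-diagonal against a non-corresponding $p$-diagonal. After that, the equivalence of the two diagonal conditions to each other falls out automatically from the two collapsings computing the same number, and the rest is the standard closed-equals-exact duality on a simply connected $2$-complex.
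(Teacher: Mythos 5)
Your proposal is correct and follows essentially the same route as the paper: existence of $h_{kl}$ is reduced to the vanishing of the edge-function sum around each face, and that sum telescopes (via the parallelism identity $\det(c_i-c_{i+1},p_i)=\det(c_i-c_{i+1},p_{i+1})$) to $\det(c_1-c_3,\,p_2-p_4)$, whose vanishing is exactly the non-corresponding-diagonal condition of Christoffel duality. Your additional observation that the alternative regrouping yields $\det(B-D,R-P)$, so the two diagonal conditions are automatically equivalent, is a nice touch the paper leaves implicit.
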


\fixskip
Notice that the collection $h_{kl}$ is determined by
~\eqref{eq-height-function} uniquely up to a 
constant.

\begin{proof}
    Consider a face of the net ${c}_{kl}$. 
    Denote by ${c}_1,{c}_2,{c}_3,{c}_4$    
    its four consecutive vertices. Set ${c}_{5}:={c}_{1}$ and define ${p}_1,p_{2}\dots,p_5=p_1$ analogously.
    Since the nets ${c}_{kl}$ and ${p}_{kl}$ are parallel, the right side of~\eqref{eq-height-function} 
    can be written in two ways:
    $$
    \det( c_i- c_{i+1}, p_{i})
    = \det( c_i- c_{i+1}, p_{i+1})
    $$
    for suitable $i$. The existence of the desired collection $h_{kl}$ is equivalent to the vanishing~of
    \begin{multline*}
        \sum_{i=1}^{4}\det( c_{i}- c_{i+1}, p_{i})
        =
        \det( c_{1}- c_{2}, p_{2})+ \det( c_{2}- c_{3}, p_{2})
        +\det( c_{3}- c_{4}, p_{4})+\\+ \det( c_{4}- c_{1}, p_{4})
        =\det( c_{1}- c_{3}, p_{2}-p_{4}).
    \end{multline*}
    The vanishing of the latter is equivalent to the Christoffel duality.
\end{proof}

Now we construct the reciprocal-parallel net explicitly.

\fixskip
\begin{lemma} \label{l-deformable-reciprocal}
Let $F_{ij}$ be a dual-convex $m\times n$ net and $p^*_{kl}$ be its metric dual. 
A net $C_{kl}$ 
is reciprocal-parallel to the $m\times n$ net~$F_{ij}$
if and only if $C_{kl}=(-c_{kl}^2,c_{kl}^1,h_{kl})$, where $c^*_{kl}=(c_{kl}^1,c_{kl}^2,c_{kl}^3)$ is a (nontrivial)  Christoffel dual to 
$p^*_{kl}$, and $h_{kl}$ is given by~\eqref{eq-height-function}.
\end{lemma}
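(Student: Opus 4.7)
My plan is to translate the reciprocal-parallel condition into an explicit algebraic system in coordinates, recognize the resulting conditions as $2$D Christoffel duality via Lemma~\ref{l-height-function}, and lift to a genuine $3$D Christoffel dual using the planarity of the faces of the metric dual $p^*$.

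First I would write $p^*_{kl}=(p^1_{kl},p^2_{kl},p^3_{kl})$, so that the plane of the face $p_{kl}$ has equation $z=p^1_{kl}x+p^2_{kl}y-p^3_{kl}$. For $0<j<n$, the edge $F_{ij}F_{i+1,j}$ is the intersection of the adjacent face planes $p_{i,j-1}$ and $p_{i,j}$, so substituting into both equations gives that its direction is proportional to
\[
\vec v^k_{ij}:=\bigl(-(p^2_{i,j}-p^2_{i,j-1}),\;p^1_{i,j}-p^1_{i,j-1},\;p^1_{i,j}p^2_{i,j-1}-p^1_{i,j-1}p^2_{i,j}\bigr),
\]
with an analogous $\vec v^l_{ij}$ for $F_{ij}F_{i,j+1}$. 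With the ansatz $C_{kl}=(-c^2_{kl},c^1_{kl},h_{kl})$, the parallelism $C_{i,j-1}C_{ij}\parallel F_{i+1,j}F_{ij}$ amounts to the existence of a scalar $\lambda$ (varying edge by edge) with
\begin{align*}
c^1_{i,j}-c^1_{i,j-1}&=\lambda\,(p^1_{i,j}-p^1_{i,j-1}),\\
c^2_{i,j}-c^2_{i,j-1}&=\lambda\,(p^2_{i,j}-p^2_{i,j-1}),\\
h_{i,j}-h_{i,j-1}&=\lambda\,(p^1_{i,j}p^2_{i,j-1}-p^1_{i,j-1}p^2_{i,j}).
\end{align*}
A one-line manipulation (adding and subtracting $\lambda p^1_{i,j}p^2_{i,j}$) shows that, under the first two lines, the third is exactly the edge instance of~\eqref{eq-height-function}. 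Applying the identical argument in the other family, I conclude: $C$ is reciprocal-parallel to $F$ iff the top views of $c^*$ and $p^*$ are Combescure in both families of edges and $h$ satisfies~\eqref{eq-height-function} across every edge.

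By Lemma~\ref{l-height-function}, this is equivalent to $(c^1_{kl},c^2_{kl})$ being a planar Christoffel dual of the top view of $p^*_{kl}$ with associated height function $h$. To upgrade to a $3$D Christoffel dual $c^*_{kl}$ of the $3$D net $p^*_{kl}$, I would define $c^3$ edge by edge through $c^3_{kl}-c^3_{k'l'}:=\lambda\,(p^3_{kl}-p^3_{k'l'})$ using the same Combescure ratios. The definition closes around each face of $p^*$ because every such face lies in the single plane $F^*_{k+1,l+1}$ dual to the common corner $F_{k+1,l+1}$ of the four face-planes $p_{k,l},p_{k+1,l},p_{k,l+1},p_{k+1,l+1}$; hence on that face $p^3=Ap^1+Bp^2+D$ for some constants, and the $\lambda$-weighted alternating sum of $p^3$ vanishes by the cycle closure of $c^1,c^2$. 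The same identity forces the resulting face of $c^*$ to be coplanar with the same $A,B$, so the non-corresponding-diagonal parallelism in $3$D follows from that in the top view. Conversely, the top view of any $3$D Christoffel dual supplies $(c^1,c^2)$, and~\eqref{eq-height-function} produces $h$, recovering $C=(-c^2,c^1,h)$. Nontriviality propagates directly through~\eqref{eq-height-function}: constant $c^1,c^2$ force constant $h$ (hence trivial $C$), and conversely a trivial $C$ forces $(c^1,c^2)$ constant and then $c^3$ constant by the Combescure relation.

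The main technical hurdle is the short algebraic identity bridging the third coordinate of $\vec v^k_{ij}$ with the right-hand side of~\eqref{eq-height-function}; once this bridge between the $3$D direction of an $F$-edge and the $2$D height-function formula is in place, the rest is an assembly of Lemma~\ref{l-height-function} with the metric-dual planarity of $p^*$-faces.
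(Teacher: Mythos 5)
Your proposal is correct and follows essentially the same route as the paper: both translate the reciprocal-parallel condition into top-view parallelism of the $90^\circ$-rotated net with $\overline{p^*_{kl}}$ plus the height relation~\eqref{eq-height-function}, invoke Lemma~\ref{l-height-function} to identify this with planar Christoffel duality, and then lift to a $3$D Christoffel dual using the planarity of the faces of $p^*$. The only differences are presentational — you derive the edge directions via cross products of the face-plane normals where the paper uses the incidence relations directly, and you spell out the well-definedness of the lift in more detail than the paper's one-line assertion.
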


\begin{proof}
Denote $F_{ij}=:(F_{ij}^1,F_{ij}^2,F_{ij}^3)$, $p^*_{kl}=:(p_{kl}^1,p_{kl}^2,p_{kl}^3)$, and $C_{kl}=:(C_{kl}^1,C_{kl}^2,C_{kl}^3)$.  
Since $F_{ij}$ and $F_{i+1,j}$ are common vertices of the faces $p_{ij}$ and $p_{i,j-1}$, it follows that 
$$\Scale[0.89]{p_{i,j-1}^1(F_{i+1,j}^1-F_{ij}^1)+p_{i,j-1}^2(F_{i+1,j}^2-F_{ij}^2)=F_{i+1,j}^3-F_{ij}^3=
p_{ij}^1(F_{i+1,j}^1-F_{ij}^1)+p_{ij}^2(F_{i+1,j}^2-F_{ij}^2).}$$ 
The latter implies $\overline{F_{i+1,j}}-\overline{F_{ij}}\perp \overline{p^*_{ij}}-\overline{p^*_{i,j-1}}$.
Thus $C_{i,j-1}-C_{ij}\parallel F_{i+1,j}-F_{ij}$
if and only if $\overline{C_{i,j-1}}-\overline{C_{ij}}\perp \overline{p^*_{ij}}-\overline{p^*_{i,j-1}}$ and 
$
C_{ij}^3-C_{i,j-1}^3=
p_{ij}^1(C_{ij}^1-C_{i,j-1}^1)+p_{ij}^2(C_{ij}^2-C_{i,j-1}^2)$.
The former condition (and the one with the indices swapped) means that the planar net $\overline{c^*_{ij}}:=(C_{ij}^2,-C_{ij}^1)$ is parallel to the net $\overline{p^*_{ij}}$, and the latter condition means that 
$h_{ij}:=C_{ij}^3$ satisfies equation~\eqref{eq-height-function}. By Lemma~\ref{l-height-function}, the two conditions together are equivalent to $\overline{c^*_{ij}}$ and $\overline{p^*_{ij}}$ being Christoffel dual. In the latter case, there is a unique (up to translation) net $c^*_{ij}$ with the top view $\overline{c^*_{ij}}$ such that $c^*_{ij}$ and $p^*_{ij}$ are Christoffel dual (parallelism of faces implies their duality in this case). So, the net $C_{kl}$ is reciprocal-parallel to the $m\times n$ net~$F_{ij}$ if and only if it has the form $C_{kl}=(-c_{kl}^2,c_{kl}^1,h_{kl})$.
\end{proof}


\begin{corollary} \label{p-reciprocal-parallel-top-view}
(Cf.~\cite[Lemma~62]{isometric-isotropic})
   For a 
   dual-convex $m\times n$ net, 
   the top views of the reciprocal-parallel net (if it exists) and the metric dual net are related by 
   a rotation through $\pi/2$ 
   followed by the Christoffel duality.
\end{corollary}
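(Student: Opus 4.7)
The plan is to read the claim directly off of Lemma~\ref{l-deformable-reciprocal}, which supplies the explicit formula $C_{kl} = (-c_{kl}^2,\, c_{kl}^1,\, h_{kl})$, with $c^*_{kl} = (c_{kl}^1, c_{kl}^2, c_{kl}^3)$ a (nontrivial) Christoffel dual of the metric dual net $p^*_{kl}$.

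First I would take top views: the top view of the reciprocal-parallel net is $\overline{C_{kl}} = (-c_{kl}^2, c_{kl}^1)$, and the top view of $c^*_{kl}$ is $\overline{c^*_{kl}} = (c_{kl}^1, c_{kl}^2)$. The map $(x,y) \mapsto (-y, x)$, which sends $\overline{c^*_{kl}}$ to $\overline{C_{kl}}$, is precisely the counterclockwise rotation $R_{\pi/2}$ about the origin in the $xy$-plane.

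Next, since $c^*_{kl}$ and $p^*_{kl}$ are Christoffel dual as spatial nets, every parallelism of corresponding sides and of non-corresponding diagonals of their faces passes to the top views. Therefore $\overline{c^*_{kl}}$ is Christoffel dual to $\overline{p^*_{kl}}$ as planar nets. Combining the two observations, $\overline{C_{kl}} = R_{\pi/2}\bigl(\text{Christoffel dual of } \overline{p^*_{kl}}\bigr)$, which is the claim. The order ``rotation followed by Christoffel duality'' is equivalent to ``Christoffel duality followed by rotation'' because $R_{\pi/2}$ preserves parallelism: applying $R_{\pi/2}$ to a pair of Christoffel dual planar nets yields another such pair, so $R_{\pi/2}$ and the Christoffel duality operation commute.

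The only subtlety is checking that planar Christoffel duality really is inherited from the spatial duality under projection, and verifying the stated commutativity; both are immediate from the parallelism-based definition used throughout the paper. No serious obstacle arises — the corollary is essentially the statement of Lemma~\ref{l-deformable-reciprocal} stripped of its third coordinate.
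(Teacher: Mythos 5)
Your proposal is correct and matches the paper's route: the corollary is stated as an immediate consequence of Lemma~\ref{l-deformable-reciprocal}, and your argument is exactly that reading-off, including the (valid) observation that the $\pi/2$-rotation commutes with Christoffel duality since the duality is defined purely by parallelism conditions. No gap.
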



\fixskip
Finally, we summarize the proof.

\begin{proof}[Proof of Proposition~\ref{prop-infinitesimal-characterization}] 
The equivalences \textbf{(I)}$\Leftrightarrow$\textbf{(V)}$\Leftrightarrow$\textbf{(D)}$\Leftrightarrow $\textbf{(K)}$\Leftrightarrow $\textbf{(R)}$\Leftrightarrow $\textbf{(E)} were proved in Lemmas~\ref{l-flexible-mixed}, \ref{l-mixed-deformable}, \ref{l-infinitesimal-deformable}, \ref{l-deformable-reciprocal}, and \cite[pp. 172-174]{sauer:1970} respectively. 
\end{proof}


\section{Finite flexibility} 
\label{sec-finite}

Now we turn to finite flexibility. Again, we impose an additional \emph{vertex condition}, which is automatic in Euclidean geometry but becomes crucial in isotropic geometry.

\subsection{Definition} 

\begin{definition} \label{def-flexibility}
An \emph{isotropic isometric deformation} of a dual-convex $m\times n$ net $F_{ij}$
is a continuous family of $m\times n$ nets $F_{ij}(t)$, where 
$t\in [0;1]$
and $F_{ij}(0)=F_{ij}$, satisfying 
\fixskip
\begin{description}
    \item[\textbf{face condition:}] corresponding faces of all $m\times n$ nets $F_{ij}(t)$ are isotropically congruent; and
    \item[\textbf{vertex condition:}] corresponding non-boundary vertices of all 
    nets $F_{ij}(t)$ have the same curvatures. 
\end{description}
\fixskip
An isotropic isometric deformation is \emph{trivial} if for each 
$t\in [0;1]$
there is an isotropic congruence $C_t$ such that $F_{ij}(t)=C_t(F_{ij})$ for all $0\leq i\leq m,0\leq j\leq n$. 
A dual-convex $m\times n$ net is \emph{flexible} in $I^3$ if it has a nontrivial isotropic isometric deformation.
\end{definition}
\fixskip

We may assume without loss of generality that the isotropic congruence in the face condition preserves the indices of the vertices. 
Also, note that two 
quadrilaterals with the same top view (but not in one isotropic plane) are always isotropically congruent.

\fixskip
\begin{example} \label{ex-2x2} (See Figure~\ref{fig:2x2-isot-def}.) The dual-convex $2\times2$ net with the vertices $${F_{ij}}=(i,j,i\!\!\!\! \mod 2+j\!\!\!\!\mod 2), \qquad
0\leq i,j\leq2,$$ is flexible in $I^3$.
A nontrivial isotropic isometric deformation is given by 
\begin{equation*}
    F_{ij}(t)=
    \left(i,j,(i\!\!\!\!\mod2)\dfrac{1}{{t+1}}+(j\!\!\!\!\mod2)(t+1)\right),        \qquad 0\leq i,j\leq 2, \quad t\in[0,1].
\end{equation*}
\end{example}

\begin{figure}[h]
    \centering
    \includegraphics[scale=0.14]{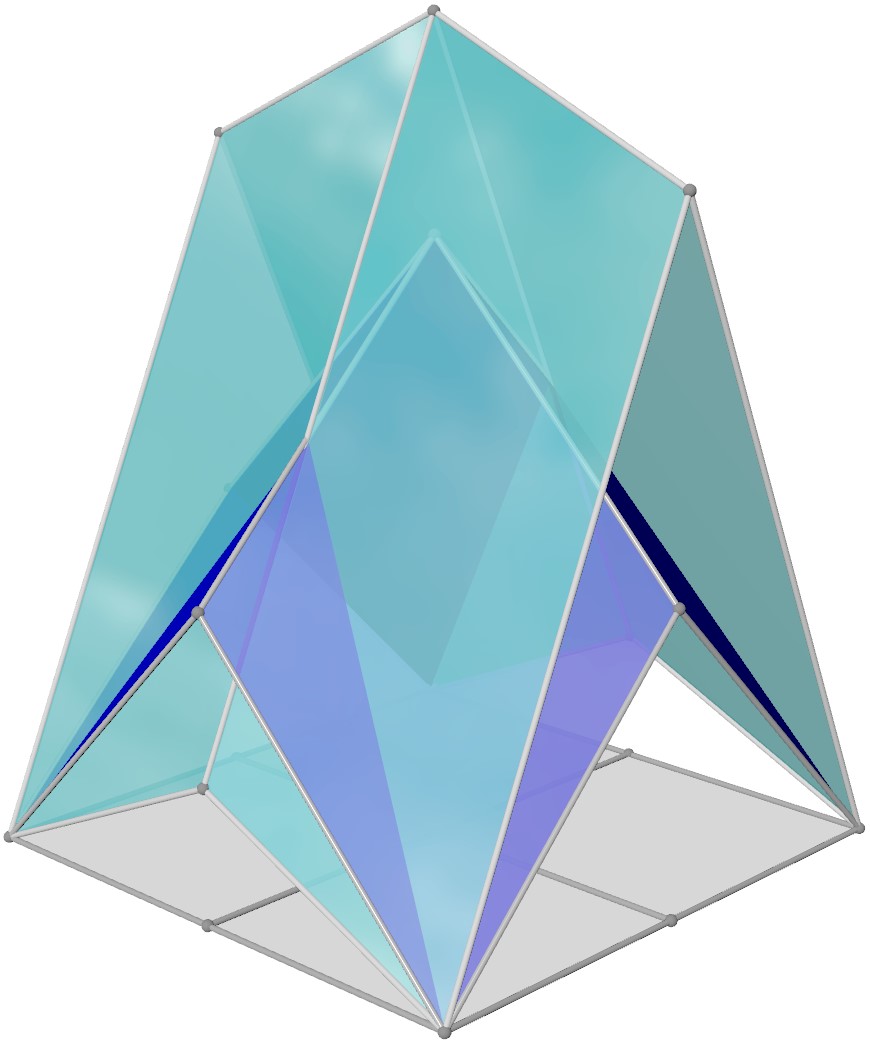}
    \caption{A dual-convex $2\times2$ net that is flexible in $I^3$ (blue). One position of its isotropic isometric deformation (cyan) and their common top view (gray).
    See 
    Example~\ref{ex-2x2}.}
    \label{fig:2x2-isot-def}
\end{figure}

\subsection{Statement of the classification}
\label{ssec-finite-statements}

To characterize all flexible $m\times n$ nets, we need the following notions. 

By an $a\times b$ \emph{sub-net} of a given $m\times n$ net with the points $F_{ij}$, where $0\leq i\leq m$ and $0\leq j\leq n$, we mean an $a\times b$ net with the points $F_{ij}$, where $q\leq i\leq q+a$ and $r\leq j\leq r+b$, for some integers $0\leq q\leq m-a$ and $0\leq r\leq n-b$. (The collection of $(a+1)(b+1)$ points indexed in this way is still viewed as an $a\times b$ net.)

Let $F_{ij}$ be a non-boundary vertex of a dual-convex $m\times n$ net and 
let $e$ be an edge emanating from the vertex. See Figure~\ref{fig:opposite-ratio}. 
Enumerate the consecutive faces $p_1,p_2,p_3,p_4$ around the vertex so that $p_1\cap p_4=e$. 
Let $p$ be the plane spanned by the lines $p_1\cap p_3$ and $p_2\cap p_4$.
The ratio of isotropic angles (see Section~\ref{ssec:isotropic} for the definition)
$$\frac{\angle(p_1,p)}{\angle(p_3,p)}\cdot\frac{\angle(p_4,p)}{\angle(p_2,p)}$$
is called the \emph{opposite ratio of the vertex $F_{ij}$ with respect to the edge $e$}. Clearly, it depends only on the $4$-hedral angle spanned by the faces. 

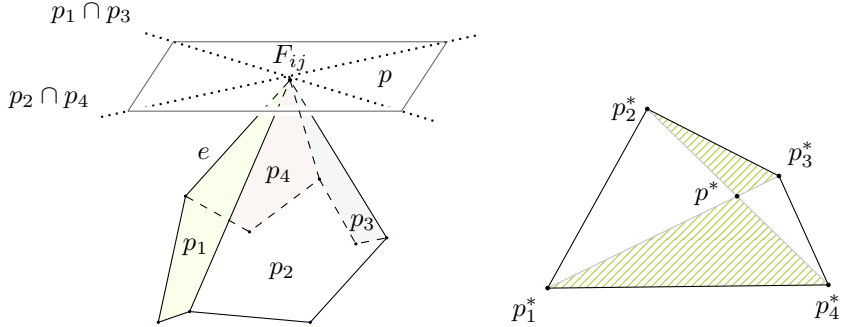
\begin{figure}[h]
    \centering
    \begin{tikzpicture}[scale=0.40]
        \coordinate (A) at (-3.66577, 2.03);
        \coordinate (B) at (-7.10198, -1.81635);
        \coordinate (C) at (-6.96776, -5.64182);
        \coordinate (D) at (-0.4846, -3.19889);
        \coordinate (E) at (-2.7, -1.2526);
        \coordinate (K) at (-5, -3);
        \coordinate (L) at (-8, -6);
        \coordinate (M) at (-3, -6);
        \coordinate (N) at (-1.5, -3.4);
        \coordinate (P) at (-9, 1);
        \coordinate (Q) at (-7.5, 3.3);
        \coordinate (R) at (1.5, 3.3);
        \coordinate (S) at (0, 1);
    
        \def\t{1.14} 
        \def\s{1.09} 
    
        \coordinate (P') at ($(P)!\s!(R)$);
        \coordinate (R') at ($(R)!\s!(P)$);
        \coordinate (S') at ($(S)!\t!(Q)$);
        \coordinate (Q') at ($(Q)!\t!(S)$);
    
        \filldraw[black] (R') node[anchor=south east]{$p_2\cap p_4$};
        \filldraw[black] (S') node[anchor=south east]{$p_1\cap p_3$};
    
        \draw[black, dotted, thick] (P') -- (R');
        \draw[black, dotted, thick] (Q') -- (S');
    
        \fill[britishracinggreen!4] (A) -- (E) -- (N) -- (D) -- cycle;
        \fill[antiquebrass!8] (A) -- (B) -- (K) -- (E) -- cycle;
        \fill[lime!8] (A) -- (B) -- (L) -- (C) -- cycle;
        
        \path[name path=P--S] (P) -- (S);
        \path[name path=A--B] (A) -- (B);
        \path[name path=A--C] (A) -- (C);
        \path[name path=A--D] (A) -- (D);
        
        \path[name intersections={of=P--S and A--B, by={PS-AB}}];
        \path[name intersections={of=P--S and A--C, by={PS-AC}}];
        \path[name intersections={of=P--S and A--D, by={PS-AD}}];
    
        \draw[black, thin] (PS-AB) -- (B) -- (L) -- (C) -- (M) -- (D) -- (PS-AD);
        \draw[black, thin, dashed] (PS-AB) -- (A) -- (PS-AD);
        \draw[black, thin, dashed] (PS-AC) -- (A);
        \draw[black, thin] (PS-AC) -- (C);
        \draw[white, line width=3.5pt] (P) -- (S);
        \draw[black!60, thin] (P) -- (Q) -- (R) -- (S) -- (P);
    
        \draw[black, dashed] (A) -- (E) -- (K) -- (B);
        \draw[black, dashed] (E) -- (N) -- (D);
        
        \filldraw[black] (A) circle (1.5pt) node[anchor=south]{$F_{ij}$};
        \draw[black] (-3.3, -1.7) node[anchor=south east]{$p_4$};
        \draw[black] (-7.5, -3.5) node[anchor=west]{$p_1$};
        \draw[black] (-3.2, -4.3) node[anchor=east]{$p_2$};
        \draw[black] (-2.0, -2.7) node[anchor=west]{$p_3$};
        \draw[black] (0, 1.5) node[anchor=south east]{$p$};
        \draw[black] (-6.5,-0.9) node[anchor=south]{$e$};

        \foreach \name in {B,C,D,E,K,L,M,N} {
            \filldraw[fill=blue] (\name) circle (1pt) node[]{};
        }
    
    \end{tikzpicture}\quad
    \begin{tikzpicture}[scale=1.34]
        \coordinate (A) at (-3.82, -0.74);
        \coordinate (D) at (-2.8416, 1.03277);
        \coordinate (S) at (-2.29447, 2.02757);
        \coordinate (B) at (-1.06341, -0.70812);
        \coordinate (R) at (0.50339, -0.68325);
        
        \path[name path=line 1] (S) -- (B);
        \path[name path=line 2] (D) -- (R);
        \fill[name intersections={of=line 1 and line 2, by=C}];
        
        \path[name path=line 3] (D) -- (B);
        \path[name path=line 4] (A) -- (C);
        \fill[name intersections={of=line 3 and line 4, by=Q}];
        
        \fill[pattern=north east lines, pattern color=applegreen!70] (D) -- (Q) -- (C) -- cycle;
        \fill[pattern=north east lines, pattern color=applegreen!70] (A) -- (Q) -- (B) -- cycle;
        
        \draw[black] (D) -- (A) -- (B) -- (C) -- (D);
        \draw[gray!50, very thin] (D) -- (B);
        \draw[gray!50, very thin] (A) -- (C);
        
        \filldraw[black] (A) circle (0.5pt) node[anchor=north east]{$p_1^*$};
        \filldraw[black] (B) circle (0.5pt) node[anchor=north]{$p_4^*$};
        \filldraw[black] (C) circle (0.5pt) node[anchor=south west]{$p_3^*$};
        \filldraw[black] (D) circle (0.5pt) node[anchor=east]{$p_2^*$};
        \filldraw[black] (Q) circle (0.5pt); \filldraw[black] (-2.03,0.2) circle (0pt) node[anchor=east]{$p^*$};
        
    \end{tikzpicture}
    \caption{Left: The opposite ratio of a vertex $F_{ij}$ with respect to an edge $e$ is defined in terms of the isotropic angles between the plane $p$ (the span of $p_1\cap p_3$ and $p_2\cap p_4$) and the planes $p_1,p_2,p_3,p_4$ of the faces. Right: The opposite ratio of the quadrilateral $p_1^*p_2^*p_3^*p_4^*$ with respect to the side $p_1^*p_4^*$ is the ratio of the areas of the colored triangles $p^*p_1^*p_4^*$ and $p^*p_2^*p_3^*$. See Section~\ref{ssec-finite-statements}.}
    \label{fig:opposite-ratio}
\end{figure}

The following theorem characterizes 
all flexible $m\times n$ nets in $I^3$. See Figure~\ref{fig:flexible-class-i-def}.

\fixskip
\begin{theorem}  \label{th-flexible} 
    A dual-convex 
    $m\times n$ net with pairwise-non-parallel faces $p_{kl}$ 
    is flexible in $I^3$ if and only if at least one of the following conditions holds:  
    \begin{enumerate}
    \item[\textup{(i)}] 
    $n$ lines $p_{k,0}\cap p_{k+2,0},\ldots,p_{k,n-1}\cap p_{k+2,n-1}$ lie in one isotropic plane for each $0\le k\le m-3$ or $m$ lines $p_{0,l}\cap p_{0,l+2},\ldots,p_{m-1,l}\cap p_{m-1,l+2}$ lie in one isotropic plane for each $0\le l\le n-3$;
    \item[\textup{(ii)}] any two non-boundary vertices joined by an edge 
    have equal opposite ratios with respect to it.
\end{enumerate}
\end{theorem}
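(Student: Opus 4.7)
The plan is to reduce the characterization of flexibility to the classification of area-preserving continuous Combescure deformations of the metric dual net, obtained in the authors' prior work~\cite{pirahmad2024area}, and then to translate each of the two resulting classes back through the metric duality. This parallels the strategy used for infinitesimal flexibility in Section~\ref{ssec-infinitesimal-proofs}, but integrated in time.

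First I would establish the key correspondence: for a dual-convex $m\times n$ net $F_{ij}$ with metric dual $p^*_{kl}$, the nontrivial isotropic isometric deformations of $F_{ij}$ are in one-to-one correspondence (modulo trivial ones) with nontrivial continuous area-preserving Combescure transformations of the $(m-1)\times(n-1)$ net $p^*_{kl}$. The forward direction follows the scheme of Lemma~\ref{l-flexible-mixed}: the face condition forces each face of $F_{ij}(t)$ to be the image of the corresponding face of $F_{ij}$ under an isotropic congruence. Since isotropic congruences preserve top views, the top view of each face is rigid; hence for each $(k,l)$ the plane of $p_{kl}$ only changes by a translation depending on $t$, so the metric duals $p^*_{kl}(t)$ are parallel (Combescure transforms of $p^*_{kl}$) for every $t$. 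The vertex condition, namely the constancy of $\Omega(F_{ij}(t))$, translates by Definition~\ref{def-curvature} into preservation of the top-view area of the corresponding dual face $\overline{p^*_{i,j}(t)}\,\overline{p^*_{i-1,j}(t)}\,\overline{p^*_{i-1,j-1}(t)}\,\overline{p^*_{i,j-1}(t)}$. Conversely, given an area-preserving Combescure family $p^*_{kl}(t)$ with all faces convex (as guaranteed by the dual-convexity), one reconstructs $F_{ij}(t)$ by intersecting the three planes corresponding to three consecutive faces around each vertex, and the face condition holds because the top views of the faces are unchanged.

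Next I would invoke the main classification of~\cite{pirahmad2024area}: a planar or spatial net with convex faces admits a nontrivial continuous area-preserving Combescure deformation if and only if either (i$'$) it is composed of two interleaved cone-cylinder nets, or (ii$'$) it admits a Christoffel dual with equal areas of corresponding faces. It remains to translate each of these two conditions on $p^*_{kl}$ into conditions on $F_{ij}$. For (i$'$), in the dual picture a cone-cylinder decomposition means that for each second-family parameter line of $p^*_{kl}$ all its vertices lie on a cone with some apex (or a cylinder, i.e., apex at infinity). Polarizing with respect to the paraboloid $2z=x^2+y^2$ sends this apex to an isotropic plane containing all planes metric dual to the vertices, which are precisely the planes $p_{k,l}$ with the corresponding indices; the resulting incidence is exactly that the lines $p_{k,0}\cap p_{k+2,0},\dots,p_{k,n-1}\cap p_{k+2,n-1}$ (or the analogous lines with the roles of $k$ and $l$ swapped) lie in a single isotropic plane. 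This gives condition (i). For (ii$'$), one writes down the equal-area Christoffel dual condition on the quadrilateral $p_1^*p_2^*p_3^*p_4^*$ and observes, using the right-hand side of Figure~\ref{fig:opposite-ratio}, that equality of areas on both sides of the diagonal through $p^*$ is equivalent to the ratio of the triangle areas $[p^*p_1^*p_4^*]/[p^*p_2^*p_3^*]$ being the same at both endpoints of the corresponding edge. A direct computation relating isotropic dihedral angles between the planes $p_1,\dots,p_4$ and the plane $p$ to the areas of the associated dual triangles identifies this ratio with the opposite ratio of $F_{ij}$ with respect to the edge in question, giving condition (ii).

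The main obstacle I anticipate is the translation step for class (i): making precise the ``two interleaved cone-cylinder nets'' structure of the metric dual and tracking which parameter-line family is concerned, while correctly accounting for the two separate cases in condition~(i) of the theorem and for the possibility that the apex lies at infinity. I would address this by a careful case analysis indexed by parities and directions, reusing the geometric characterization of cone-nets from~\cite{kilian2023smooth} and the polarity formulas of Section~\ref{ssec:isotropic}. A secondary technical point is to ensure that during the deformation all intermediate nets remain dual-convex, so that the curvatures $\Omega(F_{ij}(t))$ are well-defined throughout; this follows by a continuity argument in the spirit of Lemma~\ref{l-affine}, possibly after shrinking the deformation parameter interval.
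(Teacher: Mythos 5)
Your overall route is the same as the paper's: reduce flexibility to deformability of the metric dual (this is Proposition~\ref{l-flexible-vs-deformable}, whose proof indeed runs through the rigidity of the planar top view and the observation that v-parallel deformations dualize to Combescure families), invoke the classification of deformable nets from~\cite{pirahmad2024area}, and translate back through the polarity. Your treatment of class~(ii) is essentially the paper's: the equal-area Christoffel-dual description is converted to equal opposite ratios of dual faces (Proposition~\ref{th-christoffel}), and the duality exchanges the opposite ratio of a face with respect to an edge with the opposite ratio of the dual vertex with respect to the dual edge.

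There is, however, a concrete error in your translation of class~(i). You assert that polarizing the apex of a cone ``with some apex (or a cylinder, i.e., apex at infinity)'' yields an \emph{isotropic} plane. That is false for a finite apex: the polarity with respect to $2z=x^2+y^2$ sends a finite point $(P^1,P^2,P^3)$ to the non-isotropic plane $z=P^1x+P^2y-P^3$; only an ideal point (a direction) has an isotropic polar plane. If the lines $p^*_{k,0}p^*_{k+2,0},\dots,p^*_{k,n-1}p^*_{k+2,n-1}$ were merely concurrent at a finite apex, their duals $p_{k,l}\cap p_{k+2,l}$ would lie in a common \emph{non-isotropic} plane, which is not condition~(i). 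So your argument as written would prove a different statement. The point you are missing is that for the family relevant to condition~(i) these lines are genuinely \emph{parallel}, never concurrent at a finite point: this is exactly what condition~(i$^*$) of Theorem~\ref{th-mxn} (affine symmetry of adjacent dual faces) gives via Proposition~\ref{th-old} ($CC'\parallel DD'$), chained along the row. The concurrent-or-parallel (``cone'') family of the interleaved cone-cylinder structure corresponds to a different geometric feature of $F_{ij}$ (the planarity of one family of parameter lines, as in Figure~\ref{fig:flexible-class-i-def}), not to condition~(i). To repair your argument you should work from the precise form (i$^*$)/(ii$^*$) of the classification rather than the descriptive ``interleaved cone-cylinder'' formulation, or else carefully identify which family of the cone-cylinder structure is the parallel one before dualizing. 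A secondary, smaller gap: in the converse direction of your first step, reconstructing $F_{ij}(t)$ by intersecting three face planes works at interior vertices (the four dual points are coplanar because they form a face of the dual net), but the boundary vertices are not determined this way; the paper instead lifts the fixed top view via Lemma~\ref{lem-col-L}.
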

\fixskip

\begin{figure}
    \centering
    \includegraphics[scale=0.07]{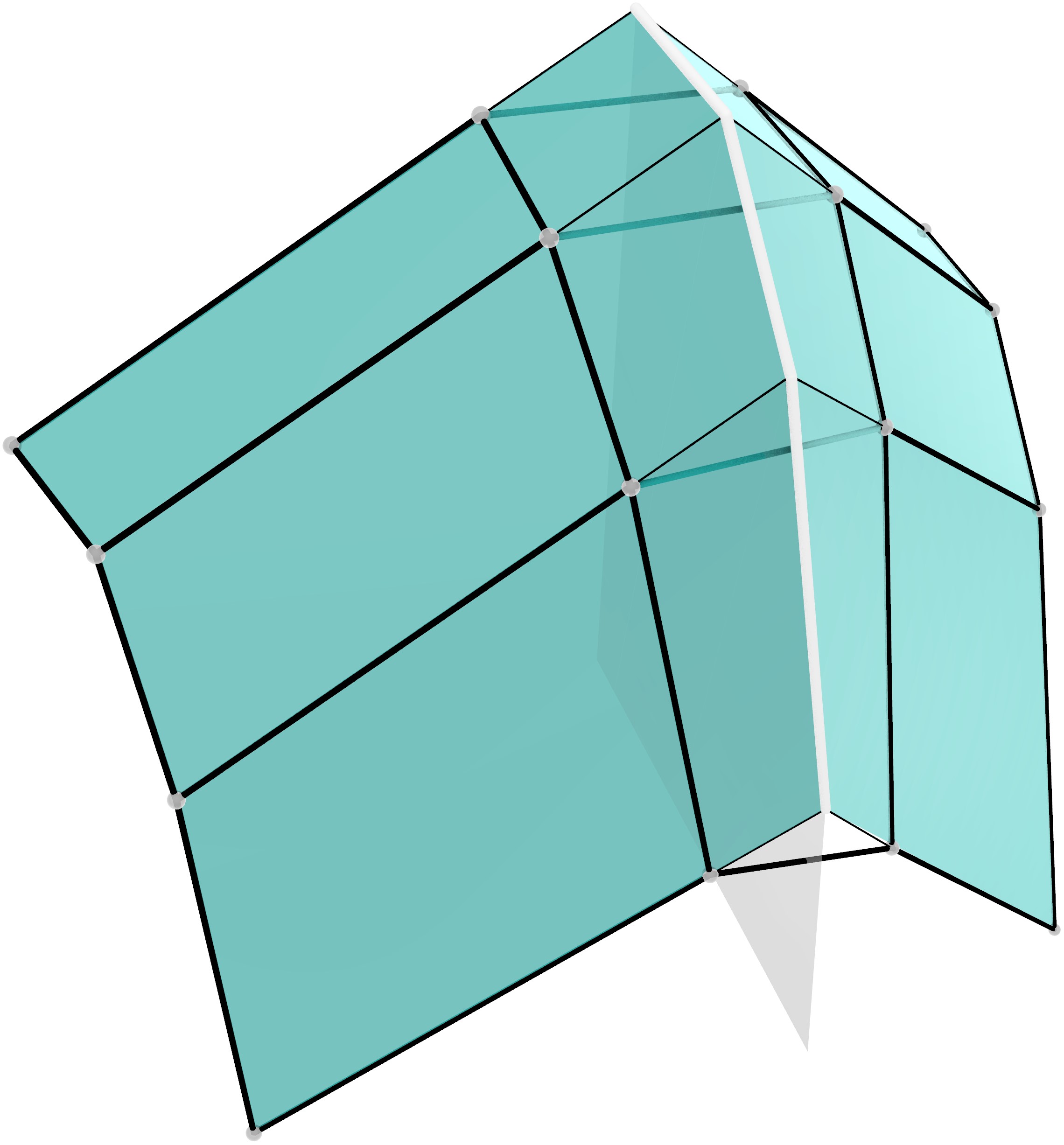}
    \caption{A flexible 
    net in $I^3$ of class~(i). 
    The intersections (white lines) of the planes of every other face lie in the same isotropic plane (gray). This condition implies that the extensions (thin lines) of every other edge on one family of parameter lines intersect or are parallel (thus the family of parameter lines is planar).
    Under some convexity assumptions, extending every other face until they meet, we construct a generalized T-net. The initial net is obtained from the resulting one by ``truncation of edges''.
    See Theorem~\ref{th-flexible}(i). }
    \label{fig:flexible-class-i-def}
\end{figure}

\begin{remark} Theorem~\ref{th-flexible} remains true if parallel faces are allowed in the dual-convex $m\times n$ net, but condition~(i) is replaced with the following one:
\begin{enumerate}
    \item[\textup{(i$'$)}] 
    for each $0\le k\le m-3$ the intersections $p_{k,0}\cap p_{k+2,0},\ldots,p_{k,n-1}\cap p_{k+2,n-1}$ are either all empty or all nonempty and lie in one isotropic plane or for each $0\le l\le n-3$ the intersections $p_{0,l}\cap p_{0,l+2},\ldots,p_{m-1,l}\cap p_{m-1,l+2}$ are either all empty or all nonempty and lie in one isotropic~plane.
\end{enumerate}
\end{remark}
\fixskip


Theorem~\ref{th-flexible} allows us to check if a given net is flexible. Now we 
parametrize flexible nets, allowing us to construct all flexible nets close enough to a certain standard one.

A \emph{wide L-shaped net of size $m\times n$} 
is an indexed collection of $3m+3n-3$ points $F_{ij}$, where the indices satisfy the inequalities $0\le i\le m\ge 2$, 
$0\le j\le n\ge 2$, 
$\min\{i,j\}\le 2$, such that $F_{ij},F_{i+1,j},F_{i,j+1},F_{i+1,j+1}$ are consecutive vertices of a convex quadrilateral whenever $i\in\{0,1\}$, $0\le j< n$ or $j\in\{0,1\}$, $0\le i< m$. See Figure~\ref{fig:wide-L-shaped-net}. 
\emph{Faces}, \emph{edges}, 
\emph{boundary vertices},  \emph{sub-nets}, and \emph{dual-convexity} are defined analogously to the ones of an $m\times n$ net. 
The wide L-shaped net 
with $F_{ij}=(i,j,i^2+j^2)$ 
is called \emph{standard}.
An $m\times n$ net \emph{extends} a wide L-shaped net of size $m\times n$, if the planes of faces $p_{ij}$ 
in the two nets coincide whenever $i\in\{0,1\}$, $0\le j< n$ or $j\in\{0,1\}$, $0\le i< m$.   

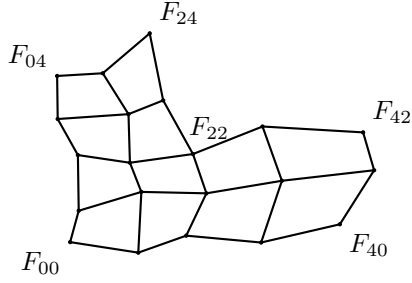
\begin{figure}
    \centering
    \begin{tikzpicture}[scale=0.2]

    \coordinate (f00) at (-6.514238984885258,-7.5200909923543255);
    \coordinate (f01) at (-5.950485913957726,-5.439573706790447);
    \coordinate (f02) at (-6.004176682617958,-1.7483333614345793);
    \coordinate (f03) at (-7.333023206945937,0.6409058439228951);
    \coordinate (f04) at (-7.370842141705287,3.4951126671782276);
    \coordinate (f10) at (-2.031059801798932,-8.218070984930659);
    \coordinate (f11) at (-1.8297194193244617,-4.1912633354519775);
    \coordinate (f12) at (-2.5813901805609314,-2.258395663701887);
    \coordinate (f13) at (-2.675349025715435,0.9630504558812242);
    \coordinate (f14) at (-4.366608238496497,3.6744342731970128);
    \coordinate (f20) at (1.1232728569599566,-7.0905648430768045);
    \coordinate (f21) at (2.4521193812879423,-4.285222180606488);
    \coordinate (f22) at (1.5662216984020454,-1.6812199006099324);
    \coordinate (f23) at (-0.3934913576774902,1.8757935230964415);
    \coordinate (f24) at (-1.2793890405629182,6.318704629688132);
    \coordinate (f30) at (6.049400881488481,-7.546936376684157);
    \coordinate (f31) at (7.418515482311733,-3.4530152663808447);
    \coordinate (f32) at (6.143359726642856,0.1442662338204962);
    \coordinate (f40) at (11.230560057151624,-6.338894081840734);
    \coordinate (f41) at (13.485572340859738,-2.7550352738045016);
    \coordinate (f42) at (12.760746963953556,-0.24499183896263965);

    \draw[black, thick] (f00) -- (f01) -- (f02) -- (f03) -- (f04) -- (f14) -- (f24) -- (f23) -- (f22) -- (f32) -- (f42) -- (f41) -- (f40) -- (f30) -- (f20) -- (f10) -- (f00);
    \draw[black, thick] (f01) -- (f11) -- (f21) -- (f31) -- (f41);
    \draw[black, thick] (f02) -- (f12) -- (f22);
    \draw[black, thick] (f03) -- (f13) -- (f23);
    \draw[black, thick] (f10) -- (f11) -- (f12) -- (f13) -- (f14);
    \draw[black, thick] (f20) -- (f21) -- (f22);
    \draw[black, thick] (f30) -- (f31) -- (f32);

    \draw (f04) node[anchor=south east] {${F_{04}}$};
    \draw (f00) node[anchor=north east] {${F_{00}}$};
    \draw (f42) node[anchor=south west] {${F_{42}}$};
    \draw (0.7,-1.3) node[anchor=south west] {${F_{22}}$};
    \draw (f40) node[anchor=north west] {$F_{40}$};
    \draw (f24) node[anchor=south west] {$F_{24}$};

    \draw [fill=black] (-6.004176682617958,-1.7483333614345793) circle (3pt);
    \draw [fill=black] (-7.333023206945937,0.6409058439228951) circle (3pt);
    \draw [fill=black] (-2.675349025715435,0.9630504558812242) circle (3pt);
    \draw [fill=black] (-2.5813901805609314,-2.258395663701887) circle (3pt);
    \draw [fill=black] (-1.8297194193244617,-4.1912633354519775) circle (3pt);
    \draw [fill=black] (-5.950485913957726,-5.439573706790447) circle (3pt);
    \draw [fill=black] (1.5662216984020454,-1.6812199006099324) circle (3pt);
    \draw [fill=black] (2.4521193812879423,-4.285222180606488) circle (3pt);
    \draw [fill=black] (-6.514238984885258,-7.5200909923543255) circle (3pt);
    \draw [fill=black] (-2.031059801798932,-8.218070984930659) circle (3pt);
    \draw [fill=black] (1.1232728569599566,-7.0905648430768045) circle (3pt);
    \draw [fill=black] (6.143359726642856,0.1442662338204962) circle (3pt);
    \draw [fill=black] (7.418515482311733,-3.4530152663808447) circle (3pt);
    \draw [fill=black] (6.049400881488481,-7.546936376684157) circle (3pt);
    \draw [fill=black] (-7.370842141705287,3.4951126671782276) circle (3pt);
    \draw [fill=black] (-4.366608238496497,3.6744342731970128) circle (3pt);
    \draw [fill=black] (-0.3934913576774902,1.8757935230964415) circle (3pt);
    \draw [fill=black] (-1.2793890405629182,6.318704629688132) circle (3pt);
    \draw [fill=black] (12.760746963953556,-0.24499183896263965) circle (3pt);
    \draw [fill=black] (13.485572340859738,-2.7550352738045016) circle (3pt);
    \draw [fill=black] (11.230560057151624,-6.338894081840734) circle (3pt);

\end{tikzpicture}
    \caption{A wide L-shaped net of size $4\times4$. See Corollary~\ref{cor-L}.} 
    \label{fig:wide-L-shaped-net}
\end{figure}


\fixskip
\begin{corollary}  \label{cor-L}
If a dual-convex wide L-shaped net of size $m\times n$ is sufficiently close to the standard one and satisfies one of the following conditions, 
then 
there exists a unique flexible dual-convex $m\times n$ net in $I^3$ that extends it: 
\begin{enumerate}
\item[\textup{(i${}''$)}] 
     the pair of lines $p_{k,0}\cap p_{k+2,0}$ and $p_{k,1}\cap p_{k+2,1}$ lies in one isotropic plane for each $0\le k\le m-3$ or the pair of lines $p_{0,l}\cap p_{0,l+2}$ and $p_{1,l}\cap p_{1,l+2}$ lies in one isotropic plane for each $0\le l\le n-3$.
\item[\textup{(ii)}] any two non-boundary vertices joined by an edge 
    have equal opposite ratios with respect to it.
\end{enumerate} 

\end{corollary}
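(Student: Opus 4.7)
The plan is to apply the implicit function theorem at the canonical paraboloid reference. I would parameterize dual-convex $m\times n$ extensions of a given wide L-shape by listing the new vertices $F_{ij}$ (with $i,j\ge 3$) in an order compatible with increasing $i+j$: each such $F_{ij}$ must lie in the plane of the face $p_{i-1,j-1}$, determined by the three previously constructed vertices $F_{i-1,j-1}, F_{i,j-1}, F_{i-1,j}$, and therefore contributes $2$ real parameters. The resulting manifold of extensions of a fixed L-shape has dimension $2(m-2)(n-2)$. The reference is the standard L-shape $F_{ij}^0=(i,j,i^2+j^2)$ together with its canonical paraboloid extension $F_{ij}=(i,j,i^2+j^2)$ for all $i,j$; a direct computation shows that this extension satisfies both (i) with $\pi_k=\{x=k+3/2\}$ and (ii) with all opposite ratios equal to $1$, and is hence flexible by Theorem~\ref{th-flexible}.

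Next I would identify the scalar equations cut out by the flexibility conditions. For (i${}''$) (first sub-condition by symmetry), the requirement $p_{k,l}\cap p_{k+2,l}\subset\pi_k$ for $0\le k\le m-3$ and $2\le l\le n-1$ yields $(m-2)(n-2)$ line-in-isotropic-plane conditions, each equivalent to $2$ scalar equations, for a total of $2(m-2)(n-2)$ equations, matching the dimension of the extension manifold. For (ii), the equal-opposite-ratio equations at the interior edges of the full net give a compatible count, with the equations supported entirely inside the L-shape being implied by the hypothesis that the L-shape itself satisfies (ii). The technical heart of the proof is then to verify that the Jacobian of the constraint map, with respect to the extension parameters, is non-degenerate at the paraboloid extension. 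My plan is to exhibit a block-triangular structure by ordering the new vertices and the constraints along anti-diagonals ($i+j=\text{const}$): for (i), perturbing $F_{ij}$ tilts the plane of the relevant face in a direction that moves its intersection with a reference plane transversally to $\pi_{i-2}$, with the transversality coefficient being non-zero at the paraboloid by an explicit slope computation; for (ii), the linearization of the opposite-ratio equation around the paraboloid value $1$ yields an analogous triangular system whose diagonal entries are computable and non-zero from the paraboloid slopes.

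Once the Jacobian is shown to be invertible, the implicit function theorem provides the existence and local uniqueness of the flexible extension for L-shapes sufficiently close to the standard and satisfying (i${}''$) or (ii), and dual-convexity is preserved under small perturbations as an open condition. The main obstacle is producing the correct block-triangular ordering and verifying the non-vanishing of the diagonal entries at the paraboloid -- this requires careful bookkeeping through the inductive structure but is plausible given the high symmetry of the paraboloid (invariance under isotropic rotations and under the swap of the two parameter directions). An alternative that sidesteps the global Jacobian computation is a direct inductive construction of the extension one new vertex at a time, in which each new vertex is uniquely determined by the appropriate local constraint at the current step; this approach moves the difficulty into a case-by-case verification of uniqueness at each inductive step but avoids setting up the full linearized system at once.
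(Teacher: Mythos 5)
Your route is genuinely different from the paper's. The paper does not set up any linearized system: it passes to the metric dual of the wide L-shaped net, observes that this is an $(m-1)\times(n-1)$ L-shaped net whose top view is close to an L-shaped square net, and then invokes the ready-made Corollary~\ref{cor-LL} from \cite{pirahmad2024area} (existence and uniqueness of a deformable extension of such an L-shaped net), together with Proposition~\ref{th-old} to translate condition (i$''$) into (i$^*$), Lemma~\ref{lem-col-L} to lift the top view back to space, Lemma~\ref{l-deformable-vs-top-view}, and Proposition~\ref{l-flexible-vs-deformable} to return to flexibility in $I^3$. All the analytic content is thus outsourced to the prior work; the proof in this paper is a five-line reduction.

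Measured against that, your proposal has a genuine gap: the step that carries all the difficulty --- invertibility of the Jacobian at the paraboloid extension, or equivalently the consistency and unique solvability of the local constraint system --- is announced as a plan but never executed, and it is exactly the statement that Corollary~\ref{cor-LL} encapsulates (its proof in \cite{pirahmad2024area} rests on the full classification Theorem~\ref{th-mxn}, not on a dimension count). The count itself is not enough: for condition (ii) the raw system has $(m-2)(n-1)+(m-1)(n-2)$ equations against $2(m-2)(n-2)$ unknowns, and the balance is restored only after one checks that exactly the $m+n-4$ equations supported on faces with an index in $\{0,1\}$ are implied by the hypothesis on the L-shape --- an accounting you assert rather than perform --- and even a matching count does not yield surjectivity of the linearization. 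Two further points are unaddressed. First, the implicit function theorem gives uniqueness only among extensions close to the paraboloid one, whereas the corollary asserts uniqueness among \emph{all} flexible dual-convex extensions. Second, by Theorem~\ref{th-flexible} an extension is flexible if it satisfies (i) \emph{or} (ii); since the standard net satisfies both, an L-shape satisfying (i$''$) could a priori admit one extension of class (i) and a different one of class (ii), and your argument, which fixes one class and solves its constraint system, does not exclude this. If you want a self-contained proof along your lines, the inductive one-vertex-at-a-time variant you mention at the end is the more promising branch, but you would still need to prove the closing condition (that the locally forced faces satisfy the constraints in both lattice directions simultaneously), which is the discrete integrability statement hidden in Corollary~\ref{cor-LL}.
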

\fixskip

As we shall see, the classification of flexible  $m\times n$ nets in $I^3$ reduces to the classification of deformable $(m-1)\times(n-1)$ nets, which was obtained in~\cite{pirahmad2024area} and recalled in Subsection~\ref{ssec-classification-deformable} below.

\fixskip
\begin{definition} \label{def-deformable-net} 
(See \cite[Definition~1]{pirahmad2024area}.)
An $m\times n$ net is \emph{deformable} if contained in a continuous family of pairwise non-congruent (in Euclidean or isotropic sense) parallel $m\times n$ nets 
with the same areas of corresponding faces. 
\end{definition}
\fixskip



We have the following analog of Proposition~\ref{prop-infinitesimal-characterization}. The reciprocal-parallel net is defined in Section~\ref{ssec-infinitesimal-statements} above. A \emph{dual-affine transformation} is a projective transformation of (the projectivization of) $I^3$ keeping the $z$-direction. The metric duality takes dual-affine transformations to affine ones and vice versa.

\fixskip
\begin{prop} \label{l-flexible-vs-deformable}
For a dual-convex $m\times n$ net, 
the following conditions are equivalent:
\begin{description}
    \item[\textup{\textbf{(I)}}] the $m\times n$ net is flexible in $I^3$;
    \item[\textup{\textbf{(D)}}] the metric dual $(m-1)\times (n-1)$ net is deformable;
    \item[\textup{\textbf{(R)}}] the $m\times n$ net has a reciprocal-parallel net with a deformable top view.  
\end{description}
\end{prop}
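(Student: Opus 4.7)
The plan is to adapt the argument of Proposition~\ref{prop-infinitesimal-characterization} from the infinitesimal to the finite setting. I would first establish the equivalence \textup{\textbf{(I)}}$\Leftrightarrow$\textup{\textbf{(D)}} by translating the face and vertex conditions directly through the metric duality, and then derive \textup{\textbf{(D)}}$\Leftrightarrow$\textup{\textbf{(R)}} from Lemma~\ref{l-deformable-reciprocal} and Corollary~\ref{p-reciprocal-parallel-top-view} together with properties of planar deformable nets from~\cite{pirahmad2024area}.

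For \textup{\textbf{(I)}}$\Leftrightarrow$\textup{\textbf{(D)}}, the first step is to normalize an isotropic isometric deformation $F_{ij}(t)$ so that the top view stays fixed: $\overline{F_{ij}(t)} = \overline{F_{ij}}$ for all $t$. Since an isotropic congruence projects to a Euclidean congruence in the top view, each top-view face $\overline{p_{kl}(t)}$ is Euclidean-congruent to $\overline{p_{kl}}$ as a labeled quadrilateral with prescribed diagonals. A planar Q-net whose faces are individually rigid in this sense is globally rigid by an elementary gluing argument along shared edges, so the top view of $F_{ij}(t)$ is obtained from $\overline{F_{ij}}$ by a single continuous family of planar Euclidean motions, which can be cancelled by composing $F_{ij}(t)$ with a continuous family of isotropic congruences $C_t$ with $C_0=\mathrm{id}$. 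After normalization, $F_{ij}(t)-F_{ij}=v_{ij}(t)\,e_3$; the face condition becomes the requirement that the four heights $v_{ij}(t)$ at the corners of face $(k,l)$ are the values of an affine function of the top-view coordinates, equivalently that the metric dual face is translated by some vector $c^*_{kl}(t)$. Consistency at shared edges then forces $\{p^*_{kl}(t)\}$ to be a Combescure family for $p^*_{kl}$, and by Definition~\ref{def-curvature} the vertex condition becomes exactly the equal-area condition on corresponding dual faces. This yields a bijection between non-trivial isotropic isometric deformations of $F_{ij}$ and non-trivial deformations of $p^*_{kl}$ in the sense of Definition~\ref{def-deformable-net}, proving \textup{\textbf{(I)}}$\Leftrightarrow$\textup{\textbf{(D)}}.

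For \textup{\textbf{(D)}}$\Leftrightarrow$\textup{\textbf{(R)}}, Lemma~\ref{l-deformable-reciprocal} shows that a reciprocal-parallel net $C_{kl}$ to $F_{ij}$ exists precisely when $p^*_{kl}$ admits a Christoffel dual, which is implied by deformability of $p^*_{kl}$. Corollary~\ref{p-reciprocal-parallel-top-view} identifies $\overline{C_{kl}}$ with the Christoffel dual of $\overline{p^*_{kl}}$ up to a rotation through $\pi/2$. The remaining task is to show that deformability of the 3D net $p^*_{kl}$ is equivalent to planar deformability of $\overline{C_{kl}}$. In one direction, projecting a 3D equal-area Combescure family of $p^*_{kl}$ to the top view and Christoffel-dualizing each member (using the height function of Lemma~\ref{l-height-function}) yields a planar deformation of $\overline{C_{kl}}$; the reverse direction proceeds analogously, with the lift from 2D to 3D again using height-function integration. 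The hard part will be precisely this transfer of deformability through Christoffel duality: the Combescure and equal-area conditions are linear and compatible with the duality, but the Christoffel dual is only determined up to scale, and its continuous dependence on the deformation parameter must be tracked carefully. I expect that the explicit classification of planar deformable nets from~\cite{pirahmad2024area} will provide the global control needed to complete this step.
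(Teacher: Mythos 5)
Your proposal is correct and follows essentially the same route as the paper: the normalization to a v-parallel (top-view-fixing) deformation is Lemma~\ref{l-isotropic-flexible}, the translation of the face and vertex conditions through metric duality gives \textbf{(I)}$\Leftrightarrow$\textbf{(D)} exactly as in the text, and the reciprocal-parallel equivalence is obtained from Lemma~\ref{l-deformable-reciprocal} and Corollary~\ref{p-reciprocal-parallel-top-view}. The step you flag as the ``hard part'' --- transferring deformability through the top-view projection and the Christoffel duality --- requires no new work: it is precisely Lemma~\ref{l-deformable-vs-top-view} combined with Corollary~\ref{cor-deformable-Christoffel-dual} (quoted from~\cite{pirahmad2024area}), which the paper invokes directly.
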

\fixskip

\begin{corollary} If a dual-convex $m\times n$ net is flexible in $I^3$, then all its Combescure and 
dual-affine transformations
are.
\end{corollary}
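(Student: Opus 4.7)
My plan is to deduce both claims from Proposition~\ref{l-flexible-vs-deformable}: the Combescure case from the equivalence \textbf{(I)}$\Leftrightarrow$\textbf{(R)}, and the dual-affine case from the equivalence \textbf{(I)}$\Leftrightarrow$\textbf{(D)}.

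For the Combescure case, suppose $F'_{ij}$ is a dual-convex Combescure transform of $F_{ij}$, so every edge of $F'$ is parallel to the corresponding edge of $F$. Let $C_{kl}$ be a reciprocal-parallel net of $F_{ij}$ with deformable top view, which exists by \textbf{(I)}$\Rightarrow$\textbf{(R)}. The defining relations $C_{i,j-1}C_{ij}\parallel F_{i+1,j}F_{ij}$ and $C_{i-1,j}C_{ij}\parallel F_{i,j+1}F_{ij}$ involve only the edge directions of $F$, which agree with those of $F'$; hence the same $C_{kl}$ is automatically reciprocal-parallel to $F'_{ij}$, with unchanged (thus still deformable) top view. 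By \textbf{(R)}$\Rightarrow$\textbf{(I)}, $F'_{ij}$ is flexible.

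For the dual-affine case, let $F'_{ij}=T(F_{ij})$ with $T$ dual-affine. By definition the metric dual of $T$ is an affine transformation $T^*$, and the metric dual of $F'_{ij}$ equals $p'^*_{kl}=T^*(p^*_{kl})$. Given a deformation family $(p^*_{kl}(t))$ of $p^*_{kl}$, the images $(T^*(p^*_{kl}(t)))$ are Combescure transforms of $p'^*_{kl}$ (affine maps preserve parallelism of edges), and the area of each face is multiplied by a $t$-independent factor depending only on $T^*$ and the plane direction of the face, so constancy of corresponding face areas in $t$ persists. Nontriviality is preserved as well: a trivial family among parallel nets consists of pure translates (any rigid motion preserving all edge directions must be a translation, provided the edge directions span enough of $\mathbb{R}^3$, which holds for dual-convex nets), and $T^*$ conjugates translations to translations, so a non-translate family maps to a non-translate, hence nontrivial, family. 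Thus $p'^*_{kl}$ is deformable, and \textbf{(D)}$\Rightarrow$\textbf{(I)} yields flexibility of $F'_{ij}$.

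The only step requiring real care is the nontriviality argument in the dual-affine case; the other ingredients are routine consequences of the affine-invariance of parallelism and the fact that the reciprocal-parallel characterization is formulated purely in terms of edge directions.
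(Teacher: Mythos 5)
The paper states this corollary without proof, as an immediate consequence of Proposition~\ref{l-flexible-vs-deformable}; the implicit argument is (a)~a Combescure transform has parallel corresponding face planes, so its metric dual is v-parallel to (hence has the same top view as) the original metric dual, and deformability depends only on the top view by Lemma~\ref{l-deformable-vs-top-view}; and (b)~a dual-affine map acts on the metric dual as an affine map, and deformability is affine-invariant because conditions~(i$^*$) and~(ii$^*$) of Theorem~\ref{th-mxn} (affine symmetry, ratios of areas of coplanar triangles) manifestly are. Your Combescure argument via \textbf{(R)} is a genuinely different but equally clean route: the reciprocal-parallel relations involve only edge directions, which a Combescure transform preserves, so the same net $C_{kl}$ with the same (deformable) top view works. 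That part is correct and complete.

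The dual-affine part is where there is a gap. Your parallelism and area-scaling arguments are fine, but the nontriviality step rests on the claim that ``any rigid motion preserving all edge directions must be a translation,'' which is false as stated: the point reflection $\mathbf{x}\mapsto -\mathbf{x}+\mathbf{c}$ preserves every edge direction and is not a translation (it happens to conjugate under $T^*$ to another point reflection, so it does no harm), and in degenerate configurations where all faces are parallelograms with edge directions confined to two families, a half-turn or a planar reflection also preserves all edge directions, yet its conjugate $(T^*)^{-1}gT^*$ is an affine involution that is in general \emph{not} a congruence. In that case congruence of $T^*(p^*(t_1))$ and $T^*(p^*(t_2))$ would not transport back to congruence of $p^*(t_1)$ and $p^*(t_2)$, and your contradiction evaporates. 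The clean repair is to bypass the transport of the deformation family altogether: deformability of the metric dual is equivalent to condition~(i$^*$) or~(ii$^*$) of Theorem~\ref{th-mxn}, and both conditions are preserved by any affine transformation, so $T^*(p^*_{kl})$ is deformable whenever $p^*_{kl}$ is. With that substitution your proof is complete.
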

\fixskip

The reader interested in the proofs of  Theorem~\ref{th-flexible}, Corollary~\ref{cor-L}, and Proposition~\ref{l-flexible-vs-deformable} can proceed to Section~\ref{ssec-classification-deformable}, 
and now we discuss geometric properties of the resulting classes~(i) and (ii) of flexible $m\times n$ nets in $I^3$.

\subsection{Geometric properties}
\label{ssec:Finite:Geometric properties}

\subsubsection{Class (i)}


Class (i) can be described in terms of dual-affine transformations. Two non-boundary vertices joined by an edge in a dual-convex $m\times n$ net are called \emph{dual-affine-symmetric with respect to the edge} if there is a dual-affine transformation taking the planes of the faces around the first vertex to the ones around the second vertex, and keeping two of those planes containing the edge fixed. By the metric duality and Proposition~\ref{th-old} below, 
condition~(i) in Theorem~\ref{th-flexible} is equivalent to the following one: \emph{on each parameter line of one family, any two adjacent non-boundary vertices are dual-affine-symmetric with respect to the edge between them}.

Any net of class~(i) has one family of planar parameter lines as explained in the caption of Figure~\ref{fig:flexible-class-i-def}.
The other family of parameter lines is not planar in general. However, if we intersect the planes of every other face, we get polylines in isotropic planes as shown in Figure~\ref{fig:flexible-class-i-def}. Note that the class of planar-quad nets whose parameter lines lie in planes is invariant under Combescure transformations and so is the property of parameter lines lying in isotropic planes.  

Within class~(i), there are special nets with particularly nice properties: \emph{the dual-convex $m\times n$ nets with planar parameter lines, where one family of parameter lines lie in isotropic planes}.
We call them \emph{generalized T-nets}. Any net of class~(i), subject to minor convexity assumptions, is obtained from some generalized T-net by ``truncation'' of edges (there are actually two such generalized T-nets). See Figure~\ref{fig:flexible-class-i-def}. 

Generalized T-nets can be viewed as the isotropic counterparts of the T-nets, a known class of flexible nets in Euclidean geometry \cite{sauer:1970}. Indeed, T-nets can be characterized by the property that the parameter lines are planar, and the planes of any two parameter lines from distinct families are orthogonal~\cite[Definition~2.1]{izmestiev2024isometric}. In isotropic geometry, the orthogonality of two families of planes means that one of the families consists of isotropic planes, and we come to the notion of generalized T-nets. For any T-net, the planes of one family of parameter lines are parallel~\cite[Lemma~2.2]{izmestiev2024isometric}. By a (Euclidean) rotation, one can make those planes parallel or orthogonal to the $z$-axis; in the latter case, the other family of planes is parallel to the $z$-axis. In either case, we get a generalized T-net, flexible in both Euclidean and isotropic geometries. But the class of generalized T-nets is much larger: in particular, their faces are not necessarily trapezoids. Class~(i) is even larger.




As we shall see, generalized T-nets are metric dual to \emph{cone-cylinder nets} given by the equation \cite[\S3.2.1]{pirahmad2024area}
\begin{equation}
    \label{eq-p-cone-cylinder-nets}
    P_{ij}=a_i + \sigma_i b_j,
    \qquad
    0 \leq i \leq m+1\text{ and } 0 \leq j \leq n+1,
\end{equation}
for some $a_0,\dots,a_{m+1},b_0,\dots,b_{n+1}\in\mathbb{R}^3$ and $\sigma_0,\dots,\sigma_{m+1}\in\mathbb{R}$ (up to interchanging the indices $i$ and~$j$). 
Cone-cylinder nets are a particular case of double cone-nets studied in \cite{kilian2023smooth}.
Cone-cylinder nets admit an explicit formula for the deformation.

\fixskip
\begin{prop}\textup{(See \cite[Proposition~12]{pirahmad2024area}.)}
\label{p-cone-cylinder-deformation}
    A cone-cylinder net~\eqref{eq-p-cone-cylinder-nets} with all $\sigma_i>0$ is embedded into a family 
\begin{equation} 
\label{eq:discrete-family}
    P_{ij}(t):=
    a_0+\sum_{k=1}^{i} \frac{\left(a_k-a_{k-1}\right)(\sigma_{k}+\sigma_{k-1})}
    {\sqrt{t+\sigma_{k}^2\vphantom{t+\sigma_{k-1}^2}}+\sqrt{t+\sigma_{k-1}^2}} 
    +
    \sqrt{t+\sigma_{i}^2}\,
    b_j,\ 
    0 \leq i \leq m+1\text,\, 0 \leq j \leq n+1,
\end{equation} 
    of cone-cylinder nets, which are 
    its area-preserving Combescure transformations, for some $\varepsilon>0$ and all $t\in [0,\varepsilon]$. 
\end{prop}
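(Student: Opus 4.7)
The plan is to verify directly from the explicit formula~\eqref{eq:discrete-family} that the family $P_{ij}(t)$ satisfies the three required properties: at every $t$ it is itself a cone-cylinder net, it is a Combescure transformation of $P_{ij}$, and the top-view areas of corresponding faces are preserved. First I would introduce the shorthand $\tilde\sigma_i(t) := \sqrt{t+\sigma_i^2}$ and $\tilde a_i(t) := a_0 + \sum_{k=1}^i \frac{(a_k-a_{k-1})(\sigma_k+\sigma_{k-1})}{\sqrt{t+\sigma_k^2}+\sqrt{t+\sigma_{k-1}^2}}$, so that $P_{ij}(t) = \tilde a_i(t) + \tilde\sigma_i(t)\,b_j$ is manifestly a cone-cylinder net in the sense of~\eqref{eq-p-cone-cylinder-nets}, with the same ``ruling directions'' $b_j$. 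At $t=0$ each summand collapses to $a_k-a_{k-1}$ (since $\sigma_i>0$), the sum telescopes to $a_i-a_0$, and one recovers $P_{ij}(0) = P_{ij}$.

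For the Combescure property, the key ingredient is the conjugate-multiplication identity
\[
\tilde\sigma_{i+1}(t) - \tilde\sigma_i(t) = \frac{\sigma_{i+1}^2 - \sigma_i^2}{\sqrt{t+\sigma_{i+1}^2}+\sqrt{t+\sigma_i^2}} = \lambda_i(t)\,(\sigma_{i+1} - \sigma_i),
\]
where $\lambda_i(t) := (\sigma_i+\sigma_{i+1})/(\sqrt{t+\sigma_i^2}+\sqrt{t+\sigma_{i+1}^2})$. Combined with $\tilde a_{i+1}(t) - \tilde a_i(t) = \lambda_i(t)(a_{i+1}-a_i)$ (which is exactly the defining summand), this yields $P_{i+1,j}(t) - P_{ij}(t) = \lambda_i(t)(P_{i+1,j}-P_{ij})$ with a single scalar $\lambda_i(t)$, so these edges remain parallel. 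The other edge family satisfies $P_{i,j+1}(t) - P_{ij}(t) = (\tilde\sigma_i(t)/\sigma_i)(P_{i,j+1}-P_{ij})$ directly.

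For area preservation I would compute the top-view area of the trapezoidal face $\overline{p_{ij}}$ using the diagonal formula $\tfrac12\det(\overline{P_{i+1,j+1}}-\overline{P_{ij}},\,\overline{P_{i,j+1}}-\overline{P_{i+1,j}})$; a short expansion (in which the cross terms involving $\det(\overline{a_{i+1}-a_i},\overline{b_j})$ cancel) gives
\[
\mathrm{Area}(\overline{p_{ij}}) = \tfrac12(\sigma_i+\sigma_{i+1})\bigl[\det(\overline{a_{i+1}-a_i},\,\overline{b_{j+1}-b_j}) + (\sigma_{i+1}-\sigma_i)\det(\overline{b_j},\,\overline{b_{j+1}-b_j})\bigr].
\]
Under the deformation, the prefactor becomes $\tilde\sigma_i(t)+\tilde\sigma_{i+1}(t) = (\sigma_i+\sigma_{i+1})/\lambda_i(t)$, while the bracket picks up an overall factor $\lambda_i(t)$ (because both $a_{i+1}-a_i$ and $\sigma_{i+1}-\sigma_i$ scale by $\lambda_i(t)$), giving exact cancellation. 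The existence of $\varepsilon>0$ for which $P_{ij}(t)$ remains a bona fide cone-cylinder net with convex faces then follows from continuity, since convexity of quadrilateral faces is an open condition that holds at $t=0$.

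The main ``obstacle'' is really just recognising that the single scalar $\lambda_i(t)$ must simultaneously govern both $\tilde a_{i+1}(t) - \tilde a_i(t)$ and $\tilde\sigma_{i+1}(t) - \tilde\sigma_i(t)$: this algebraic coincidence (essentially $A^2 - B^2 = (A-B)(A+B)$ applied to $A=\sqrt{t+\sigma_{i+1}^2}$, $B=\sqrt{t+\sigma_i^2}$) is exactly what makes the defining formula~\eqref{eq:discrete-family} respect both the Combescure constraint and the area constraint at the same time. Everything else is bookkeeping.
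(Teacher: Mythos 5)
Your verification is correct. Note that the paper itself contains no proof of Proposition~\ref{p-cone-cylinder-deformation}: it is imported verbatim as Proposition~12 of \cite{pirahmad2024area}, so there is no in-paper argument to compare against, and a direct check of the explicit formula~\eqref{eq:discrete-family}, as you carry out, is the natural route. All three of your steps go through: the telescoping at $t=0$ (using $\sigma_i>0$ so that $\sqrt{\sigma_i^2}=\sigma_i$); the identity $\tilde\sigma_{i+1}(t)-\tilde\sigma_i(t)=\lambda_i(t)\,(\sigma_{i+1}-\sigma_i)$ with the \emph{same} factor $\lambda_i(t)$ that rescales $\tilde a_{i+1}(t)-\tilde a_i(t)$, which is indeed the crux; and the factorization of the diagonal determinant as $(\sigma_i+\sigma_{i+1})\det(u,w)$ with $u=\overline{a_{i+1}-a_i}+(\sigma_{i+1}-\sigma_i)\overline{b_j}$ and $w=\overline{b_{j+1}-b_j}$, under which the deformation multiplies the first factor by $1/\lambda_i(t)$ and the second by $\lambda_i(t)$. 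One small point to tighten: you compute the \emph{top-view} area, whereas the quantity preserved by an area-preserving Combescure transformation in \cite{pirahmad2024area} (cf.\ Definition~\ref{def-deformable-net}) is the area of the planar face itself. This costs nothing: either note that corresponding faces of parallel nets lie in parallel planes, so their areas agree if and only if their top-view areas do (for non-isotropic face planes), or simply rerun your computation with the cross product $\tfrac12\bigl(P_{i+1,j+1}-P_{ij}\bigr)\times\bigl(P_{i,j+1}-P_{i+1,j}\bigr)$ in place of the $2\times2$ determinant; the identical cancellation then shows that the full vector area of each face is preserved. Your continuity argument producing a uniform $\varepsilon>0$ for which all (finitely many) faces remain convex is also fine.
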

\fixskip

Applying the metric duality, 
we get an explicit isotropic isometric deformation of a generalized T-net. Recall that $e_1=(1,0,0)^\mathrm{T}$, $e_2=(0,1,0)^\mathrm{T}$, $ e_3=(0,0,1)^\mathrm{T}$.

\fixskip
\begin{prop}\label{eq-generalized-T-net-flexion}
Any $m\times n$ generalized T-net 
has the form (up to interchanging the indices $i$ and $j$) 
$$\small F_{ij} = - \frac{1}{\det(e_3, b_{j+1}-b_j,\Delta_{ij})}
\begin{pmatrix}
    \det(e_1, b_{j+1}-b_j,\Delta_{ij}) \\
    \det(e_2, b_{j+1}-b_j,\Delta_{ij}) \\
    \det(a_i+\sigma_ib_j, b_{j+1}-b_j,\Delta_{ij})
\end{pmatrix},
\ \ 0\leq i\leq m, 0\leq j\leq n,
$$
for some $a_0,\dots,a_{m+1},b_0,\dots,b_{n+1}\in\mathbb{R}^3$ and $\sigma_0,\dots,\sigma_{m+1}>0$, where 
$$
\Delta_{ij}:=a_{i+1}-a_i+b_j(\sigma_{i+1}-\sigma_{i})
$$
and 
$\det(e_3, b_{j+1}-b_j,\Delta_{ij})\ne 0$ for all $0\leq i\leq m$, $0\leq j\leq n$. 

It has a nontrivial isotropic isometric deformation
\begin{equation}
\label{eq:is-is-def}
    \small F_{ij}(t) = - \frac{1}{\det(e_3, b_{j+1}-b_j,\Delta_{ij})}
    \begin{pmatrix}
        \det(e_1, b_{j+1}-b_j,\Delta_{ij}) \\
        \det(e_2, b_{j+1}-b_j,\Delta_{ij}) \\
        \det(P_{ij}(t), b_{j+1}-b_j,\Delta_{ij})
    \end{pmatrix},
    \quad 0\leq i\leq m, 0\leq j\leq n
\end{equation}
for some $\varepsilon>0$ and all $t\in[0,\varepsilon]$,
where $P_{ij}(t)$ is defined by~\eqref{eq:discrete-family}. 
\end{prop}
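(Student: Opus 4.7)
The plan is to deduce Proposition~\ref{eq-generalized-T-net-flexion} from the cone-cylinder case by applying metric duality to Proposition~\ref{p-cone-cylinder-deformation}. Three inputs are needed: (a) the characterization stated just above, that every generalized T-net is the metric dual of some cone-cylinder net $P_{ij}=a_i+\sigma_i b_j$ of the form~\eqref{eq-p-cone-cylinder-nets}; (b) Proposition~\ref{p-cone-cylinder-deformation}, which supplies the explicit area-preserving Combescure deformation $P_{ij}(t)$; and (c) the equivalence \textbf{(D)}$\Rightarrow$\textbf{(I)} of Proposition~\ref{l-flexible-vs-deformable}, which transports area-preserving Combescure deformations across the metric duality into isotropic isometric deformations.

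First, I compute the metric dual of a cone-cylinder net explicitly. The face $P_{ij}P_{i+1,j}P_{i+1,j+1}P_{i,j+1}$ is planar, with direction vectors emanating from $P_{ij}$ equal to $\Delta_{ij}=a_{i+1}-a_i+(\sigma_{i+1}-\sigma_i)b_j$ and $\sigma_i(b_{j+1}-b_j)$, so its supporting plane has equation $\det(X-P_{ij},b_{j+1}-b_j,\Delta_{ij})=0$. Expanding along the first column gives a linear equation in $X=(x,y,z)$; comparing with the canonical form $z=F^1x+F^2y-F^3$ of the plane dual to the point $F=(F^1,F^2,F^3)$ and dividing by $-\det(e_3,b_{j+1}-b_j,\Delta_{ij})$ yields exactly the stated formula for $F_{ij}$. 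The division is legitimate precisely when the plane is non-isotropic, which is the dual-convexity hypothesis; combined with input~(a), this proves the first assertion (the scalar $\sigma_i$ scales numerator and denominator equally and cancels).

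Second, for the flex, I apply Proposition~\ref{p-cone-cylinder-deformation} to obtain the family $P_{ij}(t)$ from~\eqref{eq:discrete-family}, which by construction is an area-preserving Combescure transformation of $P_{ij}$; by Proposition~\ref{l-flexible-vs-deformable}, its metric dual is an isotropic isometric deformation of $F_{ij}$. The explicit expression~\eqref{eq:is-is-def} then follows from a clean cancellation. Since $P_{ij}(t)$ is Combescure with $P_{ij}$, the edge vectors satisfy $P_{i+1,j}(t)-P_{ij}(t)=\mu_{ij}(t)\,\Delta_{ij}$ and $P_{i,j+1}(t)-P_{ij}(t)=\nu_{ij}(t)(b_{j+1}-b_j)$ for some scalars $\mu_{ij}(t),\nu_{ij}(t)$, so the direction vectors spanning the face of $P_{ij}(t)$ are parallel to the original $\Delta_{ij}$ and $b_{j+1}-b_j$. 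Repeating the dualisation computation of Step~1 for $P_{ij}(t)$ and factoring $\mu_{ij}(t)\nu_{ij}(t)$ out of both numerator and denominator in each component, one sees that the first two components of $F_{ij}(t)$ coincide with those of $F_{ij}$ (in agreement with v-parallelism), while only the third component is altered through its dependence on $P_{ij}(t)$. This is precisely~\eqref{eq:is-is-def}. Nontriviality of $F_{ij}(t)$ is inherited from the nontriviality of $P_{ij}(t)$ via the same equivalence in Proposition~\ref{l-flexible-vs-deformable}.

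The main obstacle is not computational but bookkeeping: one must verify that the non-degeneracy condition $\det(e_3,b_{j+1}-b_j,\Delta_{ij})\neq 0$ persists for all small~$t$ and that the deformed nets remain dual-convex so that the statement of Proposition~\ref{l-flexible-vs-deformable} applies. Both persist by continuity from $t=0$, with the existence interval $[0,\varepsilon]$ inherited from Proposition~\ref{p-cone-cylinder-deformation}. A minor subtlety is the clause ``up to interchanging the indices $i$ and $j$'' in the statement: it reflects the two-sided nature of generalized T-nets (either family of parameter lines may be the one lying in isotropic planes), and is settled by swapping the roles of $i$ and $j$ in input~(a).
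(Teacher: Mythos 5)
Your overall strategy coincides with the paper's: dualize a cone-cylinder net face by face to get the explicit formula, then dualize the area-preserving family $P_{ij}(t)$ of Proposition~\ref{p-cone-cylinder-deformation} to get the flex. Your computation of the dual point of the plane $\det(X-P_{ij},\,b_{j+1}-b_j,\,\Delta_{ij})=0$ and the cancellation of the Combescure factors $\mu_{ij}(t),\nu_{ij}(t)$ are both correct and do reproduce~\eqref{eq:is-is-def}.

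The gap is in your input (a). The sentence ``generalized T-nets are metric dual to cone-cylinder nets,'' which you cite as ``the characterization stated just above,'' is prefaced in the paper by ``as we shall see'': it is a forward reference to the proof of this very proposition, not an available result, so taking it as an input is circular at exactly the nontrivial point of the first assertion (``any generalized T-net \emph{has the form}\dots''). To close it you must argue, as the paper does via Proposition~\ref{p-cone-cylinder-nets}, that the metric dual of a generalized T-net is a cone-cylinder net: a parameter line lying in an isotropic plane dualizes to a family of parallel edges, and a planar parameter line dualizes to edges lying on concurrent or parallel lines, which is precisely the characterization of $P_{ij}=a_i+\sigma_i b_j$. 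You must also supply the boundary faces of the $(m+1)\times(n+1)$ net $P_{ij}$ (only its non-boundary faces are determined by the T-net; the paper chooses $\sigma_0,\sigma_{m+1}$ close to $\sigma_1,\sigma_m$ to keep them convex) and justify that all $\sigma_i$ may be taken positive (convexity forces a common sign, and one may flip $\sigma$ and $b$ simultaneously) --- positivity is required before Proposition~\ref{p-cone-cylinder-deformation} can be invoked at all. A lighter point: Proposition~\ref{l-flexible-vs-deformable} as stated is an equivalence of flexibility and deformability of \emph{nets}, not a statement that the dual of a given area-preserving family is an isotropic isometric deformation; the paper instead checks the face condition (equal top views) and the vertex condition ($\Omega(F_{ij}(t))$ equals the area of the dual face, preserved because $P_{ij}(t)$ is area-preserving) directly, which is what you should do, or else appeal to the correspondence established inside the proof of Proposition~\ref{l-flexible-vs-deformable} rather than to its statement.
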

\fixskip

We prove this proposition in Section~\ref{ssec-proof-flexible}.


 Another interesting class of flexible nets in $I^3$ is 
 dual-convex $m\times n$ nets with both families of parameter lines contained in isotropic planes. They are metric dual to translational nets and are isotropic analogs of Voss nets; see \cite{isometric-isotropic}. Indeed, Voss nets are characterized by the condition that the opposite face angles around each vertex are equal. In isotropic geometry, this means that the edges around an ``admissible'' vertex lie in two isotropic planes because the isotropic angles are measured in the top view. Thus, the isotropic analogs of Voss nets are particular cases of generalized T-nets, in contrast to Euclidean Voss nets.

\subsubsection{Class (ii)}

%

For nets of class~(ii), we have the following analog of Proposition~\ref{prop-infinitesimal-characterization}\textbf{(V)}.

\fixskip
\begin{prop}\label{p-class-ii-v-parallel} A dual-convex 
$m\times n$ net satisfies condition~(ii) of Theorem~\ref{th-flexible} if and only if it has a v-parallel one with opposite curvatures and vanishing mixed curvature of the corresponding vertices.
\end{prop}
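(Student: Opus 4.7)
My plan is to reduce the equivalence to a property of the metric dual $(m-1)\times(n-1)$ net $p^*_{kl}$ of $F_{ij}$, following the same dictionary as in Lemmas~\ref{l-flexible-mixed} and~\ref{l-mixed-deformable}. In that dictionary, an $m\times n$ net v-parallel to $F_{ij}$ corresponds to a Combescure transform $q^*_{kl}$ of $p^*_{kl}$; the curvature $\Omega(F_{ij})$ equals the oriented area of the face of $p^*_{kl}$ corresponding to the non-boundary vertex $F_{ij}$; and the mixed curvature at corresponding non-boundary vertices of two v-parallel nets equals the mixed area of the corresponding faces of the two dual nets. Hence the right-hand side of the proposition translates to: $p^*_{kl}$ admits a Combescure transform $q^*_{kl}$ with opposite corresponding face areas and vanishing corresponding mixed areas.

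Next, by the computation used in the proof of Lemma~\ref{l-infinitesimal-deformable} (i.e., \cite[Theorem~13 and Eq.~(3)]{bobenko-2010-curv}), two parallel convex quadrilaterals have vanishing mixed area if and only if they are dual in the sense of Subsection~\ref{ssec-infinitesimal-statements}. Combined with the observation that negating a net preserves Christoffel duality while flipping signed face areas, the condition above becomes equivalent to: $p^*_{kl}$ admits a Christoffel dual with equal corresponding face areas.

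The remaining step is to identify this last condition with condition~(ii) of Theorem~\ref{th-flexible}. I would invoke the classification in~\cite{pirahmad2024area}: the class there characterized by admitting a Christoffel dual with equal corresponding face areas coincides with the class of nets whose two faces sharing an edge have the same opposite ratio with respect to that edge. Translating under the metric-dual dictionary (faces of $p^*_{kl}$ correspond to non-boundary vertices of $F_{ij}$; edges of $p^*_{kl}$ shared by two faces correspond to edges of $F_{ij}$ between two non-boundary vertices; and the ratio of triangle areas cut by a diagonal of a face of $p^*_{kl}$ equals the opposite ratio of the corresponding vertex of $F_{ij}$ with respect to the corresponding edge, precisely as depicted in Figure~\ref{fig:opposite-ratio}), the two formulations of condition~(ii) agree, yielding the proposition.

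The main obstacle will be the last step: verifying carefully that the ratio-of-triangle-areas invariant of a face of $p^*_{kl}$ is literally equal to the opposite ratio of the corresponding non-boundary vertex of $F_{ij}$ (rather than merely a multiple), and that the signed-versus-absolute subtleties in ``equal areas'' survive the dual-convexity assumption. Both of these are essentially already encoded in the right half of Figure~\ref{fig:opposite-ratio} and the classification results of~\cite{pirahmad2024area}, so the proof should amount to applying them in the correct direction.
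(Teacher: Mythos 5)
Your proposal follows essentially the same route as the paper: translate the v-parallel net with vanishing mixed curvatures into a Christoffel dual of the metric dual net (Lemmas~\ref{l-mixed-deformable} and~\ref{l-infinitesimal-deformable}), convert the curvature condition into a condition on face areas, apply Proposition~\ref{th-christoffel} to get condition~(ii$^*$) of Theorem~\ref{th-mxn}, and carry the opposite-ratio condition across the metric duality exactly as in Case~(ii$^*$) of the proof of Theorem~\ref{th-flexible}. The last step you worry about is indeed handled in the paper by the observation that the metric duality takes isotropic distances to isotropic angles and reverses inclusions, so the triangle-area ratio of a dual face literally equals the opposite ratio of the corresponding vertex.

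One step of your argument, however, is wrong as stated: ``negating a net preserves Christoffel duality while flipping signed face areas.'' The areas in question are oriented areas of \emph{top views}, i.e., planar signed areas, and the point reflection $X\mapsto -X$ of the plane is a rotation through $\pi$, which preserves orientation; by the shoelace formula $\tfrac12\sum\det(-X_i,-X_{i+1})=\tfrac12\sum\det(X_i,X_{i+1})$, negation does not change signed areas. So this device cannot convert ``opposite signed areas'' into ``equal signed areas.'' The correct resolution, which is the one the paper uses, is that for a pair of Christoffel dual quadrilaterals the orientations are automatically opposite (check, e.g., the unit square: its dual is a square traversed clockwise), so for a Christoffel dual pair the condition ``opposite oriented areas of corresponding faces'' is equivalent to ``equal \emph{unoriented} areas,'' which is precisely the hypothesis of Proposition~\ref{th-christoffel}. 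With that replacement your proof closes; without it, the sign bookkeeping in your second step does not go through.
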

\fixskip

We prove Proposition~\ref{p-class-ii-v-parallel} in  Section~\ref{ssec-proof-flexible}.

Here the conditions on the curvatures are symmetric, hence the v-parallel $m\times n$ net also belongs to class (ii) and is flexible in $I^3$. Such pairs of nets can be viewed as discrete relative minimal surfaces with relative Gaussian curvature $-1$ in the sense of \cite[Eq.~(22)]{pottmann2007discrete} (where we remove the assumption that one of the nets approximates the isotropic unit sphere, meaning passage to the \emph{relative} curvature). 

Dual-convex $m\times n$ nets with both families of parameter lines contained in isotropic planes 
belong to both classes~(i) and~(ii). For the other nets in class~(ii), we observe the same \emph{zig-zag phenomenon} as for deformable nets \cite[\S3.2.2]{pirahmad2024area}: the top views of the parameter lines have a ``zig-zag'' shape. A similar phenomenon is known in Euclidean geometry, e.g. for Miura-ori \cite[Figure~12]{izmestiev2024isometric}.


\subsection{An auxiliary classification}
\label{ssec-classification-deformable}

Theorem~\ref{th-flexible} will be obtained by the metric duality from the known classification of deformable $m\times n$ nets from \cite{pirahmad2024area}, which we recall now. 

We need the following notions. A pair of quadrilaterals in $I^3$ with a common side is called \emph{affine symmetric with respect to the common side} if there is an affine map taking the first quadrilateral to the second one and keeping the points of the common side fixed. 

The ratio of the areas of triangles $AQB$ and $CQD$ in a quadrilateral $ABCD$ with the diagonals meeting at $Q$ is called the \emph{opposite ratio of 
$ABCD$ with respect to the side} $AB$. It equals 
$AQ\cdot BQ/(CQ\cdot DQ)$ and also $\|AQ\|_i\cdot \|BQ\|_i/(\|CQ\|_i\cdot \|DQ\|_i)$ if the plane of the quadrilateral is non-isotropic. Notice that if two quadrilaterals are affine symmetric with respect to the common side, then their opposite ratios with respect to all corresponding sides are equal. 

\fixskip
\begin{theorem} \textup{(See \cite[Theorem~9]{pirahmad2024area}.)}
\label{th-mxn}
An $m\times n$ net is deformable if and only if one of the following conditions~holds:
\begin{itemize}
\item[\textup{(i$^*$)}] in each $1\times 2$ sub-net or in each $2\times 1$ sub-net, the two faces are affine symmetric with respect to their common edge;
\item[\textup{(ii$^*$)}] each pair of faces with a common edge has equal opposite ratios with respect to that edge.
\end{itemize}
\end{theorem}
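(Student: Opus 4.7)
The plan is to recognize deformability as the positive-dimensionality of a specific algebraic subvariety of the affine space of parallel (Combescure) transformations, reduce the classification to local analysis of small sub-nets, and propagate to a global dichotomy.

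First, I would parametrize the Combescure transformations of the given net. Fixing the direction of each edge vector, a parallel net is specified by scaling each edge subject to closure around every face; this yields a finite-dimensional affine parameter space. Inside it, the area-preserving set is the zero locus of $mn$ quadratic polynomial equations, one per face. Deformability thus amounts to this algebraic variety having positive dimension at the given net, modulo the trivial one-dimensional direction of global scaling.

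Next, I would analyze $1\times 2$ and $2\times 1$ sub-nets locally. For two faces sharing an edge, direct computation of the two quadratic area equations shows that nontrivial one-parameter area-preserving families exist in exactly two geometric situations: either the two faces are affine-symmetric with respect to their common edge (so a symmetric deformation preserves both areas simultaneously), or their opposite ratios with respect to that edge coincide (making the two quadratic equations algebraically compatible along a common curve of solutions). This identifies the two local archetypes corresponding to (i$^*$) and (ii$^*$).

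I would then establish the global dichotomy by propagating these local conditions. Analyzing a $2\times 2$ sub-net, the four area-preservation equations combine consistently only in restricted ways: if condition (ii$^*$) fails at some edge, then continuity of the deformation forces every $1\times 2$ sub-net along one parameter family to be of affine-symmetric type, yielding class (i$^*$) globally. The converse (sufficiency) would be handled by explicit construction: for (ii$^*$), the equal opposite ratios let one vary scaling factors along one parameter family while compensating along the other, preserving each face area; for (i$^*$), the affine symmetry along one direction produces a one-parameter family that extends across the whole net via shared edges.

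The main obstacle I expect is the global propagation step: one must rule out ``mixed'' deformable nets in which part of the local structure is of type (i$^*$) and part of type (ii$^*$). This requires pinning down how the four local quadratic equations on a $2\times 2$ sub-net interact algebraically, and showing that genuine one-parameter families survive only in the two listed cases. The other steps, while computationally nontrivial, become essentially direct once this local-to-global classification is in place.
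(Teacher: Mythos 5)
First, a point of context: this paper does not prove Theorem~\ref{th-mxn} at all --- it is imported verbatim from \cite[Theorem~9]{pirahmad2024area} and used as a black box, so there is no internal proof to compare against; your proposal has to be judged on its own merits as a proof of the cited result.

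On those merits, there is a genuine gap, and it sits exactly where you locate the ``two local archetypes.'' You claim that a direct computation on a $1\times 2$ sub-net shows nontrivial area-preserving families exist \emph{only} in the affine-symmetric or equal-opposite-ratio cases. This is false: for two faces sharing an edge, the space of Combescure transforms modulo translation is $3$-dimensional (seven edge scalings minus two planar closure conditions per face), and the area-preserving locus is cut out by only two quadrics, each passing through the base point. Generically this intersection is a curve through the original net, so \emph{every} $1\times 2$ sub-net is deformable and no condition on the pair of faces can be extracted at that level. The dichotomy (i$^*$)/(ii$^*$) only emerges from the $2\times 2$ configuration, where the parameter count ($12$ edges minus $8$ closure conditions) exactly matches the four quadratic area equations, so that positive-dimensionality of the solution variety is a genuine algebraic degeneracy. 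Your plan relegates precisely this computation to the vague sentence that ``the four area-preservation equations combine consistently only in restricted ways,'' which is the entire mathematical content of the theorem; deriving the two geometric conditions, and excluding mixed configurations, from that system of four quadrics is the whole work, and it is not done. A smaller but real error: the ``trivial one-dimensional direction of global scaling'' is not a direction inside the area-preserving variety, since scaling by $\lambda$ multiplies face areas by $\lambda^2$; the trivial deformations one must quotient by are translations (and the isolated central reflection), not scalings. The overall architecture --- parametrize Combescure transforms, impose area equations, analyze local sub-nets, propagate --- is the right one and matches the strategy of \cite{pirahmad2024area}, but as written the proposal proves neither the necessity nor the sufficiency of (i$^*$)/(ii$^*$).
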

\fixskip

We are going to use the following equivalent forms of conditions~(i$^*$) and~(ii$^*$). 

\fixskip
\begin{prop}\label{th-old} \textup{(See \cite[Proposition~2]{pirahmad2024area} and Figure~\ref{figure:properties}.)}
Two non-coplanar convex quadrilaterals $ABCD$ and $ABC'D'$ in $\mathbb{R}^d$ 
are affine symmetric with respect to their common side $AB$, if and only if $CC'\parallel DD'$.
%
%
%
%
\end{prop}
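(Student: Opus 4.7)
The plan is to reduce both directions to a single linear-algebra setup and then exploit the non-coplanarity at exactly one place. Let $\pi_1$ and $\pi_2$ denote the planes of the quadrilaterals $ABCD$ and $ABC'D'$; non-coplanarity together with the shared side forces $\pi_1 \cap \pi_2$ to be exactly the line through $A$ and $B$. Any affine map $T \colon \mathbb{R}^d \to \mathbb{R}^d$ that fixes $A$ and $B$ automatically fixes the segment $AB$ pointwise and has the form $T(x) = A + L(x - A)$ for some linear $L$ with $L(\overrightarrow{AB}) = \overrightarrow{AB}$. The symmetry condition is thus equivalent to the existence of such an $L$ with $L(\overrightarrow{AC}) = \overrightarrow{AC'}$ and $L(\overrightarrow{AD}) = \overrightarrow{AD'}$.

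For $(\Rightarrow)$, I would expand $\overrightarrow{AD} = \alpha\, \overrightarrow{AB} + \beta\, \overrightarrow{AC}$ using the planarity of $ABCD$, apply $L$ to obtain $\overrightarrow{AD'} = \alpha\, \overrightarrow{AB} + \beta\, \overrightarrow{AC'}$, and subtract to get $\overrightarrow{DD'} = \beta\, \overrightarrow{CC'}$, i.e.\ $CC' \parallel DD'$. This direction does not use non-coplanarity.

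For $(\Leftarrow)$, I would expand $\overrightarrow{AD} = \alpha\, \overrightarrow{AB} + \beta\, \overrightarrow{AC}$ in $\pi_1$ and $\overrightarrow{AD'} = \alpha'\, \overrightarrow{AB} + \beta'\, \overrightarrow{AC'}$ in $\pi_2$. Non-coplanarity rules out $C = C'$ (otherwise $A, B, C$ would lie in both planes and force $\pi_1 = \pi_2$), so the assumed parallelism takes the form $\overrightarrow{DD'} = \lambda\, \overrightarrow{CC'}$ for a unique scalar $\lambda$. Substituting both expansions into $\overrightarrow{AD'} - \overrightarrow{AD} = \overrightarrow{DD'}$ produces
\[
(\alpha' - \alpha)\, \overrightarrow{AB} + (\beta' - \lambda)\, \overrightarrow{AC'} + (\lambda - \beta)\, \overrightarrow{AC} = 0.
\]
The crux of the argument is the observation that $\overrightarrow{AC} \notin \pi_2 = \mathrm{span}(\overrightarrow{AB}, \overrightarrow{AC'})$: otherwise both planes would contain the three non-collinear points $A, B, C$ and would coincide. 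Hence $\overrightarrow{AB}, \overrightarrow{AC'}, \overrightarrow{AC}$ are linearly independent, forcing $\alpha = \alpha'$ and $\beta = \beta' = \lambda$. I would then define $L$ on $\mathrm{span}(\overrightarrow{AB}, \overrightarrow{AC})$ by the prescribed images and extend it arbitrarily to $\mathbb{R}^d$ (for instance, as the identity on a chosen complement); the matched coefficients ensure $L(\overrightarrow{AD}) = \overrightarrow{AD'}$ automatically, and the resulting $T$ is the desired symmetry.

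The main obstacle I expect is recognizing precisely where the non-coplanarity hypothesis enters: it is used exactly once, to produce the third linearly independent vector $\overrightarrow{AC}$ in the span argument above. Once this role is identified, the remaining steps are routine affine algebra.
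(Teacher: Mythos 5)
Your argument is correct. Note that the paper itself does not prove Proposition~\ref{th-old}; it imports it from \cite[Proposition~2]{pirahmad2024area}, so there is no in-text proof to compare against. Your reduction to the linear map $L$ fixing $\overrightarrow{AB}$, the expansion $\overrightarrow{AD}=\alpha\,\overrightarrow{AB}+\beta\,\overrightarrow{AC}$, and the linear-independence argument for $\overrightarrow{AB},\overrightarrow{AC'},\overrightarrow{AC}$ (which is indeed the one place where non-coplanarity is essential, and which also guarantees $C\neq C'$ and $D\neq D'$ so that the parallelism statement is non-degenerate) together give a complete and self-contained elementary proof of both directions. The only point worth tightening is the extension step in the converse: extending $L$ ``as the identity on a chosen complement'' of $\mathrm{span}(\overrightarrow{AB},\overrightarrow{AC})$ need not produce an invertible map, since the image plane $\mathrm{span}(\overrightarrow{AB},\overrightarrow{AC'})$ may meet that complement. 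If the affine symmetry is required to be an affine \emph{transformation} of $\mathbb{R}^d$, you should instead invoke the standard fact that a linear isomorphism between two subspaces of equal dimension extends to a linear automorphism of the ambient space (e.g.\ send $\overrightarrow{AC'}\mapsto\overrightarrow{AC}$ on the three-dimensional span and act as the identity on a complement of that span); this is a one-line repair and does not affect the substance of the argument.
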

\fixskip

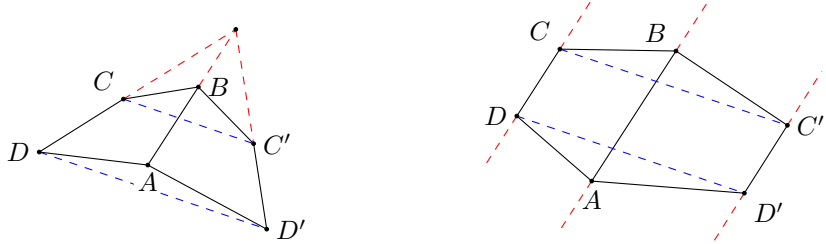
\begin{figure}[htbp]
    \centering
    \begin{tikzpicture}[scale=0.17]
        \coordinate (D) at (-2.50143, 1.26455);
        \coordinate (C) at (4.02081, 5.3953);
        \coordinate (B) at (9.81836, 6.3374);
        \coordinate (C') at (14.09405, 1.91677);
        \coordinate (D') at (15.10862, -4.75041);
        \coordinate (A) at (3.29612, -3.80831);
        \coordinate (P) at (12.71713, 10.8305);
        \coordinate (A') at ($(A)!0.4!(B)$);
    
        \draw[black, thin] (D) -- (C) -- (B) -- (C') -- (D') -- (A') -- cycle;
        \draw[black, thin] (A') -- (B);
        
        \draw[blue, dashed] (D) -- (D');
        \draw[blue, dashed] (C) -- (C');
        \draw[red, dashed] (C) -- (P) -- (B);
        \draw[red, dashed] (P) -- (C');
        
        \filldraw[black] (D) circle (4pt) node[anchor=east]{$D$};
        \filldraw[black] (C) circle (4pt) node[anchor=south east]{$C$};
        \filldraw[black] (B) circle (4pt) node[anchor=west]{$B$};
        \filldraw[black] (C') circle (4pt) node[anchor=west]{$C'$};
        \filldraw[black] (D') circle (4pt) node[anchor=west]{$D'$};
        \node[draw=white, fill=white, circle, inner sep=0.005pt, yshift=-6.5pt] at (A') {$A$};
        \filldraw[black] (A') circle (4pt);
        \filldraw[black] (P) circle (4pt) node[anchor=west]{};
    
    \end{tikzpicture}\qquad\qquad\qquad 
    \begin{tikzpicture}[scale=0.17]
        \coordinate (D) at (-2.50143, 1.26455);
        \coordinate (C) at (0.83216, 6.48234);
        \coordinate (B) at (9.81836, 6.3374);
        \coordinate (C') at (18.44221, 0.53985);
        \coordinate (D') at (15.10862, -4.75041);
        \coordinate (A) at (3.29612, -3.80831);
        \coordinate (C+) at ($(D)!1.8!(C)$);
        \coordinate (B+) at ($(A)!1.4!(B)$);
        \coordinate (C'+) at ($(D')!1.8!(C')$);
        \coordinate (D+) at ($(C)!1.8!(D)$);
        \coordinate (A+) at ($(B)!1.4!(A)$);
        \coordinate (D'+) at ($(C')!1.8!(D')$);

        \draw[red, dashed] (D) -- (D+);
        \draw[red, dashed] (A) -- (A+);
        \draw[red, dashed] (D') -- (D'+);
        \draw[black, thin] (C) -- (B) -- (C'); 
        \draw[black, thin] (D) -- (A) -- (D');
        \draw[black, thin] (D) -- (C);
        \draw[black, thin] (C') -- (D');
        \draw[black, thin] (B) -- (A);
        
        \draw[red, dashed] (C') -- (C'+);
        \draw[red, dashed] (B) -- (B+);
        \draw[red, dashed] (C) -- (C+);
        \draw[blue, dashed] (D) -- (D');
        \draw[blue, dashed] (C) -- (C');
        
        \filldraw[black] (D) circle (4pt) node[anchor=east]{$D$};
        \filldraw[black] (C) circle (4pt) node[anchor=south east]{$C$};
        \filldraw[black] (B) circle (4pt) node[anchor=south east]{$B$};
        \filldraw[black] (C') circle (4pt) node[anchor=west]{$C'$};
        \filldraw[black] (D') circle (4pt) node[anchor=north west]{$D'$};
        \filldraw[black] (A) circle (4pt) node[anchor=north]{$A$};
    \end{tikzpicture}
    \caption{The characteristic property of two non-coplanar affine symmetric quadrilaterals: the blue lines are parallel. Then the red lines are either concurrent or parallel. See Proposition~\ref{th-old} and \cite[Proposition~2]{pirahmad2024area}.}
    \label{figure:properties}
\end{figure}

\begin{prop}
    \label{th-christoffel} \textup{(See \cite[Proposition~13]{pirahmad2024area}.)}
    An $m\times n$ net satisfies condition~\textup{(ii$^*$)} of Theorem~\ref{th-mxn} if and only if it has a Christoffel dual with the same (unoriented) areas of corresponding faces. 
\end{prop}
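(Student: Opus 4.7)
The plan is to reduce the statement to a local computation on a single face, classify the duals of a quadrilateral by one scalar parameter, and then impose the matching-area and edge-gluing conditions together across the net.

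First I would fix a single convex face with vertices $F_1,F_2,F_3,F_4$ (in cyclic order) whose diagonals meet at $Q$ at a nonzero angle, and set $\alpha_i:=|F_iQ|$. Writing the diagonals along two linearly independent directions $\vec e_1$ (for $F_1F_3$) and $\vec e_2$ (for $F_2F_4$), a dual quadrilateral $F'_1F'_2F'_3F'_4$ has non-corresponding diagonals parallel, so $F'_1F'_3$ lies along $\vec e_2$ and $F'_2F'_4$ along $\vec e_1$. Expanding the four side-parallelism conditions $F'_iF'_{i+1}\parallel F_iF_{i+1}$ and comparing $\vec e_1,\vec e_2$ components, I expect to obtain a single scalar relation $\alpha_i\alpha'_i=k$ valid at all four vertices (where $\alpha'_i:=|F'_iQ'|$), so the dual quadrilateral is parametrized by this single $k\in\mathbb R$ up to translation. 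Two immediate corollaries of this parametrization are the signed edge relation $F'_iF'_{i+1}=-\frac{k}{\alpha_i\alpha_{i+1}}\,F_iF_{i+1}$ and the area ratio $\mathrm{Area}(F'_1F'_2F'_3F'_4)/\mathrm{Area}(F_1F_2F_3F_4)=k^2/(\alpha_1\alpha_2\alpha_3\alpha_4)$, both obtained by direct calculation.

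Next I would assemble these local data across the net. Denote by $\alpha^{ij}_v$ the half-diagonal length of a vertex $v$ in face $p_{ij}$ and by $k_{ij}$ the scaling parameter of the putative dual of $p_{ij}$. A Christoffel dual net with matching face areas corresponds to a choice of $k_{ij}$ satisfying: (a) $k_{ij}^2=\prod_v\alpha^{ij}_v$ (areas match, from the area-ratio formula), and (b) for every pair of adjacent faces $p_{ij},p_{i'j'}$ sharing an edge $F_aF_b$, the edge scalings from the two sides agree: $k_{ij}/(\alpha^{ij}_a\alpha^{ij}_b)=k_{i'j'}/(\alpha^{i'j'}_a\alpha^{i'j'}_b)$. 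Squaring (b) and substituting (a), the $k^2$ factors cancel and the identity reduces precisely to equality of $\alpha^{ij}_c\alpha^{ij}_d/(\alpha^{ij}_a\alpha^{ij}_b)$ across the shared edge, where $c,d$ are the two vertices of each face opposite to that edge. This is the reciprocal of the opposite ratio of each face with respect to the shared edge (as defined before Theorem~\ref{th-mxn}), so condition (b) becomes exactly condition (ii$^*$).

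This would yield both implications. For the forward direction, if a Christoffel dual with equal corresponding areas exists, then (a) and (b) hold, and the derivation above forces (ii$^*$). Conversely, given (ii$^*$) I would set $k_{ij}:=+\sqrt{\prod_v\alpha^{ij}_v}$, verify (b) by reversing the computation, and then define the dual net by propagating vertices from an arbitrary basepoint $F'_{00}$ via the edge rule $F'_{i'j'}-F'_{ij}=-\frac{k_{ij}}{\alpha_a\alpha_b}(F_{i'j'}-F_{ij})$, with closure around each face loop automatic since each local dual is a closed quadrilateral by construction. The main difficulty I anticipate is the sign bookkeeping in this last step: I expect the uniform choice $k_{ij}>0$ to suffice because the edge scaling factor $-k/(\alpha_a\alpha_b)$ is then negative in both adjacent faces and hence consistent, but I would confirm this by a careful orientation check before declaring the Christoffel dual well-defined.
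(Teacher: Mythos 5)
Your proposal is correct, and I can confirm the individual computations: for a convex quadrilateral with diagonal segments $\alpha_i=|F_iQ|$, the duals are indeed parametrized by one scalar via $\alpha_i\alpha_i'=\mathrm{const}=k$, the edge rule $F_i'F_{i+1}'=-\tfrac{k}{\alpha_i\alpha_{i+1}}F_iF_{i+1}$ and the area ratio $k^2/(\alpha_1\alpha_2\alpha_3\alpha_4)$ both follow, and combining the area normalization with the edge-gluing identity across a shared edge reduces exactly to equality of opposite ratios, i.e.\ condition~(ii$^*$). The sign issue you flag does resolve as you expect: in the forward direction the un-squared gluing identity forces all $k_{ij}$ to share a sign, and in the converse the uniform choice $k_{ij}>0$ makes every edge scaling negative on both sides, so taking positive square roots is legitimate; closure then follows since each local dual closes up and the quad graph is simply connected. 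Note that the paper itself gives no proof of this proposition but defers to \cite[Proposition~13]{pirahmad2024area}; your argument is the standard local-parametrization route that the cited reference also follows, so you have essentially reconstructed that proof in self-contained form rather than found a genuinely different one.
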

\fixskip

\begin{corollary} \label{cor-deformable-Christoffel-dual} \textup{(See \cite[Corollary~4]{pirahmad2024area}.)}
A deformable $m\times n$ net has a deformable Christoffel dual.
\end{corollary}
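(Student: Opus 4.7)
My plan is to apply the classification in Theorem~\ref{th-mxn} and verify that each of the two alternative conditions characterizing deformability, (i$^*$) and (ii$^*$), is preserved (or produces a deformable net) under Christoffel duality. A preliminary observation is that any deformable net is a K\oe nigs net, so the Christoffel dual $N^*$ exists: this follows because the given continuous family of area-preserving Combescure transformations yields, by differentiation at $t=0$, an infinitesimal area-preserving Combescure transformation, and such a transformation is itself a Christoffel dual of the net by Lemma~\ref{l-infinitesimal-deformable}.

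Case (ii$^*$) is handled directly using Proposition~\ref{th-christoffel}. Since $N$ satisfies (ii$^*$), it has a Christoffel dual $N^*$ with equal unoriented areas of corresponding faces. Christoffel duality is symmetric (if $N^*$ is Christoffel dual to $N$, then $N$ is Christoffel dual to $N^*$, and the equal-areas condition is symmetric), so the reverse direction of Proposition~\ref{th-christoffel} applied to $N^*$ shows that $N^*$ also satisfies~(ii$^*$) and is therefore deformable.

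For case (i$^*$), I would argue that affine symmetry of $1\times 2$ (or $2\times 1$) sub-nets passes to the Christoffel dual. Suppose each $1\times 2$ sub-net $F_{i,j}\ldots F_{i+2,j+1}$ of $N$ has affine-symmetric faces about the common edge $F_{i+1,j}F_{i+1,j+1}$; by Proposition~\ref{th-old} this is equivalent to $F_{i,j}F_{i+2,j}\parallel F_{i,j+1}F_{i+2,j+1}$. In the Christoffel dual $N^*$, corresponding edges are parallel but rescaled by factors coming from the cross-ratios of the face diagonals. The key observation is that the affine-symmetry assumption forces these scaling factors to match on the two rows $j$ and $j+1$ in the appropriate way, so that the two sums
\[
F^*_{i,j}F^*_{i+2,j}=F^*_{i,j}F^*_{i+1,j}+F^*_{i+1,j}F^*_{i+2,j}
\quad\text{and}\quad
F^*_{i,j+1}F^*_{i+2,j+1}=F^*_{i,j+1}F^*_{i+1,j+1}+F^*_{i+1,j+1}F^*_{i+2,j+1}
\]
remain parallel vectors. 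Reapplying Proposition~\ref{th-old} then yields affine symmetry of the corresponding $1\times 2$ sub-net of $N^*$, i.e.\ condition~(i$^*$) for $N^*$, so $N^*$ is deformable by Theorem~\ref{th-mxn}.

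The main obstacle is case~(i$^*$): tracking the Christoffel scaling factors along two parallel rows of a $1\times 2$ sub-net and showing that the affine symmetry hypothesis makes the rescalings compatible. An alternative route that bypasses this bookkeeping is to use the explicit description of class~(i$^*$) nets as metric duals of cone-cylinder-type nets (cf.\ the discussion of class~(i) in Subsection~\ref{ssec:Finite:Geometric properties} and equation~\eqref{eq-p-cone-cylinder-nets}), for which the Christoffel dual admits a closed-form parameterization whose deformability can be read off directly.
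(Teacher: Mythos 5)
This corollary is not proved in the present paper; it is imported verbatim from \cite[Corollary~4]{pirahmad2024area}, so there is no in-text argument to compare against. Judged on its own, your treatment of case~(ii$^*$) is correct and is the natural one: Proposition~\ref{th-christoffel} produces a Christoffel dual with equal unoriented face areas, the relation ``is a Christoffel dual of, with equal areas'' is symmetric, so the dual again satisfies~(ii$^*$) and is deformable by Theorem~\ref{th-mxn}. Your preliminary observation, however, is flawed: Definition~\ref{def-deformable-net} only requires a \emph{continuous} family of area-preserving Combescure transforms, so ``differentiation at $t=0$'' is not available; and even for a smooth family the derivative at $t=0$ may consist of equal (zero) vectors (reparametrize $t\mapsto t^{2}$), in which case it is a \emph{trivial} infinitesimal deformation and Lemma~\ref{l-infinitesimal-deformable} yields no Christoffel dual. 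In case~(ii$^*$) existence is rescued by Proposition~\ref{th-christoffel}, but in case~(i$^*$) you still owe an argument that the net is K\oe nigs at all.

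The more serious gap is that in case~(i$^*$) your ``key observation'' is precisely the content of the proof and is nowhere established. Writing the dual edges as $F^*_{i,j+1}-F^*_{ij}=\pm(F_{i,j+1}-F_{ij})/(\nu_{ij}\nu_{i,j+1})$ for a suitable vertex function $\nu$, one gets $F^*_{i,j+2}-F^*_{ij}=\pm\nu_{i,j+1}^{-1}\bigl(u/\nu_{ij}+v/\nu_{i,j+2}\bigr)$ with $u=F_{i,j+1}-F_{ij}$, $v=F_{i,j+2}-F_{i,j+1}$. For a generic (non-planar) $1\times 2$ sub-net the direction planes spanned by $(u,v)$ and by the corresponding $(u',v')$ of the adjacent row meet only along the common direction $u+v\parallel u'+v'$, so the parallelism you need forces $u/\nu_{ij}+v/\nu_{i,j+2}\parallel u+v$, i.e.\ $\nu_{ij}=\nu_{i,j+2}$. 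This $2$-periodicity of the Christoffel factor in one lattice direction is a genuine consequence of condition~(i$^*$) that must be derived; it does not follow formally from ``affine symmetry forces the scaling factors to match.'' Your fallback via the explicit parameterization does not close the gap either, because class~(i$^*$) is strictly larger than the class of cone-cylinder nets~\eqref{eq-p-cone-cylinder-nets}: condition~(i$^*$) only makes the second differences $F_{i,j}F_{i,j+2}$ parallel across rows, so such a net is two \emph{interleaved} cone-cylinder nets (a ``truncated'' net in the sense of Subsection~\ref{ssec:Finite:Geometric properties}), and you would still need to exhibit its Christoffel dual in closed form and verify condition~(i$^*$) for it.
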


\fixskip
Within class~(i$^*$), there is a sub-class of cone-cylinder nets~\eqref{eq-p-cone-cylinder-nets} with particularly nice properties. They are characterized geometrically as follows.

\fixskip
\begin{prop}\textup{(See \cite[Proposition~11]{pirahmad2024area}.)} \label{p-cone-cylinder-nets}
An $m\times n$ net has form 
$P_{ij}=a_i+\sigma_i b_j$ for all $0 \leq i \leq m$, $0 \leq j \leq n$, 
if and only if 
the $m+1$ lines $P_{0,j}P_{0,j+1},\ldots, P_{m,j}P_{m,j+1}$ are parallel for each $0\leq j<n$ and the $n+1$ lines $P_{i,0}P_{i+1,0},\ldots, P_{i,n}P_{i+1,n}$ are either parallel or concurrent for each $0\leq i<m$. 
\end{prop}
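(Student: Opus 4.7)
Both directions follow from direct computation, so the plan is to dispatch the ``only if'' direction as a short substitution check and to concentrate on the ``if'' direction, where the main work is to linearize the $i\mapsto i+1$ step of the recursion using convexity of faces.

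For the ``only if'' direction, I substitute $P_{ij}=a_i+\sigma_i b_j$. Then $P_{i,j+1}-P_{i,j}=\sigma_i(b_{j+1}-b_j)$, so for each $j$ the $m+1$ lines of that family are parallel to $b_{j+1}-b_j$. For the other family, $P_{i+1,j}-P_{i,j}=(a_{i+1}-a_i)+(\sigma_{i+1}-\sigma_i)b_j$: if $\sigma_{i+1}=\sigma_i$ the direction is independent of $j$ and all lines are parallel; otherwise a direct substitution shows that the point $a_i-\sigma_i(a_{i+1}-a_i)/(\sigma_{i+1}-\sigma_i)$ lies on every line $P_{i,j}P_{i+1,j}$, giving concurrency.

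For the ``if'' direction, the target is to establish a recursion $P_{i+1,j}=c_i+\beta_i P_{i,j}$ with a scalar $\beta_i$ and a vector $c_i$ depending only on $i$; iterating in $i$ and setting $b_j:=P_{0,j}$, $\sigma_i:=\prod_{k<i}\beta_k$, and $a_i$ equal to the resulting telescoping vector sum then yields the desired form. Fix $i$. In the concurrent subcase the hypothesis provides a point $O_i$ and scalars $\alpha_{ij}$ with $P_{i+1,j}=O_i+\alpha_{ij}(P_{i,j}-O_i)$; in the parallel subcase, a vector $u_i$ and scalars $\mu_{ij}$ with $P_{i+1,j}=P_{i,j}+\mu_{ij}u_i$. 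It suffices to show that these scalars are independent of $j$.

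This is where the first hypothesis enters. Writing $P_{i,j+1}-P_{i,j}=\lambda_{ij}v_j$ with $v_j$ depending only on $j$, and comparing the two expressions for $P_{i+1,j+1}-P_{i+1,j}$, I obtain the relation $(\alpha_{i,j+1}-\alpha_{ij})(P_{i,j}-O_i)\parallel v_j$ in the concurrent subcase (respectively $(\mu_{i,j+1}-\mu_{ij})u_i\parallel v_j$ in the parallel subcase). The main obstacle, and really the only substantive step, is ruling out the degenerate alternatives $P_{i,j}-O_i\parallel v_j$ or $u_i\parallel v_j$: in either case the four vertices of the face $P_{i,j}P_{i+1,j}P_{i+1,j+1}P_{i,j+1}$ would be collinear, contradicting convexity of that face. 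Hence $\alpha_{i,j+1}=\alpha_{ij}$ (respectively $\mu_{i,j+1}=\mu_{ij}$), so the scalars are constant in $j$, the recursion is established, and the iteration in $i$ finishes the proof.
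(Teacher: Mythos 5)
The paper does not prove this proposition at all: it is quoted verbatim from prior work (\cite[Proposition~11]{pirahmad2024area}), so there is no in-paper argument to compare against. Judged on its own, your proof is correct and self-contained. The ``only if'' computation is right (the candidate concurrency point $Q_i=a_i-\sigma_i(a_{i+1}-a_i)/(\sigma_{i+1}-\sigma_i)$ satisfies $P_{i,j}-Q_i=\tfrac{\sigma_i}{\sigma_{i+1}-\sigma_i}(P_{i+1,j}-P_{i,j})$, so it lies on every line of the family), and the ``if'' direction correctly reduces everything to showing the scalars $\alpha_{ij}$ (resp.\ $\mu_{ij}$) are independent of $j$, which follows from the identity $(\alpha_{i,j+1}-\alpha_{ij})(P_{i,j}-O_i)=(\lambda_{i+1,j}-\alpha_{i,j+1}\lambda_{ij})v_j$ plus convexity. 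Two small points you should make explicit: (1) writing $P_{i+1,j}=O_i+\alpha_{ij}(P_{i,j}-O_i)$ presupposes $O_i\neq P_{i,j}$; if $O_i$ coincided with some vertex $P_{i,j_0}$, then the adjacent line $P_{i,j_0\pm1}P_{i+1,j_0\pm1}$ would pass through $P_{i,j_0}$, making three vertices of a face collinear, so the same convexity argument you already invoke disposes of this. (2) In the degeneration you rule out, what you actually obtain is that the three vertices $P_{i,j},P_{i+1,j},P_{i,j+1}$ are collinear (not all four a priori), but three collinear vertices already contradict convexity of the face, so the conclusion stands.
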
 
\fixskip

For the proof of Corollary~\ref{cor-L}, we need the following notions and results.
By an \emph{L-shaped net of size $m\times n$} we mean an indexed collection of $2m+2n$ points $P_{ij}$, where the indices satisfy the inequalities $0\le i\le m$, $0\le j\le n$, $\min\{i,j\}\le 1$, such that $P_{ij},P_{i+1,j},P_{i,j+1},P_{i+1,j+1}$ are consecutive vertices of a convex quadrilateral whenever $i=0$, $0\le j< n$ or $j=0$, $0\le i< m$. 
An \emph{L-shaped square net} is an L-shaped net such that \bluenew{all} the faces are coplanar non-coincident squares.

\fixskip
\begin{corollary}\textup{(See \cite[Corollary~3]{pirahmad2024area}.)}  \label{cor-LL}
    If an L-shaped net of size $m\times n$ is sufficiently close to an L-shaped square net and satisfies one of 
    conditions (i$^*$) or (ii$^*$) in Theorem~\ref{th-mxn}, then it is contained in exactly one deformable  $m\times n$ net. 
\end{corollary}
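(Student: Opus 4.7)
The strategy is to construct the extension by an inductive propagation of the missing vertices $P_{ij}$ with $i,j\ge 2$, determining each one from the previously defined vertices using either (i$^*$) or (ii$^*$) together with face planarity. Listing the missing vertices in lexicographic order $(2,2),(2,3),\dots,(m,n)$, at step $(i,j)$ the face $p_{i-1,j-1}$ is the only new face completed, so the constraints available at that step come from this face being planar and from its affine-symmetry or equal-opposite-ratio relations with the already-completed neighboring faces $p_{i-2,j-1}$ and $p_{i-1,j-2}$.

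In case (i$^*$) with affine-symmetric $1\times 2$ sub-nets, Proposition~\ref{th-old} rewrites affine symmetry of $(p_{i-1,j-2},p_{i-1,j-1})$ across their common edge as $P_{i-1,j-2}P_{i-1,j}\parallel P_{i,j-2}P_{i,j}$, restricting $P_{ij}$ to a line through $P_{i,j-2}$. Intersecting this line with the plane of $P_{i-1,j-1},P_{i,j-1},P_{i-1,j}$ yields a unique point for L-shapes near the standard: at the standard square L-net the relevant line meets the relevant plane transversally at the expected vertex, and by the implicit function theorem this transversality persists for small perturbations. The case of $2\times 1$ sub-nets is symmetric. For case (ii$^*$), the two equal-opposite-ratio equations---one from $(p_{i-1,j-2},p_{i-1,j-1})$ and one from $(p_{i-2,j-1},p_{i-1,j-1})$---combined with the planarity equation provide three algebraic constraints on $P_{ij}\in\mathbb{R}^3$, which solve uniquely near the standard by the same transversality argument. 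Convexity of the new face and dual-convexity of the whole resulting $m\times n$ net are then preserved by continuity.

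Deformability of the constructed net follows from Theorem~\ref{th-mxn} once the global condition (i$^*$) or (ii$^*$) is checked. In case (i$^*$) this is by construction, since the parallelism imposed at each inductive step is exactly the required sub-net condition. The delicate point---and the main obstacle---is case (ii$^*$): the inductive step only enforces the opposite-ratio equality with the two \emph{past} neighbors of $p_{i-1,j-1}$, while (ii$^*$) also needs it with the two \emph{future} neighbors $p_{i-1,j}$ and $p_{i,j-1}$. I would overcome this by working in parallel with the Christoffel dual: Proposition~\ref{th-christoffel} reformulates (ii$^*$) as the existence of a Christoffel dual with equal areas of corresponding faces, and this reformulation propagates by linear equations that close up automatically because the L-shape is assumed to satisfy (ii$^*$) already. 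Uniqueness of the full $m\times n$ extension is immediate from the inductive construction, since each new $P_{ij}$ is uniquely determined by earlier data.
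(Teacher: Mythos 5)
The paper does not prove this corollary --- it is imported verbatim from \cite[Corollary~3]{pirahmad2024area} --- so your argument has to stand on its own. Your bookkeeping is sound: in lexicographic order each step completes exactly one face $p_{i-1,j-1}$, that face has exactly two already-completed neighbours, and consequently \emph{every} adjacent pair of faces contributes its single equation at exactly one step. This means your ``delicate point'' in case~(ii$^*$) is not actually a problem: the relation between $p_{i-1,j-1}$ and its ``future'' neighbour $p_{i,j-1}$ is simply imposed at step $(i+1,j)$, when $p_{i,j-1}$ is the new face and $p_{i-1,j-1}$ is a past one. There is no closure condition to verify, and the Christoffel-dual detour you propose is both unnecessary and too vague to carry any weight. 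What case~(ii$^*$) genuinely requires is the nondegeneracy of the $3\times 3$ system (two opposite-ratio equations plus planarity) at the square configuration; you assert this without checking it. It does hold --- at a square the two ratio equations force the new face to be a parallelogram, with nondegenerate Jacobian --- so this half is incomplete rather than wrong.

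The genuine gap is in case~(i$^*$). Your claim that ``at the standard square L-net the relevant line meets the relevant plane transversally at the expected vertex'' is false: an L-shaped square net is \emph{planar} (its faces are coplanar squares), so the plane of $P_{i-1,j-1},P_{i,j-1},P_{i-1,j}$ is the common plane of the whole configuration and the line through $P_{i,j-2}$ with direction $P_{i-1,j-2}P_{i-1,j}$ lies entirely inside it; the ``intersection'' is a whole line, not a point. Worse, Proposition~\ref{th-old} applies only to \emph{non-coplanar} quadrilaterals, so at (and arbitrarily near) the base configuration your replacement of affine symmetry by the parallelism $P_{i-1,j-2}P_{i-1,j}\parallel P_{i,j-2}P_{i,j}$ is not legitimate: for coplanar faces affine symmetry is strictly stronger (for the square L-net, parallelism plus planarity plus convexity leaves a one-parameter family of candidates for $P_{ij}$, of which only the reflected vertex satisfies affine symmetry and yields a deformable net). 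Hence the implicit function theorem cannot be invoked at the base point, and existence, uniqueness, and ``convexity by continuity'' are all unsupported precisely in the regime the hypothesis ``sufficiently close to an L-shaped square net'' puts you in. The step can be repaired --- for instance, writing the intersection point as $P_{ij}=P_{i,j-2}+\tfrac{k}{h}\bigl(P_{i-1,j}-R\bigr)$, where $h,k$ are the signed distances of $P_{i-1,j-2},P_{i,j-2}$ from the plane of $p_{i-1,j-1}$ and $R$ is the foot of $P_{i-1,j-2}$, and observing that planarity of the face $p_{i-1,j-2}$ turns $k/h$ into a ratio of in-plane distances to the common edge, which extends continuously through the planar configuration to the value given by the reflection --- but no such argument appears. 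A smaller omission: when the L-shape satisfies both (i$^*$) and (ii$^*$) (as the square net does), uniqueness also requires checking that the two candidate extensions coincide.
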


\subsection{Proof of the classification}
\label{ssec-proof-flexible}

In this subsection, we prove Propositions~\ref{l-flexible-vs-deformable}, \ref{eq-generalized-T-net-flexion}, \ref{p-class-ii-v-parallel}, Theorem~\ref{th-flexible}, and Corollary~\ref{cor-L}. 
For the former, we need two lemmas.


\fixskip
\begin{lemma} \label{l-isotropic-flexible} 
A dual-convex $m\times n$ net is flexible in $I^3$ if and only if it has a nontrivial isotropic isometric deformation consisting of v-parallel $m\times n$ nets.
\end{lemma}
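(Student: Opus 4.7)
The plan is to handle the two directions separately, with essentially all of the content on the "only if" side. The "if" direction is immediate: a nontrivial isotropic isometric deformation consisting of v-parallel nets is, by definition, already a nontrivial isotropic isometric deformation, hence the net is flexible in $I^3$.

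For the "only if" direction, I would start with an arbitrary nontrivial isotropic isometric deformation $F_{ij}(t)$ of a dual-convex $m \times n$ net, with $F_{ij}(0) = F_{ij}$, and compose each $F_{ij}(t)$ with a continuously chosen isotropic congruence $C_t$ (with $C_0 = \mathrm{id}$) to produce a v-parallel family $\tilde{F}_{ij}(t) := C_t(F_{ij}(t))$. The construction of $C_t$ proceeds in the top view. The face condition in Definition~\ref{def-flexibility} implies that, for each $t$ and each face, the top view $\overline{F_{ij}(t)}$ has faces that are orientation-preserving Euclidean congruent to the corresponding faces of $\overline{F_{ij}}$ (since the upper-left $2\times 2$ block of the matrix $A$ in the definition of isotropic congruence is a rotation).

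Next I would invoke finite rigidity of labeled planar quad nets with convex faces of prescribed shape: fix the position and orientation of one face (a single direct planar Euclidean congruence's worth of freedom), after which every other face is uniquely assembled from the three vertices it shares with already-placed faces, the convexity and orientation fixing the reflection ambiguity across shared edges. This yields a unique orientation-preserving planar Euclidean congruence $\overline{C_t}$ taking $\overline{F_{ij}(t)}$ to $\overline{F_{ij}}$, continuous in $t$ with $\overline{C_0} = \mathrm{id}$. I would then lift $\overline{C_t}$ to an isotropic congruence $C_t$ of $I^3$ by letting it act trivially on the $z$-coordinate (i.e., setting $c_1 = c_2 = 0$ in the matrix $A$ from the definition of isotropic congruences in Subsection~\ref{ssec:isotropic}). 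By construction, $\overline{\tilde{F}_{ij}(t)} = \overline{F_{ij}}$, so all $\tilde{F}_{ij}(t)$ are v-parallel to $F_{ij}$.

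Finally I would verify that $\tilde{F}_{ij}(t)$ inherits from $F_{ij}(t)$ both the isometric deformation conditions and nontriviality. The face condition is preserved because isotropic congruences preserve isotropic congruence of quadrilaterals; the vertex condition is preserved because the curvature $\Omega$ is defined as an oriented area in the top view, which is invariant under isotropic congruences. For nontriviality, if $\tilde{F}_{ij}(t) = \tilde{C}_t(F_{ij})$ for some isotropic congruence $\tilde{C}_t$ and all $i,j$, then $F_{ij}(t) = C_t^{-1}\tilde{C}_t(F_{ij})$, contradicting the nontriviality of the original deformation. The main obstacle is the planar rigidity together with the continuous selection of $\overline{C_t}$; convexity of the faces and continuity from $t=0$ are what rule out the reflection ambiguity at each shared edge and make the lift well-defined and continuous.
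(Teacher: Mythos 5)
Your proposal is correct and follows essentially the same route as the paper: both directions are handled identically, with the "only if" part reduced to the finite rigidity of the planar top view (established face-by-face, using continuity in $t$ to kill the reflection ambiguity), after which the resulting planar Euclidean congruence is lifted to an isotropic congruence preserving the $z$-coordinate and composed with the deformation to make it v-parallel. The verification that the face condition, vertex condition, and nontriviality survive the composition matches the paper's (largely implicit) reasoning.
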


\begin{proof} The `if' part follows from the definition of a flexible net in $I^3$. Let us prove the `only if' part. Let $F_{ij}(t)$, where $0\leq i\leq m,0\leq j\leq n$, be a nontrivial isotropic isometric deformation of a dual-convex $m\times n$ net $F_{ij}$.
Then the top view 
$\overline{F_{ij}(t)}$ 
is a Euclidean isometric deformation of $\overline{F_{ij}}$.
Clearly, any $m\times n$ net in the plane is rigid, i.e., all planar Euclidean isometric deformations are trivial. (Indeed, 
let $C_{kl}(t)$ be the unique planar Euclidean congruence taking 
a face $p_{kl}(t)$ of the Euclidean isometric deformation $\overline{F_{ij}(t)}$ to the face $p_{kl}$ of the net $\overline{F_{ij}}$ and preserving the indices of the vertices. Since $\overline{F_{ij}(t)}$ depends on $t$ continuously, $C_{kl}(t)$ does, hence $C_{kl}(t)$ preserves orientation. The images of the common vertices of neighboring faces coincide under the corresponding maps, hence the orientation-preserving congruences $C_{kl}(t)$ for neighboring faces coincide as well. Thus all $C_{kl}(t)$ are equal and the deformation is trivial.)
Thus 
there is a Euclidean isometric deformation $C_t\colon \mathbb{R}^2\to \mathbb{R}^2$ such that 
$\overline{F_{ij}(t)}=C_t(\overline{F_{ij}})$ for all $0\leq i\leq m,0\leq j\leq n$.
Extend $C_t$ to an isotropic isometric deformation of $I^3$ preserving the $z$-coordinate; it is still denoted by $C_t$. Then $C_t^{-1}(F_{ij}(t))$ is the desired nontrivial isotropic isometric deformation consisting of v-parallel $m\times n$ nets. 
\end{proof}


\begin{lemma} \label{lem-col-L}
    There exists a unique $m\times n$ net 
    with a given pair of parameter lines from distinct families and a given planar $m\times n$ net in the top view.
\end{lemma}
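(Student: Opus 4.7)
The plan is to reconstruct the vertices $F_{ij}$ one at a time, using the prescribed top view to pin down the single missing coordinate. By hypothesis we are given two parameter lines from distinct families; up to relabeling, this supplies the vertices $F_{i,0}$ for $0\le i\le m$ and $F_{0,j}$ for $0\le j\le n$, as well as the top views $\overline{F_{ij}}$ of every remaining vertex. I would proceed by induction on $i+j$, showing that for all $i,j\ge 1$ the vertex $F_{ij}$ is uniquely determined by $F_{i-1,j-1}$, $F_{i,j-1}$, $F_{i-1,j}$ and $\overline{F_{ij}}$.

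For the inductive step, recall that the top views $\overline{F_{i-1,j-1}}$, $\overline{F_{i,j-1}}$, $\overline{F_{i-1,j}}$ are three consecutive vertices of the convex quadrilateral $\overline{p_{i-1,j-1}}$ in the given planar $m\times n$ net, hence are non-collinear. Consequently the plane $\pi$ spanned by $F_{i-1,j-1}$, $F_{i,j-1}$, $F_{i-1,j}$ in $I^3$ is non-isotropic, and the isotropic line above $\overline{F_{ij}}$ meets $\pi$ in exactly one point. That point must be $F_{ij}$, since the face $p_{i-1,j-1}$ of an $m\times n$ net is planar. This simultaneously proves uniqueness and gives the candidate construction for existence.

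To complete existence, I would verify that the quadrilateral $F_{i-1,j-1}F_{i,j-1}F_{ij}F_{i-1,j}$ thus produced is actually convex. Since all four vertices lie in the non-isotropic plane $\pi$, the top-view projection restricts to a linear bijection $\pi\to\{z=0\}$, which preserves convexity; the image is the face $\overline{p_{i-1,j-1}}$, which is convex by hypothesis. Therefore $p_{i-1,j-1}$ is a convex planar quadrilateral, and the induction closes.

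I do not anticipate a serious obstacle. The one point that must not be overlooked is that $\pi$ be non-isotropic, for otherwise the isotropic preimage of $\overline{F_{ij}}$ would either miss $\pi$ or lie inside it; this is automatic from the non-collinearity of three vertices of a convex planar quadrilateral.
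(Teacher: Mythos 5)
Your proof is correct and follows essentially the same route as the paper's: an inductive lifting in which each new vertex is the unique point over its prescribed top view lying in the plane of the three already-constructed vertices of its face, with convexity inherited from the planar net. The only difference is that you spell out why that plane is non-isotropic (non-collinearity of the three top views), a point the paper leaves implicit.
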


\begin{proof}
      Without loss of generality, assume that the two parameter lines contain vertices \(F_{ij}\) with \(i=0\) or \(j=0\).
      We then determine the remaining vertices 
      inductively. 
      Suppose that the points \(F_{i,j+1}\), \(F_{ij}\), \(F_{i+1,j}\) have already been determined. Then the point \(F_{i+1,j+1}\) is uniquely determined because its top view is given and it lies in the plane through 
      \(F_{i,j+1}\), \(F_{ij}\), \(F_{i+1,j}\). The resulting face is 
      convex because the top view is.
\end{proof}

\begin{lemma} \label{l-deformable-vs-top-view}
    Let the planes of the faces of an $m\times n$ net be non-isotropic. Then the $m\times n$ net is deformable if and only if 
    its top view is.
\end{lemma}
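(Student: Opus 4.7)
The plan is to establish a bijective correspondence between continuous families of parallel $m\times n$ nets containing $F_{ij}$ in $I^3$ (with non-isotropic face planes) and continuous families of parallel $m\times n$ nets containing $\overline{F_{ij}}$ in the $xy$-plane, and to verify that this correspondence preserves both face-area invariance and pairwise non-congruence (in the isotropic sense upstairs, in the Euclidean sense downstairs).

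The first observation is that along any family of parallel nets the slopes $(a_{kl},b_{kl})$ of the face planes $z=a_{kl}x+b_{kl}y+c_{kl}$ are invariant: if $A'B'C'D'$ is parallel to a planar face $ABCD$, then starting from any chosen $A'$ the remaining vertices $B',C',D'$ are obtained by adding scaled copies of the edge vectors $\overrightarrow{AB},\overrightarrow{BC},\overrightarrow{CD}$, all lying in the plane of $ABCD$. Hence the new face lies in a parallel plane, the top-view projection sends families of parallel nets in $I^3$ to families of parallel nets in $\mathbb{R}^2$, and the Euclidean area of each face $p_{kl}(t)$ equals its top-view area times the constant factor $\sqrt{1+a_{kl}^2+b_{kl}^2}$. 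In particular, area preservation of faces upstairs is equivalent to area preservation of top-view faces downstairs.

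Conversely, given a family of parallel nets $\overline{F_{ij}(t)}$ containing $\overline{F_{ij}}$, I will lift it to $I^3$ inductively over the faces, starting from $F_{00}(t):=F_{00}$. On each new face $p_{kl}$, fix $c_{kl}(t)$ so that the plane $z=a_{kl}x+b_{kl}y+c_{kl}(t)$ contains an already-lifted vertex of $p_{kl}$, and lift the remaining vertices of $p_{kl}$ onto this plane. The consistency of this procedure on each shared edge between two adjacent faces reduces to the identity
\[
(a_{kl}-a_{k'l'})\bigl(x_{i+1,j}(t)-x_{ij}(t)\bigr)+(b_{kl}-b_{k'l'})\bigl(y_{i+1,j}(t)-y_{ij}(t)\bigr)=0,
\]
which holds because $\overline{F_{i+1,j}(t)}-\overline{F_{ij}(t)}$ is a scalar multiple of $\overline{F_{i+1,j}}-\overline{F_{ij}}$ (parallelism), and the analogous identity at $t=0$ is precisely the planarity of the two original adjacent faces. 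I expect this consistency check to be the main technical obstacle.

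Non-triviality transfers in both directions. Every isotropic congruence of $I^3$ descends to a Euclidean congruence of the top view, so pairwise isotropic-non-congruent $F_{ij}(t)$ yield pairwise Euclidean-non-congruent top views. For the reverse direction, any Euclidean congruence relating two members of a family of parallel planar nets must preserve every edge direction, so its rotational part lies in $\{\pm I\}$; by continuity from the identity at $t=0$ this rotational part equals $I$ throughout, and the congruence reduces to a pure translation. A short direct computation using the lifting construction above shows that a top-view translation lifts to a pure horizontal translation in $I^3$, which is an isotropic congruence; hence trivial top-view families lift to trivial $I^3$ families, completing the equivalence.
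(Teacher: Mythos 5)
Your overall strategy is the same as the paper's: project for one implication and lift inductively for the other. You are more explicit in two places where the paper is terse --- the observation that the Euclidean area of a face equals its top-view area times the constant slope factor $\sqrt{1+a_{kl}^2+b_{kl}^2}$ (so area preservation transfers both ways), and the edge-consistency check for the lift. The paper organizes the lift differently (it first lifts two boundary parameter lines and then invokes Lemma~\ref{lem-col-L} to reconstruct the net from these and the planar top view), but your face-by-face lift with the plane constants $c_{kl}(t)$ is an equivalent induction, and your consistency identity is correct: it holds at $t=0$ by planarity of the original adjacent faces and propagates because the top-view edge at time $t$ is a scalar multiple of the original one.

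The one defect is in your final paragraph on non-congruence: the two transfer facts are each attached to the wrong direction of the equivalence. The descent of isotropic congruences to Euclidean congruences of the top view gives ``$3$D nets congruent $\Rightarrow$ top views congruent,'' i.e.\ non-congruent top views have non-congruent lifts --- which is what the \emph{lifting} direction needs, yet you invoke it for the forward direction. Conversely, your argument that a Euclidean congruence between members of a parallel planar family reduces to a translation, which lifts to an isotropic congruence (note that parallel edges with equal top views are equal vectors, so the $3$D nets then differ by a single translation, possibly with a vertical component --- still an isotropic congruence), establishes ``top views congruent $\Rightarrow$ $3$D nets congruent,'' which is exactly what the \emph{projection} direction needs to conclude that pairwise non-congruent $3$D nets have pairwise non-congruent top views; yet you deploy it only to show that trivial top-view families lift to trivial families, which is the converse of what the lifting direction requires. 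Both ingredients are present and correct, so the proof closes once you swap their roles, but as written each non-congruence claim is supported by the converse of the implication it actually requires.
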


\begin{proof}
      Assume that an $m\times n$ net $F_{ij}$ is deformable, and $F_{ij}(t)$ is a family of its noncongruent area-preserving Combescure transformations. Then $\overline{F_{ij}(t)}$ is a family of noncongruent area-preserving Combescure transformations of the top view $\overline{F_{ij}}$. 

      Conversely, if the top view $\overline{F_{ij}}$ is deformable 
      and $\overline{F_{ij}}(t)$ is a family of its noncongruent area-preserving Combescure transformations, then we may assume that $\overline{F_{00}}(t)=\overline{F_{00}}$. Set
      $F_{00}(t)=F_{00}$ and construct $F_{i0}(t)$ for $i=1,\dots,m$ inductively as the unique point with the top view $\overline{F_{i0}}(t)$ on the line passing through $F_{i-1,0}(t)$ parallel to $F_{i-1,0}F_{i0}$. Analogously, construct $F_{0j}(t)$ for $j=1,\dots,n$. Lift $\overline{F_{ij}}(t)$ to a net ${F_{ij}(t)}$ using Lemma~\ref{lem-col-L}. By induction, ${F_{ij}(t)}$ is a family of noncongruent area-preserving Combescure transformations of the $m\times n$ net ${F_{ij}}$.
\end{proof}


\begin{proof}[Proof of Proposition~\ref{l-flexible-vs-deformable}]
    Let us prove the equivalence \textbf{(I)}$\Leftrightarrow$\textbf{(D)}.
    By Lemma~\ref{l-isotropic-flexible}, condition~\textbf{(I)} is equivalent to having a nontrivial isotropic isometric deformation consisting of
    v-parallel $m\times n$ nets. The face condition holds automatically for such an isotropic isometric deformation, and the metric dual nets are parallel. The vertex condition
    is equivalent to the condition that the corresponding faces of the metric dual nets have the same areas, hence to condition~\textbf{(D)}.
    We get \textbf{(I)}$\Leftrightarrow$\textbf{(D)}.
    
    Let us prove the equivalence \textbf{(I)}$\Leftrightarrow$\textbf{(R)}.
    Clearly, a flexible dual-convex $m\times n$ net in $I^3$ is also infinitesimally flexible in $I^3$ (this follows, e.g., from \textbf{(I)}$\Leftrightarrow$\textbf{(D)}, Proposition~\ref{prop-infinitesimal-characterization}\textbf{(I,K)}, and Corollary~\ref{cor-deformable-Christoffel-dual}). Then, by Proposition~\ref{prop-infinitesimal-characterization} it has 
    a reciprocal-parallel net. By Corollary~\ref{p-reciprocal-parallel-top-view} the top views of the reciprocal-parallel and the metric dual nets are related by a rotation through $\pi/2$ followed by the Christoffel duality. By the equivalence \textbf{(I)}$\Leftrightarrow$\textbf{(D)}, the initial $m\times n$ net is flexible if and only if the
    metric dual, or, equivalently (by Lemma~\ref{l-deformable-vs-top-view}), its top view, is deformable. By Corollary~\ref{cor-deformable-Christoffel-dual}, the rotation and the Christoffel duality preserve the deformability, and the result follows.
\end{proof}



\begin{proof}[Proof of Theorem~\ref{th-flexible}]
    According to Proposition~\ref{l-flexible-vs-deformable},  a dual-convex $m\times n$ net with faces $p_{ij}$ is flexible in \( I^3 \) if and only if the metric dual \((m-1) \times (n-1)\) net \( p_{ij}^* \) is deformable. The latter is equivalent to one of conditions
    (i$^*$) or (ii$^*$) of Theorem~\ref{th-mxn}. Consider the two possibilities separately. Notice that no two neighboring faces of the metric dual net \( p_{ij}^* \) are coplanar, otherwise 
    their metric dual neighboring vertices of the original $m\times n$ net $p_{ij}$ coincide, contradicting the definition of an $m\times n$ net.
    
    \emph{Case} (i$^*$): \emph{the net \( p_{ij}^* \) satisfies condition}~(i$^*$). By Proposition~\ref{th-old}, this is equivalent to \( p_{k,0}^*p_{k+2,0}^* \parallel p_{k,1}^*p_{k+2,1}^* \parallel \ldots \parallel p_{k,n-1}^*p_{k+2,n-1}^* \) for each \( 0 \leq k \leq m-3 \), or \( p_{0,l}^*p_{0,l+2}^* \parallel p_{1,l}^*p_{1,l+2}^* \parallel \ldots \parallel p_{m-1,l}^*p_{m-1,l+2}^* \) for each \( 0 \leq l \leq n-3 \), because neighboring faces of the net \( p_{ij}^* \) are not coplanar. Several non-isotropic lines are parallel if and only if their metric duals lie in one isotropic plane because the metric dual of their intersection point (at infinity) is an isotropic plane containing the metric duals of the lines. We arrive at equivalent condition~(i).

     \emph{Case} (ii$^*$): \emph{the net \( p_{ij}^* \) satisfies condition}~(ii$^*$). This means that any two neighboring faces have equal opposite ratios with respect to the common side. The opposite ratio of a face with respect to an edge equals the opposite ratio of the metric dual vertex with respect to the metric dual edge because the metric duality takes isotropic distances to isotropic angles and reverses inclusions. We arrive at equivalent condition~(ii). 

    Hence, both cases indicate that the net is flexible if and only if at least one of the conditions (i) or (ii) of the theorem holds.
    \end{proof}

\begin{proof}[Proof of Corollary~\ref{cor-L}]
    Consider the metric dual of the wide L-shaped net. 
    It is an $(m-1)\times(n-1)$ L-shaped net 
    sufficiently close to the L-shaped net with the vertices $(2i+1,2j+1,i^2+j^2+i+j)$, where $i\in\{0,1\}$, $0\le j< n$ or $j\in\{0,1\}$, $0\le i< m$. The top view of the former is sufficiently close to the top view of the latter, which has square faces. By 
    Corollary~\ref{cor-LL} and Proposition~\ref{th-old}, the top view of the former is contained in a unique deformable $(m-1)\times(n-1)$ net. By Lemma~\ref{lem-col-L}, that deformable net lifts to a unique $(m-1)\times(n-1)$ net containing the 
    L-shaped net. The lifted net is deformable if and only if its top view is (Lemma~\ref{l-deformable-vs-top-view}). By Proposition~\ref{l-flexible-vs-deformable}, 
    we are done.
\end{proof}

\begin{proof}[Proof of Proposition~\ref{eq-generalized-T-net-flexion}]
    %
    %
    %
    An \(m\times n\) generalized T-net \(F_{ij}\) (with \(0 \leq i \leq m\), \(0 \leq j \leq n\)) is metric dual to some \((m+1)\times (n+1)\) cone-cylinder net \(P_{ij}\) (where \(0 \leq i \leq m+1\) and \(0 \leq j \leq n+1\)) by Proposition~\ref{p-cone-cylinder-nets} because a parameter line lying in an isotropic plane corresponds to parallel edges, and a planar parameter line corresponds to edges contained in either parallel or concurrent lines. Here all the non-boundary vertices 
    of the net \(P_{ij}\) are determined by the net \(F_{ij}\) so that the non-boundary faces are convex (by Lemma~\ref{l-dual-convex}), and the boundary vertices can easily be chosen so that the boundary faces are also convex (by taking $\sigma_0$ and $\sigma_{m+1}$ in~\eqref{eq-p-cone-cylinder-nets} close enough to $\sigma_1$ and $\sigma_{m}$ respectively). 
    Since the resulting net \(P_{ij}\) has convex faces, it follows that
    $\sigma_0,\dots,\sigma_{m+1}$ in~\eqref{eq-p-cone-cylinder-nets} have the same sign. Assume that all $\sigma_i>0$, otherwise reverse the signs of all $\sigma_i$ and $b_j$.
        
    Thus, it suffices to find the metric dual \(F_{ij}\) of each face of the cone-cylinder net \(P_{ij}\). The plane containing the points $P_{ij}, P_{i+1,j}, P_{i+1,j+1}, P_{i,j+1}$ is given by 
    \[
        z \;=\;
        -\,\frac{N \cdot e_1}{N \cdot e_3}\,x
        \;-\;
        \frac{N \cdot e_2}{N \cdot e_3}\,y
        \;+\;
        \frac{N \cdot P_{ij}}{N \cdot e_3},
    \]
    where $N \;=\; \bigl(P_{i,j+1} - P_{ij}\bigr)\,\times\,\bigl(P_{i+1,j} - P_{ij}\bigr)$ is the normal vector and we use the Euclidean dot and cross products. By the identity $(a \times b)\cdot c \;=\; \det(a,b,c)$, the metric dual of this plane is the point
    \begin{multline*}
        F_{ij} 
        =-\frac{1}{
          \det\bigl(e_3,\; P_{i,j+1}-P_{ij},\; P_{i+1,j}-P_{ij}\bigr)
        }
        \begin{pmatrix}
          \det\!\bigl(e_1,\; P_{i,j+1}-P_{ij},\; P_{i+1,j}-P_{ij}\bigr)\\[6pt]
          \det\!\bigl(e_2,\; P_{i,j+1}-P_{ij},\; P_{i+1,j}-P_{ij}\bigr)\\[6pt]
          \det\!\bigl(P_{ij},\; P_{i,j+1}-P_{ij},\; P_{i+1,j}-P_{ij}\bigr)
        \end{pmatrix}.
    \end{multline*}
    Substituting $P_{ij}$ from~(\ref{eq-p-cone-cylinder-nets}) 
    and canceling common factors $\sigma_i\ne 0$ in the numerators and denominators yields the desired 
    formula. 

    Finally, we show that $F_{ij}(t)$ defined in~(\ref{eq:is-is-def}) is a non-trivial isotropic isometric deformation. 
    It is straightforward that $F_{ij}(0) = F_{ij}$ and 
    the corresponding points $F_{ij}(t)$ have the same top view, implying the face condition. By~\eqref{eq-def-curvature}, their isotropic Gaussian curvatures of corresponding vertices coincide as well, because 
    \begin{multline*}
        \Omega(F_{ij}(t)):= \mathrm{Area}\left(\overline{P_{ij}(t)}\,\overline{P_{i+1,j}(t)}\,\overline{P_{i+1,j+1}(t)}\,\overline{P_{i,j+1}(t)}\right)=\\
        =\mathrm{Area}\left(\overline{P_{ij}}\,\overline{P_{i+1,j}}\,\overline{P_{i+1,j+1}}\,\overline{P_{i,j+1}}\right)=\Omega(F_{ij}).
    \end{multline*}
    Here the areas are equal because $P_{ij}(t)$ is area-preserving by Proposition~\ref{p-cone-cylinder-deformation}. The isotropic isometric deformation $F_{ij}(t)$ is nontrivial because $P_{ij}(t)$ are not translations of each other.
    %
\end{proof}

\begin{proof}[Proof of Proposition~\ref{p-class-ii-v-parallel}]
By Lemmas~\ref{l-mixed-deformable} and~\ref{l-infinitesimal-deformable},
the metric duality takes a pair of v-parallel dual-convex $m\times n$ nets with vanishing mixed curvature of the corresponding vertices to a pair of Clifford dual $(m-1)\times (n-1)$ nets.
The $m\times n$ nets have in addition opposite curvatures at the corresponding vertices if and only if the $(m-1)\times (n-1)$ nets have the same (unoriented) areas of the corresponding faces (their oriented areas have opposite signs automatically). By Proposition~\ref{th-christoffel}, the $(m-1)\times (n-1)$ net satisfying condition~(ii$^*$) of Theorem~\ref{th-mxn} and only they are included into such pairs. By the metric duality, the result follows.   
\end{proof}

\section{Smooth flexibility}
\label{sec-smooth}



\subsection{Definition}


  Now we proceed to smooth flexible nets in $I^3$ and recall their definition, essentially given in~\cite{isometric-isotropic}. We focus on finite flexibility; the infinitesimal one is discussed in~\cite{isometric-isotropic}.


Fix a rectangle \( U = [\alpha, \beta] \times [\gamma, \delta] \subset \mathbb{R}^2 \).
A \emph{conjugate net} is a smooth function \( f\colon U\to I^3 \) 
such that for each $(u,v)\in U$ the derivatives $f_u(u,v)$ and $f_v(u,v)$ span a plane and $f_{uv}(u,v)$ is parallel to the plane. 
A conjugate net is \emph{admissible} if the plane spanned by $f_u(u,v)$ and $f_v(u,v)$ is non-isotropic. A \emph{
family of admissible conjugate nets} is a continuous 
function \( f\colon U\times [0,1]\to I^3 \) such that \( f(u,v, t) \) is an admissible conjugate net for each $t\in [0,1]$.

\fixskip
\begin{definition} \label{def-isom-surf} (Cf.~\cite[Definition~20]{isometric-isotropic}.)
An \emph{isotropic isometric deformation} of an admissible conjugate net \( f(u,v) \) 
is a 
family of admissible conjugate nets 
\( f(u,v, t) \) 
with \( f(u,v,0) = f(u,v) \) 
such that for each $(u,v,t)\in U\times [0,1]$ the two points $f(u,v,t)$ and $f(u,v)$
have the same top view and 
    the same isotropic Gaussian curvature.

An isotropic isometric deformation is \emph{trivial} if for each $t\in [0,1]$ there is an isotropic congruence $C_t$ such that $f(u,v, t)= C_t(f(u,v))$ for all $(u,v)\in U$. 
An admissible conjugate net is (\emph{one-sided}) \emph{flexible in $I^3$} if it has a nontrivial isotropic isometric deformation. 
\end{definition}

\subsection{Statement of the result}


Analogously to the discrete case, the classification of smooth flexible nets in $I^3$ reduces to the classification of smooth deformable nets. Note that the latter is more challenging than the discrete one; see Conjecture~\ref{conj-smooth-deformable} below.

We now give the relevant notions. Two conjugate nets $f,f^+\colon U\to \mathbb R^3$ are \emph{parallel} or \emph{Combescure transforms} of each other if $f_u(u,v)\parallel f_u^+(u,v)$ and $f_v(u,v)\parallel f_v^+(u,v)$ for each $(u,v)\in U$. 
A Combescure transform $f^+$ of a conjugate net $f$ is \emph{area-preserving}, if determinants of the first fundamental forms agree,
   $$(f_u\cdot f_u)(f_v\cdot f_v)-( f_u\cdot f_v)^2=(f^+_u\cdot f^+_u)(f^+_v\cdot f^+_v)-( f^+_u\cdot f^+_v)^2,$$ 
%
at each point $(u,v)\in U$, where $a \cdot b$ denotes the Euclidean scalar product of vectors $a$ and $b$.
Two conjugate nets $f,f^+\colon U\to \mathbb R^3$ are \emph{congruent}, if $f^+=C\circ f$ for some (isotropic or Euclidean) congruence $C\colon \mathbb R^3\to \mathbb R^3$.

\fixskip
\begin{definition} \label{def-smooth-deformable-net} \cite{pirahmad2024area}
A conjugate net $f(u,v)$ is called \emph{deformable} if it belongs to a continuous family of pairwise non-congruent (in Euclidean or isotropic sense) area-preserving Combescure transforms $f^+(u,v,t)$, where $t\in [0,1]$.
\end{definition}
\fixskip


For an admissible conjugate net $f\colon U\to I^3$, its \emph{metric dual} is the function $f^*\colon U\to I^3$ that takes a point $(u,v)\in U$ to the point $f^*(u,v)$ metric dual to the tangent plane at $f(u,v)$ (i.e., the plane passing through $f(u,v)$ parallel to $f_u(u,v)$ and $f_v(u,v)$).
The metric dual $f^*$ is also an admissible conjugate net if the isotropic Gaussian curvature of $f$ vanishes nowhere. (This follows from the equivalent definitions of conjugate directions in \cite[Section~60]{K91}.)
We have the following analog of Propositions~\ref{prop-infinitesimal-characterization} and~\ref{l-flexible-vs-deformable}; cf.~\cite[Proposition~21]{isometric-isotropic}.


\fixskip
\begin{prop} \label{l-smooth-flexible-vs-deformable}
An admissible conjugate net with nowhere vanishing isotropic Gaussian curvature is flexible in $I^3$ if and only if the metric dual net is deformable.
\end{prop}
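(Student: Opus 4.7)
My plan is to mirror the proof of the discrete equivalence \textbf{(I)}$\Leftrightarrow$\textbf{(D)} from Proposition~\ref{l-flexible-vs-deformable}. A crucial simplification over the discrete case is that the v-parallel reduction handled there by Lemma~\ref{l-isotropic-flexible} is already built into the smooth Definition~\ref{def-isom-surf}, which requires $f(u,v,t)$ and $f(u,v)$ to share the same top view at each $(u,v)$, so no separate reduction is needed.

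I would work in top-view coordinates, writing $f(u,v)=(x(u,v),y(u,v),z(x,y))$ locally with Hessian $H$ of $z$, and setting $\xi_u:=(x_u,y_u)$, $\xi_v:=(x_v,y_v)$. A direct chain-rule calculation yields $f^*_u=(H\xi_u,\,(x,y)\cdot H\xi_u)$ and analogously for $v$, while the Lagrange identity gives
\[
  |f^*_u\times f^*_v|^2 \;=\; K^2\,\det(\xi_u,\xi_v)^2\,(1+x^2+y^2),
\]
where $K=\det H$ is the isotropic Gaussian curvature. The conjugate-net condition for $f$ reduces to the single scalar equation $\xi_v^{\mathrm T}H\xi_u=0$, which places $H\xi_u$ in the one-dimensional line $\xi_v^{\perp}$.

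For the forward implication, an isotropic isometric deformation $f(u,v,t)$ shares the top view, hence the parameters $\xi_u,\xi_v$, and the Hessian $H^t$ of $z^t(x,y)$ satisfies the same conjugate-net equation $\xi_v^{\mathrm T}H^t\xi_u=0$. Hence $H\xi_u\parallel H^t\xi_u$, which immediately gives $f^*_u\parallel f^{*t}_u$ (and likewise in $v$), so $g^t:=f^{*t}$ is a Combescure transform of $f^*$. Equality of Gaussian curvatures, substituted into the displayed formula, delivers area-preservation. For the converse, given an area-preserving Combescure family $g^t$ of $f^*$, I set $f^t:=(g^t)^*$; since Combescure forces the tangent planes of $g^t$ and $f^*$ to be parallel at each $(u,v)$, and the top view of the metric dual is the slope of the tangent plane, the nets $f^t$ and $f=(f^*)^*$ share a top view. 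Area-preservation then yields equality of Gaussian curvatures via the same formula.

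The step I expect to be the main obstacle is the transfer of nontriviality between the two definitions. One direction is clean: within a Combescure family, any Euclidean congruence between two members must preserve every tangent-plane direction, and a nontrivial rotation cannot fix a two-parameter family of plane directions, so the congruence reduces to a pure translation---hence is isotropic; combined with the fact that metric duality sends isotropic congruences to isotropic congruences (as checked directly from the matrix form of $G^6$), pairwise non-congruence of $g^t$ transfers to pairwise non-isotropic-congruence of $f^t$. The other direction is subtler because ``nontrivial'' in Definition~\ref{def-isom-surf} only asserts that not all $f^t$ are isotropically congruent to $f^0$, which is weaker than pairwise non-congruence; I would remedy this by passing to a sub-interval of the parameter $t$ on which the induced curve in the quotient by the $6$-parameter isotropic congruence group is injective.
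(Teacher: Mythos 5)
Your proposal is correct in substance and follows the same skeleton as the paper's proof, which derives the proposition from a single lemma (Lemma~\ref{l-isomtric-deformation-vs-deformation}) asserting exactly the dictionary you set up: shared top view $\Leftrightarrow$ the metric duals are Combescure transforms, equal curvature $\Leftrightarrow$ equal areas, and nontrivial $\Leftrightarrow$ not all congruent. Your observation that no analogue of the v-parallel reduction (Lemma~\ref{l-isotropic-flexible}) is needed is also how the paper proceeds. Where you differ is in how the two middle equivalences are established: the paper argues synthetically, citing classical facts (the metric dual of an admissible conjugate net with nonvanishing curvature is again one; the total curvature of a patch equals the oriented isotropic area of its dual patch; the reciprocal relation between dual curvatures), whereas you verify everything by direct computation in a graph chart --- the formula $f^*_u=(H\xi_u,\,(x,y)\cdot H\xi_u)$, the conjugacy equation $\xi_v^{\mathrm T}H\xi_u=0$ forcing $H^t\xi_u\parallel H\xi_u$ inside the line $\xi_v^{\perp}$, and the identity $|f^*_u\times f^*_v|^2=K^2\det(\xi_u,\xi_v)^2(1+x^2+y^2)$, which is correct since $f^*_u\times f^*_v=\det(H\xi_u,H\xi_v)\,(-x,-y,1)$. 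This buys a self-contained argument at the cost of working locally in nonparametric form.

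Two soft spots remain. First, in the converse direction your area identity only yields $K_t^2=K^2$, i.e.\ $K_t=\pm K$ pointwise; you must rule out the sign flip (equivalently, that the Combescure factors satisfy $\lambda\mu=+1$ rather than $-1$), say by a connectedness argument in $(u,v)$ together with the normalization at $t=0$ --- the paper sidesteps this by phrasing the middle step in terms of \emph{oriented} isotropic areas. Second, you rightly notice a mismatch that the paper itself glosses over (Definition~\ref{def-isom-surf} requires only that not all $f(\cdot,\cdot,t)$ be congruent to $f(\cdot,\cdot,0)$, while Definition~\ref{def-smooth-deformable-net} demands pairwise non-congruence), but your repair does not work as stated: a continuous non-constant curve into the quotient by the congruence group need not be injective on any subinterval (compare a nowhere-monotone continuous real function). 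A correct repair needs more care, e.g.\ reparametrizing by a continuous congruence-invariant quantity that separates the classes actually occurring in the family. Neither issue is fatal, and the second is equally present in the paper's own passage from Lemma~\ref{l-isomtric-deformation-vs-deformation} to the proposition.
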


\fixskip

Note that 
vanishing isotropic Gaussian curvature (at \emph{each} point of a non-planar surface) 
implies developability in Euclidean geometry and flexibility in $I^3$:
a nontrivial isotropic isometric deformation is then obtained by 
the composition with the transformation
$(x,y,z)\mapsto (x,y,(1-t)z)$.


\subsection{A class of flexible nets}
\label{ssec-classification-smooth-deformable}

Let us introduce a class of smooth nets flexible in $I^3$. 

The desired nets are metric dual to the so-called scale-translational surfaces. 
Recall that a \emph{smooth cone-cylinder net}, or \emph{scale-translational surface} with 
base curves $a(u)$ and $b(v)$ and scaling function $\sigma(u)$, is
\begin{equation}
    f^*(u,v)=a(u) + \sigma(u) b(v), \quad  (u,v)\in U, 
    \label{eq:cone-cylinder}
\end{equation} 
for some smooth functions $a\colon [\alpha,\beta]\to \mathbb{R}^3$, $b\colon [\gamma,\delta]\to \mathbb{R}^3$, $\sigma \colon [\alpha,\beta]\to \mathbb{R}$ such that 
$f^*_u(u,v)\nparallel f^*_v(u,v)$ for each $(u,v)\in U$, i.e., 
$ a'(u)+\sigma'(u)b(v)\nparallel \sigma(u)b'(v)$. In particular, $\sigma(u) \ne 0$ everywhere. Assume without loss of generality that $\sigma(u)>0$, otherwise change the sign of both $\sigma$ and $b$.
We see that $f^*(u,v)$ is a conjugate net; here $f^*_{uv}=\sigma'(u)b'(v)$ is even parallel to $f^*_v=\sigma(u)b'(v)$. Smooth cone-cylinder nets are a particular case of double cone-nets studied in \cite{kilian2023smooth}.

\fixskip
\begin{prop} \textup{(See \cite[Proposition~14]{pirahmad2024area})}
    \label{th-deform-surf-i} 
    A 
    conjugate net 
    has form~\eqref{eq:cone-cylinder} with $\sigma(u)>0$ if and only if 
    the tangents to the $u$-parameter lines at points of each $v$-parameter line are concurrent or parallel 
    and the tangents to the $v$-parameter lines at points of each $u$-parameter line are parallel. 
\end{prop}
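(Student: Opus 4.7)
The plan is to prove the stated equivalence by showing each direction separately, with the forward direction reducing to direct verification and the reverse direction requiring a factorization argument on the scalar coefficient of $f^*_v$.

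For the forward direction, assume $f^*(u,v)=a(u)+\sigma(u)b(v)$. Then $f^*_v=\sigma(u)b'(v)$, which is parallel to $b'(v)$ independently of $u$; hence along any $u$-parameter line the tangents to the $v$-parameter lines are all parallel. For the $u$-tangents at a fixed $u_0$, a direct check shows that when $\sigma'(u_0)\ne 0$ the point $P(u_0)=a(u_0)-\frac{\sigma(u_0)}{\sigma'(u_0)}a'(u_0)$ lies on every line $\{f^*(u_0,v)+t\,f^*_u(u_0,v):t\in\mathbb{R}\}$, giving concurrency, while for $\sigma'(u_0)=0$ the direction $f^*_u(u_0,v)=a'(u_0)$ is independent of $v$, giving parallelism.

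For the reverse direction, I would first use the parallelism of $v$-tangents along each $u$-line to write $f^*_v(u,v)=\lambda(u,v)B(v)$ for a scalar $\lambda$ and a vector $B(v)$ depending only on $v$. Differentiating in $u$ yields $f^*_{uv}=\lambda_u B(v)=(\lambda_u/\lambda)f^*_v$, so $f^*_{uv}\parallel f^*_v$. Next, fixing $u_0$ and using the concurrent-or-parallel condition on the family of lines $L_v=f^*(u_0,v)+\mathbb{R}\,f^*_u(u_0,v)$, I would treat the two cases. In the concurrent case, write the common point as $P(u_0)=f^*(u_0,v)+s(v)f^*_u(u_0,v)$ and differentiate in $v$ to obtain $s'(v)f^*_u+(1+s(v)\lambda_u/\lambda)f^*_v=0$; by linear independence of $f^*_u,f^*_v$ this forces $s$ to depend only on $u_0$ and $\lambda_u/\lambda=-1/s(u_0)$ to be independent of $v$. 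Hence $\partial_u\ln|\lambda|$ is a function of $u$ alone, so $\lambda(u,v)=\sigma(u)\beta(v)$. Setting $b'(v)=\beta(v)B(v)$ and integrating $f^*_v=\sigma(u)b'(v)$ in $v$ gives $f^*(u,v)=a(u)+\sigma(u)b(v)$, with the sign of $\sigma$ arranged to be positive by negating $b$ if necessary. The parallel sub-case is handled analogously and reduces to a translational surface $a(u)+b(v)$, corresponding to constant $\sigma\equiv 1$.

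The main obstacle I expect is the case analysis across different values of $u_0$: a priori the family $L_v$ may be concurrent for some $u_0$ and parallel for others, so a continuity argument is needed to produce a globally consistent representation. Additional care is required at points where $\sigma'$ vanishes (the concurrency point $P(u)$ recedes to infinity) and to verify that the factorization $\lambda=\sigma\cdot\beta$ yields smooth $a(u)$, $\sigma(u)$, $b(v)$ compatible with the non-degeneracy hypothesis $f^*_u\nparallel f^*_v$.
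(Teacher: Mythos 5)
The paper does not prove this proposition: it is imported verbatim from \cite[Proposition~14]{pirahmad2024area}, so there is no in-paper argument to compare against. Judged on its own, your proof is correct and is the natural argument. The forward direction is a direct verification and your concurrency point $P(u_0)=a(u_0)-\tfrac{\sigma(u_0)}{\sigma'(u_0)}a'(u_0)$ checks out (take $t=-\sigma(u_0)/\sigma'(u_0)$). In the reverse direction, the decomposition $f^*_v=\lambda(u,v)B(v)$, the identity $f^*_{uv}=(\lambda_u/\lambda)f^*_v$, and the differentiation of $P(u_0)=f^*(u_0,v)+s(v)f^*_u(u_0,v)$ in $v$ are all sound, and linear independence of $f^*_u,f^*_v$ (regularity of the net) gives $s'=0$ and $\lambda_u/\lambda=-1/s(u_0)$; note $s(u_0)\neq 0$ since otherwise the $v$-parameter line would degenerate to the point $P(u_0)$.

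The one "obstacle" you flag — that the family $L_v$ might be concurrent for some $u_0$ and parallel for others — is in fact not an obstacle and needs no continuity argument. In the parallel case, writing $f^*_u(u_0,v)=\mu(v)A$ and using $f^*_{uv}=(\lambda_u/\lambda)f^*_v$ together with $f^*_u\nparallel f^*_v$ forces $\lambda_u(u_0,v)/\lambda(u_0,v)=0$, which is again independent of $v$. So in \emph{both} cases $\partial_u\log|\lambda|$ is a function of $u$ alone, and the single factorization $\lambda(u,v)=\sigma(u)\beta(v)$ with $\sigma(u)=\exp\bigl(\int_{u_1}^u\partial_w\log|\lambda|\,dw\bigr)>0$ covers the mixed situation uniformly; the parallel case then simply corresponds to $\sigma'(u_0)=0$ rather than to a separate translational-surface branch. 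With that observation your argument is complete; the remaining points you raise (smoothness of $s$, positivity of $\sigma$, compatibility with $f^*_u\nparallel f^*_v$) are all immediate from regularity.
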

\fixskip

\begin{prop} \textup{(See \cite[Proposition~15]{pirahmad2024area})}
    \label{th-deform-surf}
    Any regular cone-cylinder net~
    \eqref{eq:cone-cylinder}
    is deformable. For $\sigma(u)>0$, it is embedded into a one-parameter family
\begin{equation} \label{eq:cont-family}
\hspace*{-0.19cm}f^*(u,v,t):=
    a(\alpha)+\int_\alpha^u \frac{a'(w)\sigma(w)\,dw}{\sqrt{t+\sigma(w)^2}}
    +
    \sqrt{t+\sigma(u)^2}\,
    b(v),\ \ 
    t\in [0,1],\ 
    (u,v)\in U,
\end{equation} 
    of cone-cylinder nets which are related to each other by
    area-preserving Combescure transformations.
\end{prop}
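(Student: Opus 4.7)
The plan is a direct verification of formula~\eqref{eq:cont-family}, entirely parallel to the discrete argument of Proposition~\ref{p-cone-cylinder-deformation}. Regularity of the given cone-cylinder net forces $\sigma(u)\neq 0$ everywhere (else $f^*_v=\sigma(u)b'(v)$ would vanish), so $\sigma$ has constant sign on the connected interval $[\alpha,\beta]$. Changing the signs of both $\sigma$ and $b$ if needed, we may assume $\sigma(u)>0$, and it then suffices to check: \emph{(a)} each $f^*(\cdot,\cdot,t)$ defined by~\eqref{eq:cont-family} is a cone-cylinder net that specializes to the given one at $t=0$; \emph{(b)} the family consists of pairwise Combescure transforms; \emph{(c)} the area element is $t$-independent; and \emph{(d)} the nets $f^*(\cdot,\cdot,t)$ are pairwise non-congruent in both the Euclidean and isotropic senses.

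For \emph{(a)}, introduce
$$\hat a(u,t):=a(\alpha)+\int_\alpha^u \frac{a'(w)\sigma(w)}{\sqrt{t+\sigma(w)^2}}\,dw, \qquad \hat\sigma(u,t):=\sqrt{t+\sigma(u)^2},$$
so that $f^*(u,v,t)=\hat a(u,t)+\hat\sigma(u,t)\,b(v)$ has the form~\eqref{eq:cone-cylinder}. Using $\sigma>0$, one gets $\hat\sigma(u,0)=\sigma(u)$ and $\hat a(u,0)=a(\alpha)+\int_\alpha^u a'(w)\,dw=a(u)$, recovering the given net; the regularity condition $f^*_u\nparallel f^*_v$ persists for $t$ close to $0$ by continuity. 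For \emph{(b)} and \emph{(c)}, direct differentiation gives
$$f^*_u(u,v,t)=\lambda(u,t)\bigl(a'(u)+\sigma'(u)b(v)\bigr), \qquad f^*_v(u,v,t)=\lambda(u,t)^{-1}\,\sigma(u)\,b'(v),$$
where $\lambda(u,t):=\sigma(u)/\sqrt{t+\sigma(u)^2}$. These are pointwise scalar multiples of $f^*_u(u,v,0)$ and $f^*_v(u,v,0)$, establishing the Combescure property; the first fundamental form transforms as $E\mapsto\lambda^2 E$, $F\mapsto F$, $G\mapsto\lambda^{-2}G$, so $EG-F^2$ is $t$-independent, giving area preservation.

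Item \emph{(d)} is the only substantive step. Since a Combescure transform preserves tangent directions pointwise, any Euclidean or isotropic congruence $C$ carrying $f^*(\cdot,\cdot,t_1)$ to $f^*(\cdot,\cdot,t_2)$ must have trivial rotational part in the top view, and hence acts on the top view as a translation (with a possible additional linear shift in the $z$-coordinate in the isotropic case). But $\hat\sigma(u,t)^2=t+\sigma(u)^2$ varies genuinely with $t$, so the $v$-curves are rescaled by $t$-dependent factors, a distortion no translation can compensate; moreover, in the degenerate case of constant $\sigma$, the representation $\hat a(u,t)=a(\alpha)+\tfrac{\sigma}{\sqrt{t+\sigma^2}}(a(u)-a(\alpha))$ shows the $u$-curves also undergo a genuine $t$-dependent rescaling. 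This yields pairwise non-congruence, completing the deformability claim. I expect \emph{(d)} to be the principal (mild) obstacle; steps \emph{(a)}--\emph{(c)} are mechanical.
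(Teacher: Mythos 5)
The paper does not actually prove this proposition: it is imported verbatim from \cite[Proposition~15]{pirahmad2024area}, so there is no internal proof to compare against. Your direct verification is, in substance, a correct and self-contained replacement. Steps (a)--(c) are exactly right; in fact your computation $f^*_u(u,v,t)=\lambda(u,t)\,f^*_u(u,v,0)$, $f^*_v(u,v,t)=\lambda(u,t)^{-1}f^*_v(u,v,0)$ shows the regularity condition $f^*_u\nparallel f^*_v$ holds for \emph{all} $t\in[0,1]$ (the directions are literally unchanged), so the appeal to continuity near $t=0$ is weaker than what you have already established. The only place that needs tightening is (d), where the phrase ``trivial rotational part in the top view'' conflates the two kinds of congruence. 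Cleaner: if $C(\mathbf{x})=A\mathbf{x}+\mathbf{b}$ carries $f^*(\cdot,\cdot,t_1)$ to $f^*(\cdot,\cdot,t_2)$, then differentiating gives $A f^*_u(u,v,t_1)=\mu(u)\,f^*_u(u,v,t_1)$ and $A f^*_v(u,v,t_1)=\mu(u)^{-1}f^*_v(u,v,t_1)$ with $\mu(u)=\sqrt{(t_1+\sigma(u)^2)/(t_2+\sigma(u)^2)}$. For a Euclidean congruence, $A$ is orthogonal and preserves norms, so $\mu\equiv1$ and $t_1=t_2$. For an isotropic congruence, $A$ acts on top views as a rotation $R_\phi$, and $R_\phi\overline{w}=\mu\overline{w}$ with $\mu>0$ and $\overline{w}\neq0$ forces $\phi=0$, $\mu=1$; regularity guarantees that at each point at least one of $\overline{f^*_u},\overline{f^*_v}$ is nonzero (they cannot both be vertical), which is the degenerate case your argument leaves implicit. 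By contrast, your worry about constant $\sigma$ is a red herring: the $v$-direction rescaling factor $\sqrt{t+\sigma^2}/\sigma$ still depends on $t$ there, so no separate treatment is needed.
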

\fixskip


Analogously to the discrete case, we introduce the following class of 
nets: 
\emph{admissible conjugate nets with planar parameter lines, where one family of parameter lines lie in isotropic planes, and the Gaussian curvature vanishes nowhere}. We call them \emph{generalized T-surfaces} or \emph{generalized smooth T-nets}. Their properties are parallel to the ones of generalized T-nets discussed in Section~\ref{ssec:Finite:Geometric properties}: Generalized T-surfaces
are metric dual to smooth cone-cylinder nets. 
Generalized T-surfaces can be viewed as the isotropic counterparts and a generalization of T-surfaces, a known class of smooth flexible nets in Euclidean geometry~\cite[Definition~4.1]{izmestiev2024isometric}. As we shall see now, all generalized T-surfaces are flexible in $I^3$. 
As usual, denote $e_1=(1,0,0)^\mathrm{T}$, $e_2=(0,1,0)^\mathrm{T}$, $ e_3=(0,0,1)^\mathrm{T}$.

\fixskip
\begin{prop}
    \label{p-generalized-T-surface}
    Any generalized smooth \( T \)-net is flexible in \( I^3 \). Such a net has the form  
    \begin{equation}
        \label{T-net}
        f(u,v) = -\frac{1}{\det(e_3, a'(u) + \sigma'(u)b(v), b'(v))}
        \small\begin{pmatrix}
            \det(e_1, a'(u) + \sigma'(u)b(v), b'(v)) \\
            \det(e_2, a'(u) + \sigma'(u)b(v), b'(v)) \\
            \det(a(u) + \sigma(u)b(v), a'(u) + \sigma'(u)b(v), b'(v))
        \end{pmatrix},
    \end{equation}
    where \( (u, v) \in U\) 
    and 
    \( a \colon [\alpha, \beta] \to \mathbb{R}^3 \), \( b \colon [\gamma, \delta] \to \mathbb{R}^3 \), \( \sigma \colon [\alpha, \beta] \to (0, +\infty) \) are smooth functions such that $e_3,b''(v),a''(u)+\sigma''(u)b(v)\not\in span(a'(u)+\sigma'(u)b(v),b'(v))$. 
    
    It has a nontrivial isotropic isometric deformation 
    \small\begin{equation}
        \label{iso-deform}
        f(u,v,t) = -\frac{1}{\det(e_3, a'(u)+\sigma'(u)b(v), b'(v))} \begin{pmatrix}
        \det(e_1, a'(u)+\sigma'(u)b(v), b'(v)) \\
        \det(e_2, a'(u)+\sigma'(u)b(v), b'(v)) \\
        \det(f^*(u,v,t), a'(u)+\sigma'(u)b(v), b'(v))
        \end{pmatrix},
    \end{equation}
    where $f^*(u,v,t)$
    is given by~\eqref{eq:cont-family}.
\end{prop}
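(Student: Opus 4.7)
My plan is to mirror the discrete argument of Proposition~\ref{eq-generalized-T-net-flexion}, using Proposition~\ref{th-deform-surf-i} in place of Proposition~\ref{p-cone-cylinder-nets}. The first step will be to establish that the metric dual $f^*$ of a generalized smooth T-surface is a smooth cone-cylinder net. Under metric duality, a planar parameter line of $f$ corresponds to tangent lines of $f^*$ that are concurrent or parallel along each transverse parameter line, while the additional condition that one family of parameter lines of $f$ lies in isotropic planes corresponds to those tangent vectors being parallel. By Proposition~\ref{th-deform-surf-i} this will force $f^*(u,v) = a(u) + \sigma(u)\,b(v)$ after possibly swapping the roles of $u$ and $v$, and without loss of generality $\sigma(u) > 0$. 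Flexibility of $f$ is then immediate from Propositions~\ref{th-deform-surf} and~\ref{l-smooth-flexible-vs-deformable}, so the remaining task is to derive the explicit formulas~\eqref{T-net} and~\eqref{iso-deform}.

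Next I would derive~\eqref{T-net} as an explicit formula for $f = f^{**}$ by computing the metric dual of the tangent plane of $f^*$. This tangent plane passes through $f^*(u,v)$ with normal $N = f^*_u \times f^*_v = \sigma(u)\,(a'(u) + \sigma'(u) b(v)) \times b'(v)$. The hypothesis $e_3 \notin \mathrm{span}(a'(u) + \sigma'(u) b(v),\, b'(v))$ guarantees $N \cdot e_3 \neq 0$, so the plane is non-isotropic and its metric dual point is $\left(-\frac{N\cdot e_1}{N\cdot e_3},\, -\frac{N\cdot e_2}{N\cdot e_3},\, -\frac{N\cdot f^*(u,v)}{N\cdot e_3}\right)$. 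Expanding the dot products via the scalar triple product identity $(X\times Y)\cdot Z = \det(X,Y,Z)$ and canceling the common factor $\sigma(u)\ne 0$ in numerators and denominators will produce exactly~\eqref{T-net}.

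For the deformation I would invoke Proposition~\ref{th-deform-surf} to embed $f^*$ into the area-preserving Combescure family~\eqref{eq:cont-family} and set $f(u,v,t)$ to be the metric dual of $f^*(u,v,t)$. The key calculation is that $f^*_u(t)\times f^*_v(t) = \sigma(u)\,(a'(u)+\sigma'(u)b(v)) \times b'(v)$ is independent of $t$: differentiating~\eqref{eq:cont-family} gives $f^*_u(t) = \frac{\sigma(u)}{\sqrt{t+\sigma(u)^2}}\,(a'(u)+\sigma'(u)b(v))$ and $f^*_v(t) = \sqrt{t+\sigma(u)^2}\, b'(v)$, and the scalar factors cancel in the cross product. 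Applying the previous paragraph's computation with $f^*(u,v,t)$ in place of $f^*(u,v)$ will then produce~\eqref{iso-deform}, in which only the third coordinate depends on $t$. Consequently $f(u,v,t)$ and $f(u,v)$ share the same top view at each $(u,v)$. The pointwise isotropic Gaussian curvature will be preserved as well: by the Jacobian chain rule, $K(u,v,t) = \det(\overline{f^*_u(t)}, \overline{f^*_v(t)})/\det(\overline{f_u(t)}, \overline{f_v(t)})$, whose numerator is the $e_3$-component of the $t$-independent $f^*_u(t)\times f^*_v(t)$ and whose denominator depends only on the $t$-independent top view of $f(u,v,t)$. Non-triviality of the deformation will follow from the pairwise non-congruence of the $f^*(u,v,t)$ guaranteed by Proposition~\ref{th-deform-surf}, since metric duality carries isotropic congruences to isotropic congruences.

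The hardest step will be the first: rigorously justifying the metric-duality translation from ``planar'' and ``isotropic-planar'' parameter lines of $f$ into the tangential conditions of Proposition~\ref{th-deform-surf-i} on $f^*$. Its discrete counterpart is worked out in the proof of Proposition~\ref{eq-generalized-T-net-flexion} via Proposition~\ref{p-cone-cylinder-nets}; the smooth version will require replacing discrete edges by tangent vectors and carefully handling the isotropic direction at infinity under the polarity. The regularity hypotheses on $e_3$, $b''(v)$, and $a''(u) + \sigma''(u)b(v)$ in the statement are exactly what will make this translation globally well-defined on $U$ and keep $f^*$ an admissible conjugate net with nonvanishing Gaussian curvature throughout the deformation.
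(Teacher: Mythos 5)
Your proposal is correct and follows essentially the same route as the paper: identify $f^*$ as a cone-cylinder net via the conjugate-direction/duality argument and Proposition~\ref{th-deform-surf-i}, get flexibility from Propositions~\ref{th-deform-surf} and~\ref{l-smooth-flexible-vs-deformable}, and obtain~\eqref{T-net} and~\eqref{iso-deform} by substituting into the dual-point formula and canceling $\sigma(u)$. The only cosmetic difference is that you verify the isometric-deformation conditions (shared top view, preserved curvature, non-triviality) by hand, whereas the paper packages exactly that argument as Lemma~\ref{l-isomtric-deformation-vs-deformation} and cites it.
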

\fixskip

The same expression can be compactly written in the homogeneous coordinates in terms of the \emph{triple cross product} in \( \mathbb{R}^4 \): 
$$
F(u,v)=(F_1(u,v),\dots, F_4(u,v))=F^*(u,v)\times F^*_u(u,v)\times F^*_v(u,v),
$$
where $F^*(u,v):=(-f^*_1, -f^*_2, 1, f^*_3)$ are the homogeneous coordinates of the plane, metric dual to a point \( (f^*_1, f^*_2, f^*_3):= a(u) + \sigma(u)b(v)  \) of the metric dual net, so that 
$$f(u,v)=\left(F_1(u,v),F_2(u,v),F_3(u,v)\right)/F_4(u,v).$$

\subsection{Proof of the results}

Proposition~\ref{l-smooth-flexible-vs-deformable} follows 
from the following stronger assertion.

\fixskip
\begin{lemma}\label{l-isomtric-deformation-vs-deformation}
   A family of 
   nets \( f(u, v, t) \) is a nontrivial isotropic isometric deformation of an admissible conjugate net 
   $f(u, v)$ with nowhere vanishing isotropic Gaussian curvature if and only if 
   the metric duals \( f^*(u, v, t) \) 
   are area-preserving Combescure transforms of the metric dual~$f^*(u, v)=f^*(u, v, 0)$, not all being congruent. 
\end{lemma}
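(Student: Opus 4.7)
The plan is to exploit the involutive nature of metric duality in $I^3$: the tangent plane to $f^*$ at $f^*(u,v)$ equals the plane metric-dual to the point $f(u,v)$. This follows from the involutivity of point/plane duality together with the observation that $f(u,v)$ lies on its own tangent plane. Under this identity, each structural condition in Definition~\ref{def-isom-surf} translates directly into a structural condition on $f^*$, reducing the lemma to three book-keeping steps.

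First, I would establish that two admissible conjugate nets $f,g$ share the same top view if and only if their metric duals $f^*,g^*$ are Combescure transforms. Indeed, the points $f(u,v)$ and $g(u,v)$ have the same top view exactly when their metric-dual planes have equal slope coefficients, i.e., are parallel; by the involutive identity these are the tangent planes of $f^*$ and $g^*$ at corresponding parameters, and parallelism of those tangent planes is equivalent to the Combescure relation between $f^*$ and $g^*$.

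Second, I would convert ``same isotropic Gaussian curvature'' into ``area preservation.'' Writing the surface locally as a graph $z=z(x,y)$ and noting that $(f^*_1,f^*_2)=(z_x,z_y)$ yields the Jacobian identity $\det(\overline{f^*_u},\overline{f^*_v}) = K\cdot\det(\overline{f_u},\overline{f_v})$. With the top view of $f$ fixed, pointwise equality of $K$ is equivalent to equality of the $z$-components of $f^*_u\times f^*_v$ and $f^*_u(t)\times f^*_v(t)$. But step one makes these two vectors parallel, and admissibility of $f^*$ (guaranteed by $K\ne 0$) makes their common direction non-horizontal; hence equal $z$-components force the full cross products to coincide, whence equal Euclidean norms, i.e., area preservation. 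For the reverse direction, equal Euclidean norms of parallel vectors allow only a sign ambiguity, which continuity at $t=0$ resolves.

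Third, I would handle the (non)triviality clause. If $f(u,v,t)=C_tf(u,v)$ for an isotropic congruence $C_t$, the same-top-view condition forces $C_t$ to have trivial rotation angle and trivial horizontal translation, so $C_t$ has the shear-plus-vertical-translation form $(x,y,z)\mapsto(x,y,c_{1,t}x+c_{2,t}y+z+b_{3,t})$; a direct computation on the image of a tangent plane yields $f^*(u,v,t)=f^*(u,v)+(c_{1,t},c_{2,t},-b_{3,t})$, so all $f^*(\cdot,\cdot,t)$ are pairwise translates and in particular congruent. Conversely, a Euclidean or isotropic congruence between two Combescure-related nets must preserve parallelism of corresponding partials pointwise, which forces the linear part to be trivial and the congruence to be a pure translation; unwinding through metric duality recovers a $C_t$ of the above form, i.e., a trivial deformation. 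The main obstacle I anticipate is precisely this third step, where one must correctly match isotropic congruences of $f$ with the transformations they induce on $f^*$; the other two steps reduce cleanly to the involutive identity plus a single Jacobian computation.
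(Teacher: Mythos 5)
Your argument is correct and follows the same overall decomposition as the paper's proof: you match the three clauses of Definition~\ref{def-isom-surf} (same top view, same curvature, nontriviality) one-by-one with the three clauses on the dual side (Combescure, area-preserving, not all congruent) via the involutivity of the metric duality, and your triviality computation (the congruence $(x,y,z)\mapsto(x,y,z+c_1x+c_2y+b_3)$ dualizing to the translation by $(c_1,c_2,-b_3)$) is exactly the paper's. The one place you genuinely diverge is the curvature--area step: the paper invokes the identity ``total isotropic Gaussian curvature of a neighborhood equals the oriented isotropic area of its metric dual'' (already set up in the preliminaries via local injectivity of the Gauss map), whereas you argue pointwise through $\det(\overline{f^*_u},\overline{f^*_v})=K\det(\overline{f_u},\overline{f_v})$ and the fact that parallel normals with equal nonzero $z$-components must coincide; your version is more elementary and self-contained, the paper's reuses machinery it needs anyway. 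Two points you gloss over that the paper handles explicitly and that you should add: (1) before dualizing the deformed nets one must know that $f(\cdot,\cdot,t)$ and $f^*(\cdot,\cdot,t)$ remain admissible conjugate nets with nonvanishing curvature for all $t$ (the paper gets this from the observation that both isotropic isometric deformations and area-preserving Combescure transforms preserve the isotropic Gaussian curvature); (2) your claimed equivalence ``parallel tangent planes at corresponding points $\Leftrightarrow$ Combescure'' holds only because both duals are conjugate nets in the \emph{same} parameters with nonvanishing curvature --- writing $g^*_u=af^*_u+bf^*_v$, $g^*_v=cf^*_u+df^*_v$ and comparing normal components of the mixed second derivatives is what kills $b$ and $c$; without the conjugate-net hypothesis parallel tangent planes do not force parallelism of the individual partials. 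Neither point invalidates your argument, but both are needed for it to be complete.
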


\begin{proof}[Proof of Lemma~\ref{l-isomtric-deformation-vs-deformation}]
   The metric dual \( f^*(u, v) \) is also an admissible conjugate net with nowhere 
   vanishing isotropic Gaussian curvature (by 
   the equivalent definitions of conjugate directions discussed in \cite[Section~60]{K91} and the relation between the isotropic Gaussian curvatures of metric dual nets discussed in~\cite[Lemma~14]{isometric-isotropic} and~\cite{strubecker-dual}.
   ) Both isotropic isometric deformations and area-preserving Combescure transforms preserve the isotropic Gaussian curvature \cite[Remark~6]{pirahmad2024area}, thus we may assume that 
   both $f(u,v,t)$ and \( f^*(u, v, t) \) also have nonvanishing isotropic Gaussian curvature. 
   Then one of the nets $f(u,v,t)$ and \( f^*(u, v, t) \)
   is an admissible conjugate net if and only if the other one is.
   In what follows assume that both are.

   Consider the two conditions on the points $f(u,v,t)$ and $f(u,v)$ in Definition~\ref{def-isom-surf}.
   
   Two points $f(u,v,t)$ and $f(u,v)$ are parallel for each $(u,v,t)\in U\times [0,1]$, if and only if the tangent planes at \( f^*(u, v, t) \) and $f^*(u, v)$ are parallel. The latter is equivalent 
   to the parallelism of the conjugate nets \( f^*(u, v, t) \) and $f^*(u, v)$, because their isotropic Gaussian curvature vanishes nowhere.
   
   Further, since the net $f(u,v,t)$ has nowhere vanishing isotropic Gaussian curvature \( K(u,v,t) \), it has a locally injective isotropic Gauss map. As noted in the introduction, then the total isotropic Gaussian curvature \( \int_N K(u,v,t) \, dudv \) of a small enough neighborhood $N\subset U$ equals the oriented isotropic area of the dual surface $f^*(N\times \{t\})$. Thus the isotropic Gaussian curvature at two parallel points $f(u,v,t)$ and $f(u,v)$ is equal if and only if \( f^*(u,v,t) \) is an \emph{isotropic}-area-preserving Combescure transform of \( f^*(u,v) \). Since Combescure transforms have parallel tangent planes at corresponding points, this is equivalent to being area-preserving. 

   Finally, if for some $t$, the admissible nets $f(u,v,t)$ and $f(u,v)$ are related by an isotropic congruence, then the latter preserves the top view and thus has form $(x,y,z)\mapsto \left(x,y,z+c_1x+c_2y-c_3\right)$ for some $c_1,c_2,c_3\in \mathbb{R}$. Then $f^*(u,v,t)$ and $f^*(u,v)$ are related by the translation $(x,y,z)\mapsto \left(x+c_1,y+c_2,z+c_3\right)$. Conversely, if $f^*(u,v,t)$ and $f^*(u,v)$ are related by an isotropic (or Euclidean) congruence $C$, then $f(u,v,t)$ and $f(u,v)$ are, because $C$ can only be a translation.
\end{proof}

\begin{proof}[Proof of Proposition~\ref{p-generalized-T-surface}]
Consider a generalized smooth T-net $f(u,v)$ and its metric dual $f^*(u,v)$. The metric duality takes a parameter line $u=\mathrm{const}$ of $f(u,v)$ to the set of tangent planes along a parameter line $u=\mathrm{const}$ of $f^*(u,v)$.
If the former parameter line is planar, then the tangent planes have a common point (an improper one if the parameter line lies in an isotropic plane). By the equivalent definition of conjugate directions in \cite[Section~60]{K91}, the tangents to the parameter lines $v=\mathrm{const}$ at the points of the parameter line $u=\mathrm{const}$ of $f^*(u,v)$ pass through this common point. By Propositions~\ref{th-deform-surf-i} and~\ref{th-deform-surf}, the net $f^*(u,v)$ has form~\eqref{eq:cone-cylinder} and is deformable.  By Proposition~\ref{l-smooth-flexible-vs-deformable}, the net $f(u,v)$ is flexible in \( I^3 \).

Since $f(u,v)$ is an admissible conjugate net with nowhere vanishing isotropic Gaussian curvature, $f^*(u,v)$ is. Thus \( \det(e_3,f^*_u, f^*_v) \neq 0 \).
By~\cite[Equation~(5.1)]{strubecker3},
the isotropic Gaussian curvature of $f^*(u,v)$ is given by
\begin{equation}
    \label{form-is-gauss}
K=\frac{\det(f^*_u,f^*_v,f^*_{uu})\det(f^*_u,f^*_v,f^*_{vv})-\det(f^*_u,f^*_v,f^*_{uv})^2}{\left(\overline{{f}^*_u}\cdot \overline{{f}^*_u}\right)\left(\overline{{f}^*_v}\cdot \overline{{f}^*_v}\right)-\left(\overline{{f}^*_u}\cdot \overline{{f}^*_v}\right)^2}.
\end{equation}
Thus $\det(f^*_u, f^*_v, f^*_{uu})\det(f^*_u, f^*_v, f^*_{vv}) \neq 0.$
So, $f(u,v)$ is a generalized smooth T-net if and only if $f^*(u,v)$ has form~\eqref{eq:cone-cylinder} with $a'(u) + \sigma'(u)b(v)\nparallel b'(v)$ and
    \[
    e_3, b''(v), a''(u) + \sigma''(u)b(v) \notin \mathrm{span}(a'(u) + \sigma'(u)b(v), b'(v)).
    \]

By a straightforward computation, we have (see e.g.,~\cite[Equation~(53)]{Yorov-etal})
\begin{equation}
    \label{eq-det-dual}
    f(u,v) = -\frac{1}{\det(e_3,f^*_u,f^*_v)} 
        \begin{pmatrix}
            \det(e_1,f^*_u,f^*_v)\\
            \det(e_2,f^*_u,f^*_v)\\
            \det(f^*,f^*_u,f^*_v)
        \end{pmatrix}.  
\end{equation}
Substituting~\eqref{eq:cone-cylinder} and~\eqref{eq:cont-family} into~\eqref{eq-det-dual} and canceling \( \sigma(u)\ne 0 \), we obtain the desired formulae~\eqref{T-net} and~\eqref{iso-deform}. The latter is a nontrivial isotropic isometric deformation by Proposition~\ref{th-deform-surf} and Lemma~\ref{l-isomtric-deformation-vs-deformation}.
\end{proof}

\section{Conclusion and open problems} 
\label{sec:concl-open-pr}

In the previous sections, we classified flexible and infinitesimally flexible $m\times n$ nets in~$I^3$, and constructed examples of smooth flexible nets in $I^3$. The infinitesimally flexible nets are essentially the same in isotropic and Euclidean geometry. The classification of flexible $m\times n$ nets in $I^3$ is much simpler: we get just two classes with visual geometric descriptions, in sharp contrast to the Euclidean case. An example of a flexible net in $I^3$ from the first class  (``flower'') is shown in Figure~\ref{fig:flexible-eucl} to the top. 

Using the resulting isotropic flexible net to initialize a numerical optimization, we produce an approximate Euclidean flexible net 
in Figure~\ref{fig:flexible-eucl} to the bottom. Surprisingly, the optimization leads to a very small (although \emph{not} neglectable) shape change to achieve Euclidean flexibility to a very high precision (enough for any practical applications). So, the isotropic flexible net seems close to some Euclidean one. The discussion of the used optimization algorithm goes beyond the scope of the present paper and is going to be presented in a subsequent publication~\cite{jiang-2025} based on earlier work~\cite{quadmech-2024}. 



Let us list a few open questions.
First, it would be interesting to drop the convexity assumptions made throughout this work. For a general Q-net, the definition of flexibility in $I^3$ is literally the same, as long as the planes of the faces are not isotropic. We expect the classification theorem (Theorem~\ref{th-flexible}) to remain the same under a minor restatement of condition~(i). However, the proof will require additional steps because the convexity was essentially used in the previous work \cite{pirahmad2024area} we rely on. 

\fixskip
\begin{problem}[Non-dual-convex nets] Characterize all Q-nets  that are flexible in $I^3$.
\end{problem}
\fixskip

In particular, it is interesting to characterize isotropic origami. 

\fixskip
\begin{problem}[Isotropic Origami]
Characterize all planar nets  that are flexible in $I^3$.
\end{problem} 
\fixskip

Among nets with planar faces, the ones with faces inscribed in circles are of special interest. These \emph{circular nets} are discrete analogs of principal nets. \emph{Isotropic circular nets} are Q-nets with the faces inscribed in isotropic circles~\cite{pottmann2007discrete}.  \emph{Isotropic conical nets} are defined analogously to conical nets~\cite{pottmann2007discrete}.

\fixskip
\begin{problem}[Flexible isotropic circular nets] What flexible nets in $I^3$ are isotropic circular? conical?
\end{problem}
\fixskip


One can easily adopt the definition of flexibility in $I^3$
beyond square-grid combinatorics, in particular, introducing \emph{flexible polyhedra} in $I^3$; cf.~\cite{Gaifullin-25}. They are closed polyhedral surfaces whose metric duals admit continuous families of noncongruent area-preserving Combescure transformations. The latter is not possible for a polyhedron with a convex metric dual, by Minkowski's theorem on the existence and uniqueness of a convex polyhedron with given directions and areas of faces.
This implies \emph{isotropic 
Cauchy's rigidity 
theorem}: a polyhedron whose metric dual is convex is not flexible in~$I^3$ \cite[Section~1.1]{pirahmad2024area}. 

\fixskip
\begin{problem}[Flexible polyhedra] Construct examples of flexible polyhedra in $I^3$. Characterize all flexible octahedra in $I^3$.
\end{problem}
\fixskip

The famous bellows conjecture (Sabitov's theorem) asserts that the volume of a flexible polyhedron is preserved under a Euclidean isometric deformation. In other words, one cannot make bellows from a flexible polyhedron (with rigid faces). In Euclidean geometry, this was proved by I.~Sabitov in the 1990s. The same was proved in hyperbolic geometry by A.~Gaifullin~\cite{Gaifullin-15} and disproved in spherical geometry by V.~Alexandrov~\cite{Alexandrov-97, Gaifullin-25}.

\fixskip
\begin{problem}[Isotropic bellows conjecture]
    Is the volume of a flexible polyhedron in $I^3$ preserved under an isotropic isometric deformation?
\end{problem}
\fixskip

In this paper, we focused on \emph{one-sided flexibility}, meaning that the family parameter $t$ assumed values in $[0,1]$ with $t=0$ corresponding to the original net. See Definitions~\ref{def-flexibility} and~\ref{def-isom-surf}. \emph{Two-sided flexibility} is defined analogously, only the condition $t\in [0;1]$ is replaced with $t\in [-1;1]$ and we require all the nets in a \emph{nontrivial two-sided isometric deformation} to be pairwise non-congruent. This applies to smooth and discrete setups, as well as Euclidean and isotropic geometry. See~\cite{izmestiev2024isometric}.

\fixskip
\begin{problem}[One- vs two-sided flexibility] Prove that one-sided flexibility implies a two-sided one in isotropic geometry, and does not imply a two-sided one in Euclidean geometry.
\end{problem}
\fixskip

As observed in Section~\ref{ssec:Finite:Geometric properties}, any T-net, after an appropriate Euclidean rotation, is flexible in both isotropic and Euclidean geometries. 

\fixskip
\begin{problem}[Both isotropic and Euclidean flexibility]
    Are T-nets the only Q-nets that are flexible in both isotropic and Euclidean geometries?
\end{problem}
\fixskip

Yet another promising direction is the second-order infinitesimal flexibility, both in the smooth and discrete setup. In the Euclidean setting, Schief et al.~\cite{Schief2008} characterized it in terms of so-called Bianchi surfaces and their discrete counterparts.

\fixskip
\begin{problem}[Second-order isotropic flexibility]
What nets are second-order infinitesimally flexible in $I^3$? Is there a characterization similar to the Euclidean one? Are all second-order infinitesimally flexible smooth conjugate nets in $I^3$ finitely flexible in $I^3$?
\end{problem} 
\fixskip

Another interesting problem is the characterization of all deformable smooth nets, which is equivalent to characterizing all flexible smooth nets in \(I^3\) via the metric duality.
Recall that a \emph{reparametrization} of a smooth net $f(u,v)$ is a net $F(U,V)$ such that $F(U(u),V(v))=f(u,v)$ for a pair of smooth functions $U(u)$ and $V(v)$ with nowhere vanishing derivatives. Set $\int_{a}^b h(z)\,dz:=-\int _{b}^a h(z)\,dz$ for $b<a$.

\fixskip
\begin{conjecture}[Classification of smooth deformable nets] \label{conj-smooth-deformable}
    A net \( f(u,v) \) is a deformable conjugate net if and only if in a sufficiently small neighborhood of any generic point $(u_0,v_0)$,
    the net \( f(u,v) \) is scale-translational 
    or 
    has the following form up to translation and reparametrization:
    \begin{equation}
        \label{deform-net-smooth}
        f(u,v) = \sqrt{uv} \underbrace{\left[\int_0^1 \frac{\Phi\left(u + (v - u) z \right)}{\sqrt{z - z^2}} \,dz + \int_0^1 \frac{\Psi\left(u + (v - u) z \right)\log\left(z(1 - z)(v - u)\right)}{\sqrt{z - z^2}} \,dz\right]}_{h(u,v)},
    \end{equation}
    where \( \Phi \) and \( \Psi \) are arbitrary smooth 
    vector-functions, possibly depending on~$(u_0,v_0)$.

    Moreover, net~\eqref{deform-net-smooth} belongs to the following continuous family of pairwise non-congruent area-preserving Combescure transformations in a neighborhood of~$(u_0,v_0)$:
    $$f(u,v,t)=h(u,v)\sqrt{(t+u)(t+v)}-t\int_{u_0}^u \sqrt{\frac{t+v}{t+\bar u}}h_u(\bar u,v)\,d\bar u-t\int_{v_0}^v \sqrt{\frac{t+u_0}{t+\bar v}}h_v(u_0,\bar v)\,d\bar v.$$
\end{conjecture}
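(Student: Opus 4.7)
The plan is to reduce deformability of a smooth conjugate net to a scalar PDE on a Combescure multiplier, and then classify the compatibility conditions on the Laplace invariants of the conjugate-net equation.

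First, I would parametrize Combescure transforms. Any Combescure transform $f^+$ of an admissible conjugate net $f$ satisfies $f^+_u=\lambda f_u$ and $f^+_v=\mu f_v$ for scalar functions $\lambda,\mu$, and the area-preservation condition $(f^+_u\cdot f^+_u)(f^+_v\cdot f^+_v)-(f^+_u\cdot f^+_v)^2=(f_u\cdot f_u)(f_v\cdot f_v)-(f_u\cdot f_v)^2$ reduces to $\lambda^2\mu^2=1$; continuity from the trivial transform at $t=0$ yields $\mu=1/\lambda$. Cross-differentiation $(f^+_u)_v=(f^+_v)_u$ combined with the conjugate-net equation $f_{uv}=\alpha f_u+\beta f_v$ gives
\begin{equation*}
\lambda_u=\lambda(1-\lambda^2)\beta,\qquad \lambda_v=\lambda^{-1}(1-\lambda^2)\alpha.
\end{equation*}
A continuous family of non-congruent area-preserving Combescure transforms therefore corresponds to a one-parameter family $\lambda(u,v,t)$ of non-constant solutions of this system.

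Second, I would impose the integrability condition $\lambda_{uv}=\lambda_{vu}$, which yields an algebraic equation in $\lambda$ whose coefficients involve $\alpha,\beta$ and their first derivatives; requiring it to hold for an entire one-parameter family of $\lambda$ forces each coefficient to vanish, producing coupled PDEs purely on $(\alpha,\beta)$. A dichotomy then arises: either $\alpha\equiv 0$ or $\beta\equiv 0$, which by Proposition~\ref{th-deform-surf-i} corresponds to scale-translational nets, or both invariants are nonzero. In the second case, after an allowed separable reparametrization $(u,v)\mapsto(U(u),V(v))$ of the parameter lines, I expect the compatibility to reduce to an Euler--Poisson--Darboux canonical form $h_{uv}+\frac{1}{2(v-u)}(h_u-h_v)=0$ for $h$ defined by $f=\sqrt{uv}\,h(u,v)$. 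Riemann's integral representation of the general solution of this equation coincides with the bracketed expression in~\eqref{deform-net-smooth}, with $\Phi$ and $\Psi$ supplying two families of Goursat data along the characteristics; the logarithmic kernel arises as the second, resonant solution in the Frobenius analysis at the singular diagonal $u=v$. The explicit family $f(u,v,t)$ is then recovered by solving the $\lambda$-system with a seed value on $\{u=u_0\}\cup\{v=v_0\}$ parametrized by $t$: direct substitution should confirm that the stated formula is a Combescure transform with $\lambda\mu=1$, and the two single integrals over $[u_0,u]$ and $[v_0,v]$ are precisely the correction terms that restore matching to $f$ at $t=0$ along the seed curves.

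The main obstacle is the rigorous derivation of the canonical Euler--Poisson--Darboux PDE from the compatibility conditions on $(\alpha,\beta)$ and the proof that~\eqref{deform-net-smooth} exhausts all local solutions modulo translation and separable reparametrization. The weight $(z-z^2)^{-1/2}$ and the logarithmic correction strongly indicate the EPD structure, but exceptional reductions of the compatibility system will need to be ruled out, likely via a Painlev\'e-type analysis; one should also handle the endpoint behavior of the singular integrals near $u=v$, where the Frobenius split into $\Phi$ and $\Psi$ is forced by the indicial equation. A useful sanity check is that the discrete classification (Theorem~\ref{th-mxn}) admits exactly two classes and that class~(i$^*$) limits to scale-translational surfaces, while class~(ii$^*$), by Proposition~\ref{th-christoffel}, is characterized by the existence of an equiareal Christoffel dual, a condition that should manifest in the smooth limit as precisely the EPD equation above.
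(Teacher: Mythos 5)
The first thing to note is that the statement you are proving is labelled a \emph{Conjecture} in the paper: the authors explicitly pose the classification of smooth deformable nets as an open problem (remarking that it is more challenging than the discrete one) and supply no proof, so there is no argument of theirs to compare yours against. Your opening reduction is nevertheless correct and is surely the intended starting point: writing $f^+_u=\lambda f_u$, $f^+_v=\mu f_v$, area preservation gives $\lambda\mu=1$ by continuity from $t=0$, and cross-differentiation against $f_{uv}=\alpha f_u+\beta f_v$ yields exactly your system for $\lambda$. Its integrability condition, after dividing by $\lambda^{-1}(1-\lambda^2)$, reads $\alpha_u-2\alpha\beta=\lambda^2(\beta_v-2\alpha\beta)$; since a nontrivial continuous family forces $\lambda^2$ to take a continuum of values at each point (the scalar first-order system propagates distinct initial values injectively, and a family with constant $\lambda$ at one point is a family of translations), both sides must vanish, giving $\alpha_u=\beta_v=2\alpha\beta$. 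Moreover, the step you only state as an expectation is within reach: $\alpha_u=\beta_v$ gives a potential $\phi$ with $\phi_u=\beta$, $\phi_v=\alpha$; the condition $\phi_{uv}=2\phi_u\phi_v$ linearizes as $(e^{-2\phi})_{uv}=0$, so $e^{-2\phi}=U(u)+V(v)$, and the separable reparametrization $(u,v)\mapsto(U(u),\pm V(v))$ puts the conjugate-net equation into Euler--Poisson--Darboux form with parameters $(1/2,1/2)$, consistent with the kernel $(z-z^2)^{-1/2}$ in~\eqref{deform-net-smooth}.

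The genuine gaps are precisely the ones you defer. First, you must prove that \emph{every} smooth solution of that singular hyperbolic equation near a generic point is of the form~\eqref{deform-net-smooth} with smooth $\Phi,\Psi$: the equation degenerates on the diagonal $u=v$, and showing that the resonant logarithmic branch together with the Poisson integral exhausts the local solution space there (and that arbitrary Goursat data off the diagonal can be matched to a pair $(\Phi,\Psi)$) is the heart of the matter, not a routine Frobenius remark. Second, the explicit family $f(u,v,t)$ must be verified by substitution to be an area-preserving Combescure family with pairwise non-congruent members; ``should confirm'' is not a proof, and the ``if'' direction of the equivalence rests on it. Third, your closing sanity check points the wrong way: the paper states that the smooth limits of the discrete deformable classes are \emph{only} scale-translational nets and that the smooth analogues of class~(ii) are just translational nets --- the authors call the conjecture ``paradoxical'' for exactly this reason --- so the equiareal-Christoffel-dual characterization of class~(ii$^*$) in Proposition~\ref{th-christoffel} will not hand you the Euler--Poisson--Darboux class as a limit. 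In short, your scheme is the natural attack and its first half can be completed along the lines above, but as written it is an outline of an approach to an open conjecture rather than a proof.
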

\fixskip


This conjecture is paradoxical: the variety of smooth deformable nets is much larger than the smooth limits of discrete deformable nets (those limits are just scale-translational nets). 

It may be possible to identify a deeper underlying reason for the existence of only two classes of flexible \(3 \times 3\) nets in \(I^3\), whereas the Euclidean counterpart, as classified by Izmestiev \cite{izmestiev2017classification}, includes a significantly larger variety.

The idea of initializing a numerical optimization algorithm
for the computation of a Euclidean structure by an isotropic
counterpart goes beyond the context of flexibility
and deserves to be investigated for other difficult problems as well. For instance, see \cite{MOROZOV2021, Yorov-etal,jiang-2025}. Further, one can try to apply this idea to non-Euclidean structures as well; cf.~\cite{Schlenker-02}.

\subsubsection*{Acknowledgements}

The authors are grateful to KAUST baseline funding for supporting this research work and to Caigui Jiang for the images in Figure~\ref{fig:flexible-eucl}. Those in the bottom row are based on his implementation of an optimization algorithm for the computation
of Euclidean flexible nets \cite{quadmech-2024}. 
We thank Alexander Gaifullin, Ivan Izmestiev, and Christian M\"uller for useful discussions.

\bibliography{sn-bibliography}

\notforarxiv

\section*{Statements \& Declarations}

\textbf{Funding.} This research has been supported by KAUST baseline funding. 
\vspace{0.2cm}\\
\textbf{Data availability.}  Data sharing is not applicable to this article as no datasets were generated or analyzed during the current study.
\vspace{0.2cm}\\
\textbf{Conflict of interest.} The authors have no relevant financial or non-financial interests to disclose.
\vspace{0.2cm}\\
\textbf{Author contributions.} All authors contributed equally to this work. 

\endnotforarxiv

\end{document}